\newcommand{\g}{\mathfrak{g}}
\newcommand{\mfk}{\mathfrak{k}}
\newcommand{\mft}{\mathfrak{t}}
\newcommand{\mfb}{\mathfrak{b}}
\newcommand{\mfn}{\mathfrak{n}}
\newcommand{\mfu}{\mathfrak{u}}
\newcommand{\mfl}{\mathfrak{l}}
\newcommand{\mfs}{\mathfrak{s}}
\newcommand{\bfs}{\mathbf{s}}
\newcommand{\bbC}{\mathbb{C}}
\newcommand{\bbR}{\mathbb{R}}
\newcommand{\cO}{\mathcal{O}}
\newcommand{\Id}{\mathrm{Id}}
\newcommand{\sB}{\mathscr{B}}
\newcommand{\sP}{\mathscr{P}}
\newcommand{\sV}{\mathscr{V}}
\newcommand{\bbP}{\mathbb{P}}
\newcommand{\BGB}{{B \backslash G / B}}
\newcommand{\PGB}{{P \backslash G / B}}
\newcommand{\KGB}{{K \backslash G / B}}
\newcommand{\KGP}{{K \backslash G / P}}
\newcommand{\HGB}{{H \backslash G / B}}
\newcommand{\alphato}{\overset{\alpha}{\mapsto}}
\newcommand{\alphafrom}{\overset{\alpha}{\mapsfrom}}
\newcommand{\Mv}{{}^{v_0}M}
\newcommand{\Mg}{{}^{g}M}
\DeclareMathOperator{\Int}{\mathrm{int}}
\DeclareMathOperator{\Aut}{\mathrm{Aut}}
\DeclareMathOperator{\Ad}{\mathrm{Ad}}
\newtheorem{theorem}{Theorem}[section]
\newtheorem{proposition}[theorem]{Proposition}
\newtheorem{lemma}[theorem]{Lemma}
\newtheorem{corollary}[theorem]{Corollary}
\theoremstyle{definition}
\newtheorem{definition}[theorem]{Definition}
\newtheorem{notation}[theorem]{Notation}
\theoremstyle{remark}
\newtheorem{remark}[theorem]{Remark}
\subjclass{Primary 22E50, Secondary 05E99}
\begin{document}

\title{Simplifying and Unifying Bruhat Order for $\BGB$, $\PGB$, $\KGB$, and 
$\KGP$}

\begin{abstract}
This paper provides a unifying and simplifying approach to Bruhat order in 
which the usual 
Bruhat order, parabolic Bruhat order, and Bruhat order for symmetric 
pairs are shown to have combinatorially analogous and relatively 
simple descriptions.  Such analogies are valuable as they permit the study of 
$\PGB$ and $\KGB$ by reducing to $\BGB$ rather than by introducing additional  
machinery.  A concise definition for reduced expressions and a simple proof of 
the exchange condition for $\PGB$ are provided as applications of this 
philosophy.  A geometric argument for spherical subgroups, which includes 
all of the cases considered, shows 
that Bruhat order has property $Z$ and therefore satisfies the subexpression 
property.  Thus, Bruhat order can be described using only simple relations, and 
it is the simple relations which we simplify combinatorially.
A parametrization of $\KGP$ is a simple consequence of 
understanding the Bruhat order of $\KGB$ restricted to a $P$-orbit.
In $\PGB$, if $x, y \in W_G$ are maximal length $W_L \backslash W_G$ coset 
representatives, then $W_L x \leq W_L y \iff x \leq y$.  Similarly, viewing 
$KuP$ and $KvP$ as unions of $B$ orbits for which $KuB$ and $KvB$ have maximal 
dimension, then $KuP \preceq KvP \iff KuB \preceq KvB$. $\KGP$ is in order 
preserving bijection with the $P$-maximal elements of $\KGB$ in the same way 
that $\PGB$ is in order preserving bijection with the $P$-maximal elements of 
$\BGB$.
\end{abstract}

\author{Wai~Ling~Yee}
\address{Department of Mathematics and Statistics \\ 
University of Windsor \\
Windsor, Ontario \\
CANADA}
\email{wlyee@uwindsor.ca}
\thanks{The author is grateful for the support from a Discovery 
Grant and UFA from NSERC, NSF grants DMS-0554278 and DMS-0968275, and the 
hospitality of the American Institute of Mathematics.}
\dedicatory{This paper is dedicated to the memory of Angela Sodan.}

\maketitle

\section{Introduction}
Bruhat order is an important tool in many branches of representation 
theory, in part because of the importance of studying orbits on the flag 
variety.  Category $\cO$ and the category of Harish-Chandra modules (see 
\cite{K} p. 375) are two 
categories for which representations are related to orbits on the flag variety.
Let $G$ be a complex reductive linear algebraic group with Lie algebra $\g$, 
$\theta$ a Cartan involution of $G$ (specifying a real form), $K = G^\theta$, 
$B$ a $\theta$-stable Borel subgroup, and $P$ a standard parabolic subgroup 
containing $B$.  Using Beilinson-Bernstein's 
geometric construction, irreducible representations in 
Category $\cO$ of trivial infinitesimal character are known to be 
in correspondence with $B$-orbits on the flag variety while irreducible 
Harish-Chandra modules of trivial infinitesimal character are in 
bijection with $K$-equivariant local systems on $K$-orbits on the flag variety.
The module constructions may be modified suitably to produce modules for other 
infinitesimal 
characters.

Multiplicities of irreducible composition factors in standard modules 
for each of these categories can be computed using 
Kazhdan-Lusztig-Vogan polynomials.  Finding 
efficient means of computing such polynomials is a heavily studied problem.  
Since the recursion formulas for computing Kazhdan-Lusztig-Vogan polynomials 
are expressed in terms of the Bruhat order on orbits and on local systems, we 
hope that the simplifications to Bruhat order for $\PGB$ and for $\KGB$ 
contained in this paper may lead to a deeper understanding of the 
Kazdhan-Lusztig-Vogan polynomials in these categories and the relationships 
among them.  (Recall that local systems are parametrized by certain orbits in 
pairs of flag varieties.)

Beyond parametrizing representations of various categories, orbits on 
the flag variety and Bruhat order appear 
in geometry (symmetric spaces, spherical homogeneous spaces) and in number 
theory problems in which one studies the fixed points of an involution.
Bruhat order is ubiquitous in mathematics and is of fundamental importance.

We begin this paper by showing that Bruhat order can be described using only 
simple relations by:
\begin{enumerate}
\item a simple geometric argument for spherical subgroups, which includes all 
of the cases considered, showing that Bruhat order satisfies property $Z$ 
(see \cite{D2} Theorem 1.1 and \cite{RS} Property 5.12(d) or definition 
\ref{PropertyZ})
\item using property $Z$ to show that Bruhat order has the subexpression 
property.
\end{enumerate}

Simple relations for Bruhat order are examined from the 
following perspectives for each of $B \backslash G / B$, 
$P \backslash G / B$, and $K \backslash G/B$:  
\begin{itemize}
\item topological (closure order)
\item cross actions and Cayley transforms
\item roots and the Weyl group
\item roots and pullbacks.
\end{itemize}
Strong analogies are drawn between the different cases 
$\BGB$, $\PGB$, and $\KGB$.  This permits definitions of 
standard objects and proofs of properties to be simplified for $\PGB$ and 
for $\KGB$: it is more efficient to exploit similarities with $\BGB$ 
than it is to introduce new machinery to accommodate the differences.
The simplest combinatorial descriptions of the
simple Bruhat relations are Theorems \ref{BGBBruhat}, \ref{PGBBruhat}, and 
\ref{KGBBruhat}.

This paper is structured as follows.
Section \ref{Notation} contains notation and the setup which will remain 
fixed for the duration of the paper.
In section \ref{Reduction} we show that Bruhat order for spherical subgroups 
can be described using only simple relations.
Sections \ref{BGB} and \ref{PGB} discuss Bruhat order for $B\backslash G / B$ 
and for $P \backslash G / B$, respectively.

We discuss reduced expressions and the exchange property in sections 
\ref{Reduced} and \ref{Exchange} for each of $\BGB$ and $\PGB$.

In section \ref{KGBSection}, Bruhat order for $K \backslash G / B$ is 
simplified and 
shown to be analogous to Bruhat orders for $B \backslash G / B$ and for $P 
\backslash G / B$.  

In section \ref{KGP}, we give a simple combinatorial parametrization of $\KGP$
(Theorem \ref{KGPParam}).
In the same way that $\PGB$ is in bijection with maximal length 
coset representatives, $\KGP$ is in bijection with ``$P$-maximal'' elements 
of $\KGB$.
We show that if $KuB$ and $KvB$ are ``$P$-maximal'', then $KuP \preceq KvP \iff 
KuB \preceq KvB$.  We discuss how the monoidal action descends (or fails to 
descend) from $\KGB$ to $\KGP$.  

The final section contains a discussion of future work.

\subsection{Acknowledgements}  I would like to thank Annegret Paul and 
Siddhartha Sahi from whom I have learned a tremendous amount.  I would also 
like to thank Athony Henderson and John Stembridge for helpful feedback and 
David Vogan for his comments and his suggestion to look for the simplest 
possible geometric explanation that Bruhat order depends only on simple 
relations.

\section{Setup and Notation} \label{Notation}
The following notation will be fixed for the duration of this paper.
\begin{itemize}
\item $G$: complex reductive linear algebraic group
\item $\g$:  the Lie algebra of $G$.  Analogous notation will be used for 
Lie algebras of other groups.
\item $\theta \in \Aut(G)$: an algebraic automorphism of order two
\item $K = G^\theta$
\item $B=TU$:  the Levi decomposition of a Borel subgroup.  We may assume that 
we selected both $B$ and its Levi decomposition $B=TU$ to be $\theta$-stable 
(exists by Steinberg, see \cite{S} 2.3). Therefore $T$ is maximally compact, 
i.e.  $\dim \mft \cap \g^\theta$ is maximal so all of the roots with respect to 
$\mft$ have non-trivial restriction to $\mft^\theta$ (see Proposition 6.70 of 
\cite{K}).
\item  $\Delta( \g, \mft )$:  the roots of $\g$ with respect to $\mft$
\item $W = W_G$:  the Weyl group $N_G(T) / T$ of $G$ 
\item $\Pi$:  the set of simple roots corresponding to $B$
\item $S$:  the set of simple reflections corresponding to $\Pi$
\item $P_J$:  the standard parabolic subgroup corresponding to $J \subset \Pi$
\item $P = LN$:  the $T$-stable Levi decomposition of $P$
\item $I \subset \Pi$: the subset corresponding to $P$
\item $W_L$:  the Weyl Group $N_L(T) / T$ of $L$
\item $x_\alpha: \bbR \to G$:  for the simple root $\alpha$, the
one-parameter subgroup of $G$ associated to $\alpha$.  This satisfies 
$t x_\alpha( \tau ) t^{-1} = x_\alpha( t \tau t^{-1} )$ for all $t, \tau \in T$.
Then $x_{-\alpha} : \bbR \to G$ is chosen to be the unique one-parameter 
subgroup such that $x_\alpha(1)x_{-\alpha}(-1) x_\alpha(1) \in N(T)$.
\item $\phi_\alpha:  SL_2 \to G$: for the simple root $\alpha$, the group 
homomorphism satisfying
        $$x_\alpha( m ) = \phi_\alpha
        \left(\begin{array}{cc}
        1 & m \\ 0 & 1
        \end{array}\right), \qquad
        x_{-\alpha}( m ) = \phi_\alpha
        \left(\begin{array}{cc}
        1 & 0 \\ m & 1
        \end{array}\right), \qquad
        \alpha^\vee(t) = \phi_\alpha
        \left(\begin{array}{cc}
        t & 0 \\ 0 & t^{-1}
        \end{array}\right). $$
\item $\dot{s}_\alpha = x_\alpha(-1) x_{-\alpha}(1) x_\alpha(-1) =
        \phi_\alpha \left(\begin{array}{cc}
        0 & -1 \\ 1 & 0
        \end{array}\right) \in N(T)$
for the simple root $\alpha$ ($\dot{s}_\alpha = n_\alpha$ in the notation of 
\cite{RS})
\item $\dot{w}$: Given $w = s_{i_1} s_{i_2} \cdots s_{i_k} \in W$ a reduced 
expression, define $\dot{w} = \dot{s}_{i_1} \dot{s}_{i_2} \cdots \dot{s}_{i_k} 
\in N(T)$.  It is known that $\dot{w}$ is independent of the reduced expression 
chosen.
\item $H_g := gHg^{-1}$ for every $g \in G$, $H \leq G$
\item $\sB :=$ the variety of Borel subgroups of $G$.  This is in bijection 
with $G / B$ where $B_g \leftrightarrow gB$.
\end{itemize}

\section{Spherical Subgroups:  Reducing Bruhat Order to its Simple 
Relations} \label{Reduction}
In this section, we prove that Bruhat order on $\BGB$, $\PGB$, and $\KGB$ may 
be described using only simple relations; more generally, if $H$ is a spherical 
subgroup of $G$, then Bruhat order on $\HGB$ can be described 
using only simple relations.  For readers far more accustomed to Bruhat order 
on the Weyl group than Bruhat order on $\KGB$, it might be easier to read this 
section last.

Often, one first encounters Bruhat order as a partial order on the Weyl group 
and on quotients of the Weyl group.  That definition of Bruhat order does not 
immediately lend itself to generalization.  Since our goal is to unify Bruhat 
order for $\BGB$, $\PGB$, $\KGB$, and for $\KGP$, before we obtain 
combinatorially analogous definitions of Bruhat order, we focus on orbits on 
the flag variety $G/B$ and use the equivalent topological definition of 
Bruhat order which is common to all of our settings.  Our subgroups $B$, $P$, 
and $K$ share the common property that they are all spherical subgroups of $G$.

\begin{definition}
A closed subgroup $H$ of $G$ is spherical if it has an open orbit 
in $G/B$.
\end{definition}
It is well-known that if $H$ is spherical, then $\HGB$ is 
finite.  This will also follow from the subexpression property.

\begin{definition}
Let $H$ be a spherical subgroup of $G$.
Then the closure order on $\HGB$ is defined by:
$$\cO_1 \preceq^H \cO_2 \quad\text{ if }\quad \cO_1 \subset \overline{\cO}_2.$$
Bruhat order  on $\HGB$ is defined to be closure order.
\end{definition}

It is well-known that under the bijections $\BGB \leftrightarrow W_G$ and 
$\PGB \leftrightarrow W_L \backslash W_G$, the topological definition of 
Bruhat order agrees with the usual definition.

We will see that this 
topological definition can easily be used to find a common proof for all of 
our cases that Bruhat order satisfies property $Z$ and hence satisfies the 
subexpression property.  Thus, Bruhat order for $\BGB$, $\PGB$, and for $\KGB$ 
may be described using only what we will call simple relations.  

\begin{notation}
Given $\alpha$ a simple root, we define $P_\alpha = P_{\lbrace \alpha
\rbrace}$  (the standard parabolic of type $\alpha$ containing $B$).
Then we have the canonical projection:
$$ \pi_\alpha : G / B \to G / P_\alpha.$$
which may be viewed as a projection from  $\sB$ to $\sP_\alpha$, the variety
of parabolics of type $\alpha$.
\end{notation}

The set of Borel subgroups contained in $P_\alpha$ is in bijection with 
$\bbP^1$.  Therefore we see that:

\begin{lemma} (\cite{V6})
$\pi_\alpha$ is a $\bbP^1$-fibration:  $\pi_\alpha^{-1}\pi_\alpha(x) \cong
\bbP^1$ for all $x \in \sB$.
$$
\begin{array}{ccc}
\bbP^1 & \to & G / B \\ 
& & \,\,\,\quad \downarrow  \pi_\alpha \\
& & G / P_\alpha.
\end{array}
$$
\end{lemma}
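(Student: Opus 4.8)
The plan is to unwind $\pi_\alpha$ on the level of flag varieties, identify each of its fibers with the set of Borel subgroups contained in a fixed parabolic of type $\alpha$, and then recognize that set as $\bbP^1$ via the preceding remark (equivalently, via the semisimple-rank-one Levi factor of $P_\alpha$).

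First I would pass to the geometric picture. Under the bijections $G/B \leftrightarrow \sB$ and $G/P_\alpha \leftrightarrow \sP_\alpha$, the projection $gB \mapsto gP_\alpha$ becomes $B_g \mapsto (P_\alpha)_g$, and $B_g = gBg^{-1} \subseteq gP_\alpha g^{-1} = (P_\alpha)_g$; so $\pi_\alpha$ sends a Borel subgroup to the unique parabolic of type $\alpha$ containing it (uniqueness because the parabolics containing a fixed Borel are parametrized by subsets of its simple roots). Consequently, for $Q \in \sP_\alpha$ the fiber $\pi_\alpha^{-1}(Q)$ is exactly the set of Borel subgroups of $G$ contained in $Q$, which, since $Q$ is parabolic, is the same as the set of Borel subgroups of $Q$.

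Next I would reduce to a single fiber by homogeneity: $G$ acts transitively on $\sP_\alpha$ by conjugation, and this action lifts the conjugation action on $\sB$ through $\pi_\alpha$, so all fibers of $\pi_\alpha$ are $G$-conjugate and it suffices to treat $Q = P_\alpha$. Borel's conjugacy theorem applied inside $P_\alpha$ identifies $\pi_\alpha^{-1}(P_\alpha)$ with the conjugation orbit of $B$ under $P_\alpha$, whose stabilizer is $N_{P_\alpha}(B) = B \cap P_\alpha = B$; hence this fiber is $P_\alpha/B$. Writing $P_\alpha = L_\alpha N_\alpha$ for the standard Levi decomposition, we have $N_\alpha \subseteq U \subseteq B$ and $B = (B \cap L_\alpha)N_\alpha$, so $P_\alpha/B \cong L_\alpha/(B \cap L_\alpha)$, the flag variety of $L_\alpha$. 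Since $L_\alpha$ has semisimple rank one, with derived group the image of $\phi_\alpha \colon SL_2 \to G$ from Section \ref{Notation}, this flag variety is isomorphic to that of $SL_2$, i.e.\ to $\bbP^1$; this is exactly the remark preceding the lemma that the Borel subgroups contained in $P_\alpha$ form a $\bbP^1$. Combining the steps, $\pi_\alpha^{-1}\pi_\alpha(x) \cong \bbP^1$ for every $x \in \sB$.

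I expect the only care needed is the bookkeeping in the first step — verifying that the identifications $G/B \leftrightarrow \sB$ and $G/P_\alpha \leftrightarrow \sP_\alpha$ intertwine $\pi_\alpha$ with the assignment sending a Borel to the parabolic of type $\alpha$ containing it — rather than in the rank-one computation, which is classical. If one wants the word ``fibration'' in the locally trivial sense rather than merely ``all fibers $\cong \bbP^1$'', one additionally notes that $G \to G/P_\alpha$ is a locally trivial principal $P_\alpha$-bundle, so $\pi_\alpha$ is the associated bundle $G \times^{P_\alpha} (P_\alpha/B) \to G/P_\alpha$ with fiber $\bbP^1$; but only the fiberwise statement is used in what follows.
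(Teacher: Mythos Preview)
Your proof is correct and expands on precisely the reasoning the paper gives: the paper does not supply a proof of this lemma but merely observes, in the sentence immediately preceding it, that the set of Borel subgroups contained in $P_\alpha$ is in bijection with $\bbP^1$, and then cites \cite{V6} for the lemma. Your argument fills in the details behind that one-line justification in the standard way.
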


Because of the $\bbP^1$-fibration,
\begin{corollary}
Let $H$ be a subgroup of $G$ and let $\cO$ be an $H$-orbit on 
$G/B$.  Then
$$\dim \pi_\alpha^{-1} \pi_\alpha ( \cO ) = \dim \cO \text{ or } \dim \cO + 1.$$
\end{corollary}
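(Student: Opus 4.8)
The plan is to relate the three quantities $\dim\cO$, $\dim\pi_\alpha(\cO)$, and $\dim\pi_\alpha^{-1}\pi_\alpha(\cO)$, using only the $\bbP^1$-fibration of the previous lemma together with the semicontinuity of fibre dimension. Since $\pi_\alpha\colon G/B\to G/P_\alpha$ is $G$-equivariant it is a fortiori $H$-equivariant, so it carries the $H$-orbit $\cO$ onto a single $H$-orbit $\pi_\alpha(\cO)$ and restricts to a surjective morphism $\cO\to\pi_\alpha(\cO)$; moreover $\pi_\alpha^{-1}\pi_\alpha(\cO)$ is precisely the union of the $\bbP^1$-fibres of $\pi_\alpha$ that meet $\cO$.

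First I would compute $\dim\pi_\alpha^{-1}\pi_\alpha(\cO)$. A fibration pulls back along any morphism, so restricting $\pi_\alpha$ over the inclusion $\pi_\alpha(\cO)\hookrightarrow G/P_\alpha$ exhibits $\pi_\alpha^{-1}\pi_\alpha(\cO)\to\pi_\alpha(\cO)$ as a morphism every one of whose fibres is isomorphic to $\bbP^1$. As all fibres---in particular the generic one---have dimension $1$, the fibre-dimension theorem forces $\dim\pi_\alpha^{-1}\pi_\alpha(\cO)=\dim\pi_\alpha(\cO)+1$. To apply the theorem cleanly one may pass to a connected component of $\cO$, which is an $H^{\circ}$-orbit mapping onto a connected component of $\pi_\alpha(\cO)$; since the components of an orbit are mutually isomorphic, none of the dimensions involved change.

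Next I would bound the fibres of the restricted map $\cO\to\pi_\alpha(\cO)$. The fibre over a point $p$ is $\cO\cap\pi_\alpha^{-1}(p)$, a locally closed subset of $\pi_\alpha^{-1}(p)\cong\bbP^1$, hence of dimension at most $1$. The fibre-dimension theorem applied to this dominant morphism gives $\dim\cO-\dim\pi_\alpha(\cO)\le 1$, while $\dim\cO\ge\dim\pi_\alpha(\cO)$ since the morphism is onto. Thus $\dim\pi_\alpha(\cO)+1$ equals either $\dim\cO$ or $\dim\cO+1$, and combining with the previous paragraph yields $\dim\pi_\alpha^{-1}\pi_\alpha(\cO)=\dim\cO$ or $\dim\cO+1$, as claimed.

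I do not expect any real obstacle: the content is entirely contained in the $\bbP^1$-fibration and the semicontinuity of fibre dimension. The only points demanding a line of care are the reduction to an irreducible source and target (handled by passing to $H^{\circ}$, as above) and the observation that the $\bbP^1$-fibration remains a fibration after restriction to the locally closed---rather than closed---subvariety $\pi_\alpha(\cO)$, which is immediate from base change.
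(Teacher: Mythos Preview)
Your argument is correct and is precisely the intended one: the paper states the corollary with only the phrase ``Because of the $\bbP^1$-fibration'' as justification, leaving the details you supply implicit. Your write-up simply fleshes out what the paper takes for granted, namely that $\dim\pi_\alpha^{-1}\pi_\alpha(\cO)=\dim\pi_\alpha(\cO)+1$ from the fibration and that $\dim\cO-\dim\pi_\alpha(\cO)\in\{0,1\}$ because the fibres of $\cO\to\pi_\alpha(\cO)$ sit inside $\bbP^1$.
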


\begin{lemma}{\cite{Kno}}
For spherical subgroups $H$ of $G$ and $\cO$ an $H$-orbit on $G/B$,
there is a unique dense orbit $\cO' \subset \pi_\alpha^{-1} \pi_\alpha( \cO )$.
$\pi_\alpha^{-1}\pi_\alpha( \cO )$ is a union of 1, 2, or 3 orbits.
\end{lemma}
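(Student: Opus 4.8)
The plan is to reduce the whole assertion to the classical geometry of $\bbP^1$ acted on by subgroups of $PGL_2$. Set $Y := \pi_\alpha(\cO) \subseteq G/P_\alpha$ and $Z := \pi_\alpha^{-1}\pi_\alpha(\cO) = \pi_\alpha^{-1}(Y)$. Since $\pi_\alpha$ is $G$-equivariant, $Y$ is a single $H$-orbit, so $H$ permutes the finitely many irreducible components of $Y$ transitively; because $Z \to Y$ is a $\bbP^1$-bundle, the same is true of the components of $Z$, and $Z$ is $H$-stable of pure dimension $\dim Y + 1$. As $H$ is spherical, $\HGB$ is finite, so $Z$ is a finite union of $H$-orbits; the union of those of dimension $< \dim Z$ is a proper closed subset, so the orbits of dimension $\dim Z$ cover a dense open subset of $Z$, and since $H$ permutes the components of $Z$ transitively there is exactly one such orbit. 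That orbit is the required $\cO'$, and it is open and dense in $Z$.

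For the count I would fix $y \in Y$ and pass to the fiber $F_y := \pi_\alpha^{-1}(y)$, the $\bbP^1$ of Borel subgroups contained in the type-$\alpha$ parabolic corresponding to $y$. The stabilizer $H_y$ of $y$ acts algebraically on $F_y$ through a homomorphism into $\Aut(\bbP^1) = PGL_2$; call its (closed) image $\overline{H}_y$. Since $H$ is transitive on $Y$, restriction to $F_y$ yields a bijection between $H$-orbits on $Z$ and $\overline{H}_y$-orbits on $\bbP^1$. Under this bijection the dense orbit $\cO'$ corresponds to an $\overline{H}_y$-orbit $\Omega \subseteq \bbP^1$ of dimension $\dim Z - \dim Y = 1$, i.e. a dense open orbit: every non-dense $H$-orbit in $Z$ still surjects onto the homogeneous space $Y$, hence meets $F_y$ in a finite $\overline{H}_y$-orbit, so $\cO'$ is forced to meet $F_y$ in a one-dimensional one.

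It then remains to count the orbits of a closed subgroup $\overline{H}_y \le PGL_2$ that has a dense orbit $\Omega$ on $\bbP^1$. The crucial observation is that $\bbP^1 \setminus \Omega$ is a finite, $\overline{H}_y$-stable set of cardinality at most $2$: an element of $PGL_2$ fixing three distinct points is the identity, so if $|\bbP^1 \setminus \Omega| \ge 3$ the subgroup of $PGL_2$ stabilizing that set injects into its finite symmetric group and hence cannot act transitively on the infinite set $\Omega$. Splitting according to whether $|\bbP^1 \setminus \Omega|$ is $0$, $1$, or $2$ — and, in the last case, according to whether $\overline{H}_y$ fixes the two points or interchanges them (the normalizer-of-a-torus case) — gives respectively $1$ orbit, $2$ orbits, or ($2$ or $3$) orbits on $\bbP^1$. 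Transporting back through the bijection, $Z = \pi_\alpha^{-1}\pi_\alpha(\cO)$ is a union of $1$, $2$, or $3$ $H$-orbits.

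The step that needs genuine care, rather than routine bookkeeping, is the one where sphericity is actually used: identifying $H$-orbits on $Z$ with $\overline{H}_y$-orbits on $\bbP^1$ and checking that the dense orbit restricts to a dense orbit there. This is exactly what rules out $\overline{H}_y$ being finite — which would produce infinitely many orbits on $\bbP^1$, hence infinitely many on $Z$ — and pins it down to the short list of positive-dimensional subgroups of $PGL_2$ handled above. The remaining ingredients — the $\bbP^1$-bundle structure of $Z \to Y$, transitivity of $H$ on components, and the small-rank $PGL_2$ analysis — are straightforward.
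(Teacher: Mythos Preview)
Your overall strategy matches the paper's: reduce to the fibre $\bbP^1$ of $\pi_\alpha$, identify $H$-orbits on $\pi_\alpha^{-1}\pi_\alpha(\cO)$ with orbits of the image $\overline{H}_y \subset \Aut(\bbP^1)\cong PGL_2$, and then run the short classification of such actions. The bijection you set up and the $PGL_2$ endgame are essentially what the paper (following Knop) does.

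The one substantive divergence is \emph{where sphericity enters}. You invoke finiteness of $\HGB$ at the outset to produce a dense orbit in $Z$ and to force $\overline{H}_y$ to have a dense (hence one-dimensional) orbit on $\bbP^1$. The paper deliberately avoids this: it cites Knop's complexity theorem (any $B$-stable subvariety of $G/H$ has complexity $0$) to get the dense orbit, and Knop's Lemma~3.1 to conclude directly that $\overline{H}_y$ has positive dimension. The reason this matters is that, in the paper's internal logic, finiteness of $\HGB$ is \emph{derived} later as a corollary of this lemma (via property~$Z$ and the subexpression property). So your proof, dropped into the paper verbatim, would be circular. It is not mathematically wrong---the paper itself notes that finiteness is ``well-known'' independently---but you should be aware that you are importing a different external fact than the paper does, and that the paper's choice is made precisely to keep the derivation of finiteness self-contained.

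A minor point: your uniqueness argument via ``$H$ permutes the components of $Z$ transitively'' is correct but somewhat redundant once you have the fibre bijection in hand---uniqueness of the dense orbit on $\bbP^1$ transports back immediately. The paper does it that way and skips the component bookkeeping.
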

\begin{proof}
This is a consequence of the geometry of each $\bbP^1$-fibre.  Here, we 
outline Knop's work for its relevance to finiteness of $\HGB$.

Given a $B$-variety, the complexity of the variety is defined to be the 
minimal codimension of a $B$-orbit 
in the variety (\cite{Kno}, p. 287).  Thus, since $H$ is spherical, the 
complexity of $G/H$ is $0$.  By Theorem 2.2 of \cite{Kno}, any $B$-stable 
subvariety of $G/H$ also has complexity $0$.  Therefore $\pi_\alpha^{-1} 
\pi_\alpha( \cO )$ has a codimension $0$ $H$-orbit, $\cO'$.

Let $x \in G$ be such that $B_x \in \cO'$.  Using the notation of this paper, 
there is a correspondence (\cite{Kno} p. 290) between $H \cap 
(P_\alpha)_x$-orbits on 
$(P_\alpha)_x / B_x \cong \bbP^1$ and the $H$-orbits in $\pi_\alpha^{-1}
\pi_\alpha( \cO' ) = \pi_\alpha^{-1}\pi_\alpha( \cO ) = H \backslash x P_\alpha
/B$ via $H \cap (P_\alpha)_x x p x^{-1} x B \mapsto H xp B$ for $p \in 
P_\alpha$.  This 
map is well-defined and surjective.  To show injectivity, suppose that
$H xp_1 B = H xp_2 B$ for some $p_i \in P_\alpha$.  Then we may assume that 
$x p_1 = h x p_2 b$ for some $h \in H$ and $b \in B$.  Rearranging, we see that
$h \in H \cap (P_\alpha)_x$, whence $H \cap (P_\alpha)_x xp_1 B
= H \cap (P_\alpha)_x x p_2 B$.

Focusing on the action of $H \cap (P_\alpha)_x$ on $(P_\alpha)_x / B_x \cong 
\bbP^1$, we may instead study the action of $\bar{H}$ which denotes the 
quotient of $H \cap (P_\alpha)_x$ by the kernel of the action on $\bbP^1$.
Then $\bar{H}$ may be viewed as a subgroup of $\Aut( \bbP^1 ) \cong 
PSL_2( \bbC )$.  According to \cite{Kno} Lemma 3.1, since the complexity of 
$G/H$ is $0$, therefore $\bar{H}$ has positive dimension.  It is easy to 
classify the orbits of positive dimension subgroups of $PSL_2( \bbC )$ on 
$\bbP^1$, and thus $\pi_\alpha^{-1}\pi_\alpha( \cO )$ is a union of 1, 2, or 
3 orbits with $\cO'$ the unique dense orbit (see \cite{Kno} p. 291).
\end{proof}

\begin{definition}
If in the previous corollary $\dim \cO' = \dim \cO + 1$, then write 
$$\cO \alphato \cO' \text{ or } \cO \preceq^H_\alpha \cO'.$$
This is a simple relation for Bruhat order.
\end{definition}

\begin{remark} \label{RemarkAlphaTo}
\begin{enumerate}
\item For $\BGB$, we will see that for some $w \in W_G$, $\cO = BwB$ and 
$\cO' = Bws_\alpha B$ with $\ell( ws_\alpha ) = \ell(w) + 1$.
\item Since $\cO$ is an arbitrary orbit in $\pi_\alpha^{-1}\pi_\alpha(\cO')$ 
different from $\cO'$, therefore $\cO \alphato \cO'$ for any orbit $\cO$ 
different from $\cO'$ in $\pi_\alpha^{-1}\pi_\alpha( \cO' )$.
\end{enumerate}
\end{remark}

\begin{lemma}\label{TopologyLemma}
Let $P_1 \leq P_2 \leq G$ be parabolic subgroups.  Since parabolic subgroups 
are closed, each $G / P_i$ is equipped with the 
quotient topology.  Then, 
letting $p_i : G \to G / P_i$ and $\pi:  G/P_1 \to G / P_2$ be the natural 
projections, we have the commutative diagram:
$$
\begin{diagram}
G & \rTo^{p_2} & G / P_2 \\
\dTo^{p_1} & \ruTo_{\pi} & \\
G / P_1.
\end{diagram}
$$
Then given $X_i \subset G / P_i$:
\begin{enumerate}
\item $\pi( \overline{X_1} ) = \overline{\pi(X_1)}$
\item $\pi^{-1}( \overline{X_2} ) = \overline{\pi^{-1}(X_2)}$
\item $\pi^{-1}\pi( \overline{X_1} ) = \overline{ \pi^{-1}\pi(X_1)}$.
\end{enumerate}
\end{lemma}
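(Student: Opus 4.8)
The plan is to isolate from the geometry exactly two topological properties of $\pi$---that it is an open map and a closed map---and then deduce all three identities by formal point-set topology, using nothing beyond these.

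First I would record that $\pi$ is continuous, open, and closed. Continuity is automatic since $p_2 = \pi \circ p_1$ and $G/P_1$ carries the quotient topology. For openness, the orbit maps $p_1$ and $p_2$ are open (a quotient of a topological group by a subgroup action is an open map), so if $V \subseteq G/P_1$ is open then $p_1^{-1}(V)$ is open in $G$ and $\pi(V) = p_2\big(p_1^{-1}(V)\big)$ is open in $G/P_2$. For closedness I would use that $G/P_1$ is complete while $G/P_2$ is separated, so every morphism $G/P_1 \to G/P_2$ is proper, hence closed; equivalently, in the analytic topology $G/P_1$ is compact and $G/P_2$ is Hausdorff, so any continuous map between them is closed.

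Granting this, part (1) follows because continuity gives $\pi(\overline{X_1}) \subseteq \overline{\pi(X_1)}$, while closedness makes $\pi(\overline{X_1})$ a closed set containing $\pi(X_1)$, hence containing $\overline{\pi(X_1)}$. Part (2) follows because continuity gives that $\pi^{-1}(\overline{X_2})$ is closed and contains $\pi^{-1}(X_2)$, so $\overline{\pi^{-1}(X_2)} \subseteq \pi^{-1}(\overline{X_2})$, and openness gives the reverse: if $\pi(y) \in \overline{X_2}$ and $U$ is any open neighbourhood of $y$, then $\pi(U)$ is open and contains $\pi(y)$, hence meets $X_2$, so $U$ meets $\pi^{-1}(X_2)$, showing $y \in \overline{\pi^{-1}(X_2)}$. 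Part (3) is then immediate: applying (1) to $X_1$ and then (2) with $X_2 := \pi(X_1)$ gives $\pi^{-1}\pi(\overline{X_1}) = \pi^{-1}\big(\overline{\pi(X_1)}\big) = \overline{\pi^{-1}\pi(X_1)}$.

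The only step carrying real content is the claim that $\pi$ is closed (equivalently proper); everything else is formal. So the main obstacle is little more than bookkeeping---choosing the topology on $G/P_i$ so that both the openness argument and the properness argument apply, and, if one prefers to stay in the Zariski world, invoking completeness of $G/P_1$ and separatedness of $G/P_2$ in place of compactness and Hausdorffness.
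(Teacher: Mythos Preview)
Your proof is correct, and its core is the same as the paper's: both arguments hinge on the fact that $G/P_1$ is complete and $G/P_2$ is separated, so $\pi$ is a closed map, which gives the nontrivial inclusion in (1); and both deduce (3) by combining (1) and (2).

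The one genuine difference is in the handling of the nontrivial inclusion in (2). You explicitly prove and use that $\pi$ is \emph{open} (via openness of the orbit maps $p_i$), then run a clean neighbourhood argument. The paper never isolates openness; instead it reruns its quotient-topology chain of inclusions with the roles of $p_1$ and $p_2$ swapped, using that $\pi^{-1}(X_2) = p_1\big(p_2^{-1}(X_2)\big)$. Your packaging is more conceptual---once you have ``continuous, open, closed'' the three identities are pure point-set topology---while the paper's argument stays closer to the specific quotient structure. Both are valid; yours makes clearer that nothing special about algebraic groups is used beyond completeness of the partial flag variety.
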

\begin{proof}
Each $G/P_i$ is a projective variety, and hence complete.  Then each 
$\pi( \overline{X_1} )$ must be closed (see \cite{S2}, 6.1.2), whence 
$\pi( \overline{X_1} ) \supset \overline{\pi(X_1)}$.
(Note that this containment could also follow from Lemma 2, p. 68, of 
\cite{St}, just as Lemma 4.5 of \cite{RS} did.)

For the opposite containment, since the topology on $G/P_i$ is the quotient 
topology, for $X_1 \subset G / P_1$:
\begin{eqnarray*}
\overline{\pi( X_1 )} &=& \overline{p_2( p_1^{-1}(X_1) )}  \\
&\supset& \bigcap_{V \supset p_1^{-1}(X_1) \text{ closed}} p_2(V) 
\qquad \text{by definition of quotient topology} \\
&=& \bigcap_{V \supset p_1^{-1}(X_1) \text{ closed}} \pi \circ p_1( V ) \\
&\supset& \bigcap_{X \supset X_1 \text{ closed}} \pi( X ) \\
&=& \pi( \overline{X_1} ).
\end{eqnarray*}

Given $X_2 \subset G / H_2$, repeating the above formulas with $p_1$ in place 
of $p_2$, $p_2$ in place of $p_1$, and $\pi^{-1}$ in place of $\pi$ yields 
$\overline{\pi^{-1}( X_2 )} \supset \pi^{-1}( \overline{X_2})$.  We have the 
opposite containment since $\pi$ is continuous.

The final statement follows from the first two statements.
\end{proof}

\begin{definition} \label{PropertyZ} (\cite{D2}, \cite{RS})
Let $H$ be a spherical subgroup of $G$ and
consider Bruhat order on 
$\HGB$.  Let $\alpha$ be a simple root and let $u_1, u_2, v_1, 
v_2 \in H \backslash G / B$ with $u_1 \alphato u_2$ and $v_1 \alphato v_2$.  We 
say that Bruhat order satisfies property $Z(\alpha, u_1, v_1)$ if the 
following are equivalent:
\begin{enumerate}
\item \label{Z1}
$u_1 \preceq^H v_1$ or there exists $x$ such that $x \alphato u_2$ and $x \preceq^H v_1$
\item \label{Z2}
$u_2 \preceq^H v_2$
\item \label{Z3}
$u_1 \preceq^H v_2$.
\end{enumerate}
Bruhat order is said to satisfy property $Z$ if it satisfies every property 
$Z(\alpha,u_1,v_1)$.
\end{definition}

\begin{theorem}
If $H$ is a spherical subgroup of $G$,
then Bruhat order on $H \backslash G / B$ satisfies 
property $Z$.
\end{theorem}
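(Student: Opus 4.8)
The plan is to exploit the $\bbP^1$-fibration $\pi_\alpha$ and the topological characterization of Bruhat order as closure order, so that property $Z(\alpha,u_1,v_1)$ becomes a statement purely about closures of $H$-stable subsets of $G/B$ and their images/preimages under $\pi_\alpha$. The key geometric input is Lemma \ref{TopologyLemma}: for the parabolic projection $\pi_\alpha : G/B \to G/P_\alpha$ we have $\pi_\alpha^{-1}\pi_\alpha(\overline{X}) = \overline{\pi_\alpha^{-1}\pi_\alpha(X)}$, and in particular taking closures commutes with the operation $X \mapsto \pi_\alpha^{-1}\pi_\alpha(X)$. Combined with Knop's lemma (every fibre-saturation $\pi_\alpha^{-1}\pi_\alpha(\cO)$ has a unique dense $H$-orbit and is a union of $1$, $2$, or $3$ orbits), this gives us tight control: if $u_1 \alphato u_2$ then $u_2$ is the dense orbit in $\pi_\alpha^{-1}\pi_\alpha(u_1) = \pi_\alpha^{-1}\pi_\alpha(u_2)$, so $\overline{u_2} = \pi_\alpha^{-1}\pi_\alpha(\overline{u_1})$ (up to the dense-orbit identification), and similarly $\overline{v_2} = \pi_\alpha^{-1}\pi_\alpha(\overline{v_1})$.

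First I would establish the equivalence of (\ref{Z2}) and (\ref{Z3}): since $u_1 \subset \pi_\alpha^{-1}\pi_\alpha(u_2) \subset \overline{u_2}$, we immediately get $u_1 \preceq^H u_2$, so (\ref{Z2}) $\Rightarrow$ (\ref{Z3}) is transitivity of closure order. Conversely, if $u_1 \preceq^H v_2$, i.e.\ $u_1 \subset \overline{v_2}$, then $\pi_\alpha^{-1}\pi_\alpha(u_1) \subset \pi_\alpha^{-1}\pi_\alpha(\overline{v_2}) = \overline{\pi_\alpha^{-1}\pi_\alpha(v_2)} = \overline{v_2}$ (the last equality because $v_2$ is already $\pi_\alpha$-saturated and dense in its saturation); since $u_2$ is the dense orbit in $\pi_\alpha^{-1}\pi_\alpha(u_1)$ we conclude $u_2 \subset \overline{\pi_\alpha^{-1}\pi_\alpha(u_1)} \subset \overline{v_2}$, which is (\ref{Z2}). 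Next I would show (\ref{Z3}) $\Rightarrow$ (\ref{Z1}): supposing $u_1 \preceq^H v_2$, look at $\pi_\alpha^{-1}\pi_\alpha(v_1) = \pi_\alpha^{-1}\pi_\alpha(v_2)$; this is $\overline{v_2}$-dense-saturated and contains $u_1$ in its closure. Applying $\pi_\alpha^{-1}\pi_\alpha$ to $\overline{u_1}$ and using Lemma \ref{TopologyLemma} again, $u_2 \subset \pi_\alpha^{-1}\pi_\alpha(\overline{u_1}) = \overline{\pi_\alpha^{-1}\pi_\alpha(u_1)} \subset \overline{\pi_\alpha^{-1}\pi_\alpha(v_2)} = \overline{v_1}$... wait, one must be careful: $\pi_\alpha^{-1}\pi_\alpha(v_2)$ need not be closed, but $\overline{\pi_\alpha^{-1}\pi_\alpha(v_2)} = \pi_\alpha^{-1}\pi_\alpha(\overline{v_2})$. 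So $u_2 \subset \pi_\alpha^{-1}\pi_\alpha(\overline{v_2})$. Now $\pi_\alpha^{-1}\pi_\alpha(\overline{v_2}) \cap \pi_\alpha^{-1}\pi_\alpha(v_1)$: the orbit $u_2$ lies in some fibre-saturation meeting $\overline{v_1}$; if $u_2 \subset \overline{v_1}$ we take $x = u_1$ (since $u_1 \preceq^H u_2 \preceq^H v_1$, wait we'd need $u_1 \preceq^H v_1$), otherwise the orbit $x$ in $\pi_\alpha^{-1}\pi_\alpha(u_2) \cap \overline{v_1}$ with maximal dimension satisfies $x \alphato u_2$ and $x \subset \overline{v_1}$. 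This case analysis using Knop's "$1$, $2$, or $3$ orbits per fibre-saturation" classification is exactly what produces the disjunction in condition (\ref{Z1}).

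The main obstacle I anticipate is the implication (\ref{Z1}) $\Rightarrow$ (\ref{Z2}), specifically handling the second alternative of (\ref{Z1}): given $x$ with $x \alphato u_2$ and $x \preceq^H v_1$, I must deduce $u_2 \preceq^H v_2$. The idea is that $x \subset \overline{v_1}$ forces $\pi_\alpha^{-1}\pi_\alpha(x) \subset \pi_\alpha^{-1}\pi_\alpha(\overline{v_1}) = \overline{\pi_\alpha^{-1}\pi_\alpha(v_1)} = \overline{v_2}$ (using that $v_2$ is the dense saturated orbit), and $u_2$ being the dense orbit in $\pi_\alpha^{-1}\pi_\alpha(x)$ gives $u_2 \subset \overline{\pi_\alpha^{-1}\pi_\alpha(x)} \subset \overline{v_2}$. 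The first alternative $u_1 \preceq^H v_1$ is easier: $u_1 \subset \overline{v_1} \subset \overline{\pi_\alpha^{-1}\pi_\alpha(v_1)} = \pi_\alpha^{-1}\pi_\alpha(\overline{v_1})$, hence $\pi_\alpha^{-1}\pi_\alpha(u_1) \subset \pi_\alpha^{-1}\pi_\alpha(\overline{v_1}) = \overline{v_2}$, and again $u_2 \subset \overline{v_2}$. The delicate points throughout are (a) being scrupulous about when a fibre-saturation is closed versus when one must pass to its closure via Lemma \ref{TopologyLemma}, and (b) invoking Knop's trichotomy to guarantee the auxiliary orbit $x$ exists and has the right dimension in the one case where condition (\ref{Z1}) genuinely needs it; I expect the rest to be formal manipulation of closures.
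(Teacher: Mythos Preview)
Your approach is correct and essentially identical to the paper's: both arguments hinge on Lemma~\ref{TopologyLemma} (so that $\overline{\cO_{v_2}} = \pi_\alpha^{-1}\pi_\alpha(\overline{\cO_{v_1}})$) together with the uniqueness of the dense orbit in each $\pi_\alpha$-saturation, and the three implications are proved by the same closure-chasing. The only cosmetic difference is that the paper argues the cycle $(1)\Rightarrow(2)\Rightarrow(3)\Rightarrow(1)$ and in $(3)\Rightarrow(1)$ pivots on $u_1$ (finding an orbit $x \preceq^H v_1$ with $u_1 \subset \pi_\alpha^{-1}\pi_\alpha(\cO_x)$), whereas you add a direct $(3)\Rightarrow(2)$ and in $(3)\Rightarrow(1)$ pivot on $u_2$ (finding $x$ in $\pi_\alpha^{-1}\pi_\alpha(\cO_{u_2}) \cap \overline{\cO_{v_1}}$); neither the full $1$/$2$/$3$-orbit trichotomy nor any closure of the saturation is actually needed beyond the existence of the unique dense orbit.
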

\begin{proof}
We use the notation of the previous definition, replacing $u_1$ with 
$\cO_{u_1}$, for example, to remind us that we are studying $H$-orbits.

\noindent
(\ref{Z1}) $\Rightarrow$ (\ref{Z2}):  Let $x \preceq^H v_1$ be such that 
$x \alphato u_2$, possibly setting $x = u_1$.  Applying Lemma 
\ref{TopologyLemma} with $H_1 = B$, $H_2 = P_\alpha$,
$$\begin{array}{ccc}
\pi_\alpha^{-1}\pi_\alpha( \cO_{x}) & \subset \pi_\alpha^{-1} \pi_\alpha( 
\overline{\cO_{v_1}}) = & \overline{\pi_\alpha^{-1}\pi_\alpha( \cO_{v_1} )} \\
\cup &  & \parallel \\
\cO_{u_2} & & \overline{\cO_{v_2}}
\end{array}
$$
since $\cO_{v_2}$ is dense in $\pi_\alpha^{-1}\pi_\alpha( \cO_{v_1} )$.

\noindent
(\ref{Z2}) $\Rightarrow$ (\ref{Z3}):
It suffices to note that $\cO_{u_1} \subset \overline{\pi_\alpha^{-1}
\pi_\alpha( \cO_{u_1})} = \overline{\cO_{u_2}}$ since $\cO_{u_2}$ is 
dense in $\pi_\alpha^{-1}\pi_\alpha( \cO_{u_1} )$.

\noindent
(\ref{Z3}) $\Rightarrow$ (\ref{Z1}):
If $\cO_{u_1} \subset \overline{\cO_{v_2}} = \overline{\pi_\alpha^{-1}
\pi_\alpha( \cO_{v_1})} = \pi_\alpha^{-1}\pi_\alpha( \overline{\cO_{v_1}})$, 
then  $\cO_{u_1} \subset \pi_\alpha^{-1} \pi_\alpha( \cup_{u \preceq^H v_1} 
\cO_u )$.
Therefore 
$\cO_{u_1} \subset \pi_\alpha^{-1} \pi_\alpha( \cO_x )$ for some 
$x \preceq^H v_1$.  If $x = u_1$ or $x=u_2$, then $u_1 \preceq^H v_1$.  
Otherwise, since $\cO_{u_2}$ is the unique dense orbit in $\pi_\alpha^{-1}
\pi_\alpha( \cO_x )$, $x$ satisfies $x \alphato u_2$ in addition to 
$x \preceq^H v_1$, whence (\ref{Z1}) is satisfied.
\end{proof}

\begin{definition}\label{ReducedDecomposition}(\cite{RS}, 5.7)
Let $H$ be a spherical subgroup of $G$.  A pair of sequences $((v_0, v_1, 
\ldots, v_k), (\alpha_1, \ldots, \alpha_k))$, where the $v_i$ are 
$H$-orbits and the $\alpha_i$ are simple roots, is a reduced 
decomposition of $v \in \HGB$ if
$v_0$ is a closed orbit, $v_k = v$, and $v_{i-1} \overset{\alpha_i}{\mapsto} 
v_{i}$ for
$i = 1, \ldots, k$.  The length of the reduced decomposition is $k$.  Note 
that $\dim \cO_v = \dim \cO_{v_0} + k$.
\end{definition}

\begin{definition}
Given $((v_0, \ldots, v_k), (\alpha_1, \ldots, \alpha_k))$ a reduced 
decomposition of $v \in \HGB$, a subexpression of that reduced decomposition 
is a sequence $(u_0, \ldots, u_k)$ such that $u_0 = v_0$ and for $i = 1, 
\ldots, k$:
\begin{enumerate}
\item  $u_i = u_{i-1}$, or
\item $\exists \, u$ such that both $u_{i-1} \overset{\alpha_i}{\mapsto} u$
and $u_{i} \overset{\alpha_i}{\mapsto} u$, or
\item $u_{i-1} \overset{\alpha_i}{\mapsto} u_i$.
\end{enumerate}
\end{definition}
\begin{remark}
We see by the deletion condition that this generalizes the usual definition 
for subexpression for elements of the Weyl group.
\end{remark}

\begin{theorem}
Let $H$ be a spherical subgroup of $G$.  Then Bruhat order 
for $\HGB$ has the subexpression 
property.  That is, $u \preceq^H v$ if and only if there is a reduced expression 
$((v_0, \ldots, v_k), (\alpha_1, \ldots, \alpha_k))$ of $v$ and a subexpression
$(u_0, \ldots, u_k)$ of the reduced expression such that $u_k = u$.
\end{theorem}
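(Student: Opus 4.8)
The plan is to derive the subexpression property from property $Z$ together with the topological Lemma~\ref{TopologyLemma}, following \cite{D2} and \cite{RS}. Fix a reduced decomposition $((v_0,\ldots,v_k),(\alpha_1,\ldots,\alpha_k))$ of $v$; such a decomposition exists because every $H$-orbit is joined to a closed orbit by a chain of raising moves (see \cite{RS}, \cite{Kno}), which is the one ingredient here not already in hand. Write $\alpha=\alpha_k$ and $v'=v_{k-1}$, so that $v'\alphato v$, and let $D'$ be the truncation $((v_0,\ldots,v_{k-1}),(\alpha_1,\ldots,\alpha_{k-1}))$, a reduced decomposition of $v'$. I would induct on $k$. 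When $k=0$ the orbit $\cO_v$ is closed, so $u\preceq^H v$ gives $\cO_u\subseteq\overline{\cO_v}=\cO_v$, hence $u=v$ and $(v_0)$ is the required subexpression.

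The ``if'' direction uses only Lemma~\ref{TopologyLemma}. If $(u_0,\ldots,u_k)$ is a subexpression of the reduced decomposition with $u_k=u$, then $(u_0,\ldots,u_{k-1})$ is a subexpression of $D'$, so by induction $\cO_{u_{k-1}}\subseteq\overline{\cO_{v'}}$. Applying $\pi_\alpha^{-1}\pi_\alpha$ and Lemma~\ref{TopologyLemma}(3), and using that $\cO_v$ is dense in $\pi_\alpha^{-1}\pi_\alpha(\cO_{v'})$, gives $\pi_\alpha^{-1}\pi_\alpha(\cO_{u_{k-1}})\subseteq\overline{\cO_v}$; since in each of the three clauses defining a subexpression $\cO_{u_k}$ lies in the $\pi_\alpha$-fibre of $\cO_{u_{k-1}}$, we get $\cO_{u_k}\subseteq\overline{\cO_v}$, i.e.\ $u\preceq^H v$.

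The substance is the ``only if'' direction. Suppose $u\preceq^H v$, and let $\hat u$ denote the dense orbit of the $\pi_\alpha$-fibre of $\cO_u$. I would split into three cases. (i) If $\cO_u$ is not dense in its fibre, so $u\alphato\hat u$: property $Z(\alpha,u,v')$ applies; its condition \eqref{Z3}, which is the hypothesis $u\preceq^H v$, holds, so \eqref{Z1} holds, and hence either $u\preceq^H v'$, in which case the subexpression of $D'$ ending at $u$ supplied by induction extends by clause (1), or there is $y\preceq^H v'$ with $y\alphato\hat u$, in which case the subexpression of $D'$ ending at $y$ extends by clause (2) with common successor $\hat u$. (ii) If $\cO_u$ is dense in a non-trivial fibre: choose $x$ with $x\alphato u$ (Remark~\ref{RemarkAlphaTo}); then $x\preceq^H u\preceq^H v$, so property $Z(\alpha,x,v')$, via \eqref{Z3}$\Rightarrow$\eqref{Z1}, yields $y\preceq^H v'$ with $y\alphato u$, and the subexpression of $D'$ ending at $y$ extends by clause (3). (iii) If $\pi_\alpha^{-1}\pi_\alpha(\cO_u)=\cO_u$: by Lemma~\ref{TopologyLemma}(3), $\overline{\cO_v}=\pi_\alpha^{-1}\pi_\alpha(\overline{\cO_{v'}})=\bigcup_{z\preceq^H v'}\pi_\alpha^{-1}\pi_\alpha(\cO_z)$, and since $\cO_u$ is a single $H$-orbit it lies in one fibre $\pi_\alpha^{-1}\pi_\alpha(\cO_z)$ with $z\preceq^H v'$; then $\pi_\alpha(\cO_u)\subseteq\pi_\alpha(\cO_z)$, and as both are $H$-orbits in $G/P_\alpha$ they coincide, so $\cO_u=\pi_\alpha^{-1}\pi_\alpha(\cO_u)=\pi_\alpha^{-1}\pi_\alpha(\cO_z)\supseteq\cO_z$, forcing $u=z\preceq^H v'$, and clause (1) extends. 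In every case a subexpression of the given reduced decomposition ending at $u$ results, completing the induction.

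I expect the main difficulty to be organizing this case analysis cleanly, and in particular case (iii), where $\pi_\alpha^{-1}\pi_\alpha(\cO_u)=\cO_u$ (the compact imaginary situation in $\KGB$) and property $Z$ gives no leverage, so that one must argue directly from the $\pi_\alpha$-saturation of $\cO_u$; one also has to keep track of which clause of the definition of a subexpression is being invoked to pass from $D'$ to the full decomposition. The one external input is the existence of a reduced decomposition of $v$, which the statement presupposes and which I would cite rather than reprove.
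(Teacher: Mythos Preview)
Your proof is correct and follows essentially the same route as the paper: Deodhar's induction on the length $k$ of a reduced decomposition of $v$, invoking property $Z$ (equivalently, Lemma~\ref{TopologyLemma}) at the inductive step. Your case analysis in the ``only if'' direction is in fact more thorough than the paper's sketch---the paper writes only ``there is $u'$ such that $u' \overset{\alpha_k}{\mapsto} u$'' and extends via clause (3), tacitly passing over your cases (i) and (iii), where $\cO_u$ is not dense in its $\alpha_k$-fibre and one must instead extend via clause (2) or argue from $\pi_\alpha$-saturation that $u \preceq^H v_{k-1}$ already.
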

\begin{proof}
Deodhar's proof that property $Z$ implies the subexpression property from 
\cite{D2} works in this general setting.

\noindent
$\Leftarrow:$  We prove by induction on $j$ that for each $0 \leq j \leq k$, 
$u_j \preceq^H v_j$.  The statement holds for $j=0$.
Assume that the statement holds for $j=i-1$.  In case (1) where $u_i = u_{i-1}$,
the statement clearly holds for $j=i$.  In case (3), property $Z$ implies that 
since $u_{i-1} \overset{\alpha_i}{\mapsto} u_i$ and
$v_{i-1} \overset{\alpha_i}{\mapsto} v_i$, then  $u_{i-1} \preceq^H v_{i-1} 
\Rightarrow u_i \preceq^H v_i$.
In case (2), since $u_{i-1} \overset{\alpha_i}{\mapsto} u$ and
$v_{i-1} \overset{\alpha_i}{\mapsto} v_i$, thus  $u_{i-1} \preceq^H v_{i-1} 
\Rightarrow u \preceq^H v_i$.  Then by definition of $u_i$, $u_i \preceq^H u \preceq^H v_i$.

\noindent
$\Rightarrow$:  We prove the converse by induction on the length $k$ of the 
reduced expression.  If $k=0$, $v=v_0$ 
and the only subexpressions are those which terminate in $u_0 = v_0$.
Assume that the converse holds when the reduced expressions have lengths 
less than $k$.  (Note that due to the relationship between reduced expression 
length and dimensions, $u \preceq^H v$ implies that the length of a reduced 
expression for $u$ is less than or equal to the length for $v$.)  Now, $v_{k-1} 
\overset{\alpha_k}{\mapsto} v$.  If $u \preceq^H v_{k-1}$, then by induction, there 
is a subexpression for $u$ of a reduced expression for $v_{k-1}$, whence there 
is a subexpression for $u$ of a reduced expression for $v$.  Otherwise, 
there is $u'$ such that $u' \overset{\alpha_k}{\mapsto} u$ and, by 
property $Z$, $u' \preceq^H v_{k-1}$.  By induction, there is a subexpression for 
$u'$ of a reduced expression for $v_{k-1}$. 
By appending $u$ and $v=v_k$, we see that there is a subexpression for $u$ of 
a reduced expression for $v$.
\end{proof}

\begin{corollary}  If $H$ is a spherical subgroup of $G$, then $\HGB$ is finite.
\end{corollary}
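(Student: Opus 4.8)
The plan is to use the subexpression property to show that each $H$-orbit is pinned down by a bounded amount of combinatorial data, and then to count. First I would record that every $H$-orbit admits a reduced decomposition: since $\cO_v \subseteq \overline{\cO_v}$ we have $v \preceq^H v$, and feeding this into the subexpression property produces a reduced decomposition of $v$ (carrying a subexpression terminating at $v$); in particular a reduced decomposition of $v$ exists.

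Second I would exploit the rigidity of reduced decompositions. In a reduced decomposition $((v_0,\ldots,v_k),(\alpha_1,\ldots,\alpha_k))$ each step $v_{i-1}\overset{\alpha_i}{\mapsto}v_i$ forces $v_i$ to be \emph{the} dense orbit of $\pi_{\alpha_i}^{-1}\pi_{\alpha_i}(\cO_{v_{i-1}})$, which is unique; hence $v_i$ is completely determined by $v_{i-1}$ and $\alpha_i$. Iterating, $v=v_k$ is determined by the closed orbit $v_0$ and the word $(\alpha_1,\ldots,\alpha_k)$ of simple roots, and since $\dim\cO_{v_k}=\dim\cO_{v_0}+k$ with every orbit of dimension at most $N:=\dim G/B$, we get $k\leq N$. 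Thus the assignment ``follow the steps dictated by $\alpha_1,\ldots,\alpha_k$ starting from $v_0$'' --- defined whenever each prescribed step exists --- is a surjection onto $\HGB$ from the set of pairs $(v_0,(\alpha_1,\ldots,\alpha_k))$ with $v_0$ a closed orbit and $k\leq N$. As $\Pi$ is finite, $|\HGB|$ is bounded by $(\text{number of closed } H\text{-orbits})\cdot\sum_{j=0}^{N}|\Pi|^{\,j}$, so everything reduces to the finiteness of the set of closed $H$-orbits.

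I expect this last step to be the genuine obstacle, being the one point not already handled by the machinery above. My approach would be Borel's fixed-point theorem: fix a Borel subgroup $B_H$ of $H$; a closed orbit is a closed subvariety of the projective variety $G/B$, hence a complete $H$-variety, so $B_H$ fixes a point of it, and distinct (disjoint) closed orbits yield distinct $B_H$-fixed points. Hence the number of closed orbits is at most the number of Borel subgroups of $G$ containing $B_H$. For an arbitrary connected solvable subgroup this set of Borels can be positive-dimensional, so sphericity of $H$ is used here in an essential way; the required finiteness is classical in the cases $H\in\{B,P,K\}$ of interest, and in general it can be extracted from Knop's analysis of the $\bbP^1$-fibres (\cite{Kno}). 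Combining this with the counting bound above shows that $\HGB$ is finite.
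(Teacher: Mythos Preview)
Your argument is essentially correct, but it runs in the opposite direction from the paper's and thereby creates an obstacle that the paper never encounters.

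The paper works \emph{top-down} from the unique open orbit $v_{\mathrm{open}}$ (whose existence is precisely the definition of sphericity). A reduced decomposition of $v_{\mathrm{open}}$ has length $k\leq \dim G/B$, and walking \emph{backwards} along such a decomposition, at each step one chooses a simple root $\alpha\in\Pi$ and then one of the at most two non-dense orbits in $\pi_\alpha^{-1}\pi_\alpha(\cO)$ (by Knop's $1$--$2$--$3$ lemma already recorded in the paper). Hence there are only finitely many reduced decompositions of $v_{\mathrm{open}}$. Every orbit satisfies $u\preceq^H v_{\mathrm{open}}$, so by the subexpression property every orbit is the endpoint of some subexpression of one of these finitely many reduced decompositions, and there are finitely many subexpressions of each.

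Your approach works \emph{bottom-up}: you parametrize orbits by pairs (closed orbit, word in $\Pi$), and then must separately establish that there are only finitely many closed orbits. You correctly flag this as the genuine obstacle and reach outside the paper (Borel fixed points plus an appeal to Knop) to handle it. That is legitimate, but notice that the cleanest way to extract finiteness of closed orbits from Knop's fibre analysis is exactly the backward-counting argument above---at which point you have reproduced the paper's proof and the bottom-up detour was unnecessary. The moral is that sphericity hands you a \emph{unique} anchor at the top but says nothing directly about the bottom; the paper exploits this asymmetry, and you should too.

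One small remark on your first paragraph: deducing existence of a reduced decomposition of $v$ from the biconditional in the subexpression theorem applied to $v\preceq^H v$ is slick, but be aware that the proof of the ``only if'' direction in the paper proceeds by induction on the length of a chosen reduced expression for $v$, so strictly speaking it presupposes that existence rather than establishing it. Existence follows separately (e.g.\ by induction on dimension, using that a non-closed, non-open orbit has some $\alpha$ for which $\pi_\alpha^{-1}\pi_\alpha(\cO_v)\neq\cO_v$).
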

\begin{proof}
This was pointed out by Vogan.
Since the dimension of an $H$-orbit is finite, therefore there are finitely 
many reduced expressions for the unique open orbit in $G/B$.  There are 
finitely many subexpressions of each reduced expression, and thus by the 
subexpression property, $\HGB$ is finite.
\end{proof}

\begin{corollary}
Bruhat order for $\BGB$, $\PGB$, and $\KGB$ satisfy the subexpression 
property, whence Bruhat order can be defined using only simple relations.
\end{corollary}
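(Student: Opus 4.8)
The plan is to observe that this corollary is essentially immediate from the preceding subexpression theorem, once we verify that each of $B$, $P$, and $K$ is a spherical subgroup of $G$, and then unwind what the subexpression property is asserting in these three cases.

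First I would check sphericity. For $B$ itself, the Bruhat decomposition $G = \bigsqcup_{w \in W_G} BwB$ exhibits $B$ as acting on $G/B$ with finitely many orbits, the largest of which is open; hence $B$ is spherical. For $P$, since $P \supseteq B$, the $P$-orbit of a point contains the $B$-orbit of that point, so the open $B$-orbit is contained in an open $P$-orbit, and $P$ is spherical. For $K = G^\theta$, sphericity is the classical fact that a symmetric subgroup of a reductive group acts on the flag variety with a dense orbit (equivalently, with finitely many orbits); see \cite{RS} and the references therein. With $B$, $P$, and $K$ all recognized as spherical, the subexpression theorem applies verbatim to each of $\BGB$, $\PGB$, and $\KGB$: for orbits $u, v$, one has $u \preceq^H v$ if and only if $u$ is the terminal entry of a subexpression of some reduced decomposition of $v$ in the sense of Definition \ref{ReducedDecomposition}.

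Finally I would spell out the promised consequence. A reduced decomposition of $v$ is by definition a chain of simple relations $v_0 \alphato v_1 \alphato \cdots \alphato v_k = v$ starting from a closed orbit, and a subexpression at each step either stays fixed, crosses via a common target, or follows one of the simple relations $\preceq^H_\alpha$. Thus every instance of $u \preceq^H v$ is witnessed entirely by the simple relations together with the purely formal operations built into the definition of subexpression; equivalently, Bruhat order on each of $\BGB$, $\PGB$, and $\KGB$ is the partial order generated by the simple relations $\alphato$. This is precisely the assertion that Bruhat order can be defined using only simple relations. I do not anticipate a genuine obstacle here: the one substantive input is the sphericity of $K$, which is classical, and the remainder is bookkeeping against the subexpression theorem already established for arbitrary spherical $H$.
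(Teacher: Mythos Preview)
Your proposal is correct and matches the paper's approach: the corollary is stated without proof in the paper precisely because it is immediate from the preceding subexpression theorem for spherical $H$, together with the fact that $B$, $P$, and $K$ are spherical. Your verification of sphericity in each case and the subsequent unpacking are exactly the intended (trivial) argument.
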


It is the focus of the subsequent chapters to find elementary means of 
describing these simple relations and showing how their descriptions are 
analogous in all of our settings.

\section{Bruhat Order for $B \backslash G / B$} \label{BGB}
\subsection{Equivalence of Closure Order and Bruhat Order on $W$}
As mentioned, it is well-known that $\BGB$ is in bijection with the Weyl 
group:
\begin{proposition}\label{BGBBruhatDecomposition}
Bruhat Decomposition:
$$G = \amalg_{w \in W} B \dot{w} B = \amalg_{w \in W} BwB.$$
Therefore $B \backslash G / B \leftrightarrow W$.
\end{proposition}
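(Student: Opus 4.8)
The plan is to recover the classical Bruhat decomposition. The cleanest route is to observe that $(G,B,N_G(T),S)$ is a Tits system and that the statement is its first structural consequence; I would, however, carry out the rank-one computations explicitly, since they rely on the normalizations of $x_{\pm\alpha}$, $\phi_\alpha$, and $\dot s_\alpha$ fixed in Section \ref{Notation}. Two preliminary remarks make the bookkeeping lighter. First, $BwB = B\dot wB$: the element $\dot w\in N_G(T)$ represents $w$, and any two representatives of $w$ in $N_G(T)$ differ by an element of $T\subseteq B$, so the double coset attached to $w$ is well defined and equals $B\dot wB$. Second, once $G=\coprod_{w\in W}BwB$ is known to be a disjoint union, the bijection $\BGB\leftrightarrow W$, $BwB\leftrightarrow w$, is immediate.

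\emph{Rank-one input and covering.} For a simple root $\alpha$, a direct computation in $SL_2$ through $\phi_\alpha$ gives $P_\alpha=B\sqcup B\dot s_\alpha B$, the inclusion $\dot s_\alpha B\dot s_\alpha^{-1}\subseteq B\cup B\dot s_\alpha B$, and in particular $P_\alpha\neq B$. These yield, for each $s=s_\alpha\in S$ and $w\in W$, the product rules $BsB\cdot BsB=B\cup BsB$ and $BsB\cdot BwB\subseteq BwB\cup BswB$, and by induction on $\ell(w)$ along a reduced word the \emph{sharpened} rule: if $\ell(sw)>\ell(w)$ then $BsB\cdot BwB=BswB$. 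Covering is then formal: $G':=\bigcup_{w\in W}BwB$ contains $B$, is stable under inversion since $(BwB)^{-1}=Bw^{-1}B$, and is stable under right multiplication by each $BsB$; hence $G'$ is a subgroup containing $T$ and every root subgroup (each such subgroup being a conjugate of some $U_\alpha\subseteq B$, $\alpha\in\Pi$, by a product of the $\dot s_{\alpha_i}$), and since $G$ is generated by $T$ and its root subgroups, $G'=G$.

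\emph{Disjointness.} It remains to show $BwB=Bw'B\Rightarrow w=w'$. I would first note that $BwB=B\dot wB$ is an affine cell: the map $\dot wU_w\dot w^{-1}\cdot B\to BwB/B$ is an isomorphism onto $\bbA^{\ell(w)}$, where $U_w=\prod_{\beta>0,\;w^{-1}\beta<0}U_\beta$, so $\dim BwB=\dim B+\ell(w)$ and $BwB=Bw'B$ forces $\ell(w)=\ell(w')$. Then one argues by induction on $\ell(w)$: since the double cosets partition $G$, writing $w'=sv$ with $\ell(v)<\ell(w')$ and splitting into the cases $\ell(sw)<\ell(w)$ and $\ell(sw)>\ell(w)$, the sharpened product rule reduces the equality $BwB=Bw'B$ to an instance of strictly smaller length, the base case being $Bs_\alpha B\neq B$ from the rank-one step; this gives $w=w'$.

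The main obstacle is the disjointness step, and within it the sharpened multiplicativity $BsB\cdot BwB=BswB$ when $\ell(sw)>\ell(w)$ together with its marshalling into the length induction; by contrast the covering, the cell-dimension computation, the reduction $BwB=B\dot wB$, and the final bijection $\BGB\leftrightarrow W$ are routine. The one genuinely non-combinatorial ingredient is $P_\alpha\neq B$ — that the minimal parabolics strictly contain $B$ — which is exactly what makes the decomposition non-degenerate.
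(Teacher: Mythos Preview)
Your argument is correct and is the standard Tits-system proof of the Bruhat decomposition; the disjointness induction you sketch goes through once one observes that multiplying $BwB=Bw'B$ on the left by $BsB$ (with $w'=sv$, $\ell(v)=\ell(w')-1$) forces, via the product rules and the cell-dimension count, $BswB=BvB$ and hence $sw=v$ by induction. The only point worth tightening is that once you have $\dim BwB=\dim B+\ell(w)$, the base case of the induction is simply $\ell(w)=\ell(w')=0\Rightarrow w=w'=e$; the fact $Bs_\alpha B\neq B$ is already subsumed by the dimension statement, so you are invoking it twice.

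As for comparison with the paper: there is nothing to compare. The paper states Proposition~\ref{BGBBruhatDecomposition} as a well-known fact and gives no proof, so your write-up supplies what the paper deliberately omits. If you want to align with the paper's spirit of citing standard results, you could simply point to a reference (e.g.\ \cite{S2} or \cite{H3}) and suppress the details; but if a self-contained argument is desired, what you have is the right one.
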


We review the definition of Bruhat order on the Weyl group $W_G$ arising 
from viewing it as a reflection group.

\begin{definition}
For $u, v \in W$, $t$ a (not necessarily simple) reflection, we write 
$u \xrightarrow{t} v$ if $v = ut$ and $\ell(u) < \ell(v)$.  Bruhat order on 
$W$ is defined by $u \leq v$ if there exists a sequence of elements $w_0, w_1, 
\ldots, w_k \in W$ such that $u=w_0 \rightarrow w_1 \rightarrow \cdots 
\rightarrow w_k = v$.
\end{definition}

\begin{theorem}
Let $\cO_w = B w B$ for $w \in W$ and let $\alpha$ be a simple root.  
Then 
$$\cO_w \alphato \cO \text{ if and only if } \cO = \cO_{ws_\alpha}
\text{ and } w \xrightarrow{s_\alpha} w s_\alpha. $$
\end{theorem}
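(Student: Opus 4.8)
The plan is to analyze the $\mathbb{P}^1$-fibration $\pi_\alpha : G/B \to G/P_\alpha$ restricted to a single $B$-orbit $\cO_w = BwB$. First I would identify $\pi_\alpha^{-1}\pi_\alpha(\cO_w)$ as the $B$-orbit structure on $BwP_\alpha/B$. Since $P_\alpha = B \cup B\dot s_\alpha B$, we have $wP_\alpha = wB \cup w\dot s_\alpha B$, so $BwP_\alpha = BwB \cup Bw\dot s_\alpha B$, and by the Bruhat decomposition $Bw\dot s_\alpha B = BwB$ or $B(ws_\alpha)B$ according to whether $\ell(ws_\alpha) < \ell(w)$ or $\ell(ws_\alpha) > \ell(w)$. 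Thus when $w \xrightarrow{s_\alpha} ws_\alpha$, the set $\pi_\alpha^{-1}\pi_\alpha(\cO_w) = \cO_w \cup \cO_{ws_\alpha}$ is a union of exactly two $B$-orbits, whereas when $\ell(ws_\alpha) < \ell(w)$ it equals $\cO_w \cup \cO_{ws_\alpha}$ as well but now with $\dim \cO_{ws_\alpha} < \dim \cO_w$.

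The second step is to determine which of the two orbits is dense in the fibre preimage, i.e. which one $\cO'$ satisfies $\dim \cO' = \dim \cO_w + 1$. By the corollary preceding the definition of $\alphato$, $\dim \pi_\alpha^{-1}\pi_\alpha(\cO_w)$ is either $\dim\cO_w$ or $\dim\cO_w + 1$; and it equals $\dim\cO_w + 1$ exactly when $\pi_\alpha$ restricted to $\cO_w$ is generically injective onto its image, i.e. when the generic fibre of $\cO_w \to \pi_\alpha(\cO_w)$ is a point rather than all of $\mathbb{P}^1$. The relevant computation is the standard one: $\dim BwB = \ell(w) + \dim B$ (or $\ell(w) + \dim\mfu$ as a quotient), so $\dim\cO_{ws_\alpha} = \dim\cO_w + 1$ iff $\ell(ws_\alpha) = \ell(w) + 1$. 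Hence in the case $w \xrightarrow{s_\alpha} ws_\alpha$ the dense orbit is $\cO_{ws_\alpha}$, giving $\cO_w \alphato \cO_{ws_\alpha}$; and in the case $\ell(ws_\alpha) < \ell(w)$ the dense orbit is $\cO_w$ itself, so $\cO_w \alphato \cO$ holds for no $\cO$ with source $\cO_w$. This establishes both directions: if $w \xrightarrow{s_\alpha} ws_\alpha$ then $\cO_w \alphato \cO_{ws_\alpha}$, and conversely if $\cO_w \alphato \cO$ then $\cO \neq \cO_w$ forces $\cO = \cO_{ws_\alpha}$ with $\ell(ws_\alpha) > \ell(w)$.

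The main obstacle is the dimension bookkeeping for the orbit of $ws_\alpha$ inside the $\mathbb{P}^1$-fibre and making rigorous the claim that $\dim \pi_\alpha^{-1}\pi_\alpha(\cO_w) = \dim\cO_w + 1$ precisely when $\ell(ws_\alpha) = \ell(w)+1$, rather than the geometric intuition alone. I would handle this by working with the explicit model: pick a representative so that $\pi_\alpha^{-1}\pi_\alpha(\cO_w)$ is covered by $BwB \cup Bw\dot s_\alpha B$, recall the standard fact (from the Bruhat decomposition, e.g. via the parabolic $P_\alpha$ containing $B$) that $\dim BwB = \dim B + \ell(w)$, and read off the dimensions directly. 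The case analysis on $\ell(ws_\alpha)$ versus $\ell(w)$ then immediately tells us which orbit has dimension one greater than $\cO_w$, and the lemma of Knop quoted above guarantees that the unique dense orbit is the one of maximal dimension, closing the argument. The only subtlety to flag is that $\pi_\alpha^{-1}\pi_\alpha(\cO_w)$ is a union of at most two orbits here (not three), which is consistent with the general bound and with the fact that for $\BGB$ the monoidal action on a $\mathbb{P}^1$-fibre never produces the "three orbit" configuration.
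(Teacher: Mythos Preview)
Your approach is essentially the paper's: identify $\pi_\alpha^{-1}\pi_\alpha(\cO_w)=BwP_\alpha/B=\cO_w\cup\cO_{ws_\alpha}$ and then compare dimensions to locate the dense orbit (the paper does the dimension step via the stabilizer $B\cap B_{\dot w}$ and the count $\ell(w)=\#\{\beta>0:w\beta<0\}$, while you invoke the equivalent formula $\dim BwB=\ell(w)+\dim B$). One harmless slip to fix: the sentence ``$Bw\dot s_\alpha B = BwB$ or $B(ws_\alpha)B$ according to whether $\ell(ws_\alpha)<\ell(w)$ or $\ell(ws_\alpha)>\ell(w)$'' is wrong as written, since $Bw\dot s_\alpha B = Bws_\alpha B$ always ($\dot s_\alpha$ is a representative of $s_\alpha\neq 1$); your very next sentence already records the correct decomposition $\cO_w\cup\cO_{ws_\alpha}$ in both length cases, so the argument is unaffected.
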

\begin{proof}
This is well-known, but we provide a discussion to illuminate the source of the 
similarities between the various double cosets for which we consider Bruhat 
order.
First, $\pi_\alpha^{-1} \pi_\alpha( B w B ) = B w P_\alpha / B
= B w B \cup B ws_\alpha B$ by the parabolic Bruhat 
decomposition (Proposition \ref{PGBBruhatDecomposition}) and by Lemma 8.3.7 
of \cite{S2}.

Recall that $\ell(w) = \# \{ \alpha \in \Delta^+ : w\alpha < 0 \}$.  
Under the correspondence $G/B \leftrightarrow \sB$ where $gB 
\mapsto gBg^{-1} = B_g$, we may view $B_{\dot{w}}$ as a point in the orbit 
$\cO_w$ and $B \cap B_{\dot{w}}$ as the stabilizer of that point.  If 
$\ell(ws_\alpha) = \ell(w) + 1$, then $\dim B \cap B_{\dot{w}} = \dim B \cap 
B_{\dot{w}\dot{s}_\alpha} + 1$.
Since the dimension of a $B$-orbit is the dimension of $B$ minus the dimension 
of the stabilizer of a point in the orbit, $\dim BwB + 1 = \dim B
ws_\alpha B$.
\end{proof}

\begin{remark}
The relationship between the dimension of an orbit and the dimension of the 
stabilizer of a point in the orbit is the source of the similarities between 
descriptions of simple Bruhat relations for our various settings in terms of 
Weyl group elements and positive roots.
\end{remark}

Observing now that the notion of reduced decomposition arising from viewing 
$W$ as a reflection group corresponds with definition 
\ref{ReducedDecomposition}, we conclude:
\begin{corollary}
Under the correspondence arising from the Bruhat decomposition (Proposition 
\ref{BGBBruhatDecomposition}), Bruhat order for $\BGB$ agrees with Bruhat 
order for $W$.
\end{corollary}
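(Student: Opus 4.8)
The plan is to derive this from the subexpression property proved above (applied with $H=B$) together with the classical subword characterization of Bruhat order on $W$, with the preceding theorem doing most of the translation. First I would identify, in Weyl-group terms, the data of Definition \ref{ReducedDecomposition}. The only closed $B$-orbit on $G/B$ is $\cO_e = B$: a closed orbit is complete and affine, hence a point, and the unique $B$-fixed point of $G/B$ is $eB$. By the preceding theorem, $\cO_w \alphato \cO$ holds exactly when $\cO = \cO_{ws_\alpha}$ with $\ell(ws_\alpha)=\ell(w)+1$. Hence a reduced decomposition $((\cO_{w_0},\dots,\cO_{w_k}),(\alpha_1,\dots,\alpha_k))$ of $\cO_v$ is precisely a sequence $e = w_0, w_1, \dots, w_k = v$ with $w_i = w_{i-1}s_{\alpha_i}$ and $\ell(w_i)=i$; that is, reduced decompositions of $\cO_v$ are in bijection with reduced words $s_{\alpha_1}\cdots s_{\alpha_k}$ of $v$.

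Next I would translate the notion of subexpression. The key point, already recorded in the proof of the preceding theorem, is that $\pi_\alpha^{-1}\pi_\alpha(BwB) = BwB \cup Bws_\alpha B$ is a union of only two $B$-orbits. Consequently clause (2) in the definition of subexpression (the existence of a common $\alpha$-successor of $u_{i-1}$ and $u_i$) forces $u_i = u_{i-1}$ and is subsumed by clause (1). A subexpression of the reduced decomposition attached to $s_{\alpha_1}\cdots s_{\alpha_k}$ is then nothing but a choice, for each $i$, of $u_i = u_{i-1}$ or of $u_i = u_{i-1}s_{\alpha_i}$ with $\ell(u_i)=\ell(u_{i-1})+1$; the positions where the second option is taken index a subword of $(s_{\alpha_1},\dots,s_{\alpha_k})$ all of whose partial products strictly increase in length, i.e. a reduced subword, and $u_k$ is its product. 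Conversely every reduced subword arises this way.

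Combining the two translations: by the subexpression property, $\cO_u \preceq^B \cO_v$ if and only if $u$ is the product of some reduced subword of some reduced word of $v$; and by the subword property for Bruhat order on $W$, using the deletion condition to pass freely between reduced and arbitrary subwords, this holds if and only if $u \le v$. That gives the claimed identification of the two orders. The step I expect to require the most care is the translation of the subexpression definition, namely verifying that clause (2) is vacuous in the $\BGB$ setting, so that ``subexpression'' really does reduce to ``reduced subword''; once that is in hand the remainder is bookkeeping. One could instead argue by induction on $\ell(v)$, matching covering relations on the two sides via the dimension-of-orbit-versus-dimension-of-stabilizer computation from the preceding theorem, but the route through the subexpression property is shorter given what has already been established.
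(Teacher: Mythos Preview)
Your proposal is correct and follows the same route the paper takes: the paper's entire justification is the single sentence preceding the corollary, noting that reduced decompositions in the sense of Definition~\ref{ReducedDecomposition} correspond to ordinary reduced expressions in $W$, and then implicitly invoking the subexpression property of Section~\ref{Reduction} together with the classical subword characterization. You have simply spelled out the details the paper leaves implicit, in particular the verification that the unique closed $B$-orbit is $\cO_e$ and that clause~(2) of the subexpression definition collapses into clause~(1) because $\pi_\alpha^{-1}\pi_\alpha(BwB)$ contains only two $B$-orbits.
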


We now proceed to reformulate simple relations for Bruhat order.  We find 
that our reformulations apply not only to simple relations.

\subsection{Weyl Group and Roots}
Another means of describing (not necessarily simple) Bruhat relations is:
\begin{theorem} \label{BGBBruhat} 
Let $\alpha$ be a positive root and $w \in W$.  Then:
\begin{eqnarray*}
w & \xrightarrow{s_\alpha} & ws_\alpha \text{ if } w\alpha \in \Delta^+ = 
\Delta( \mfu, \mft ) \\
w & \xleftarrow{s_\alpha}& ws_\alpha \text{ if } w\alpha \in \Delta^- = 
\Delta( \mfu^-, \mft ).
\end{eqnarray*}
\end{theorem}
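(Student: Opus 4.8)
The two displayed lines assert, unwinding the definition of $u \xrightarrow{s_\alpha} v$ (namely $v = us_\alpha$ with $\ell(u) < \ell(v)$) and the identifications $\Delta^+ = \Delta(\mfu,\mft)$, $\Delta^- = \Delta(\mfu^-,\mft)$, exactly that $w\alpha \in \Delta^+ \Rightarrow \ell(ws_\alpha) > \ell(w)$ and $w\alpha \in \Delta^- \Rightarrow \ell(ws_\alpha) < \ell(w)$, so the plan is to prove these two implications. The first follows from the second: if $w\alpha \in \Delta^+$, then $(ws_\alpha)\alpha = -w\alpha \in \Delta^-$, and the second implication applied to $ws_\alpha$ gives $\ell(w) = \ell((ws_\alpha)s_\alpha) < \ell(ws_\alpha)$. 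Since every root lies in exactly one of $\Delta^+$, $\Delta^-$, the two implications together also force $\ell(ws_\alpha) \ne \ell(w)$ always, so they are really equivalences. Thus it suffices to prove: $w\alpha \in \Delta^-$ implies $\ell(ws_\alpha) < \ell(w)$. (Unlike the case of simple $\alpha$ handled earlier in this section, the length need not drop by exactly one here.)

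For that I would run the standard reduced-word manipulation (essentially the strong exchange condition). Fix a reduced expression $w = s_{i_1}\cdots s_{i_\ell}$ with $\ell = \ell(w)$, and for $j = 0, 1, \ldots, \ell$ set $\beta_j = (s_{i_{j+1}}\cdots s_{i_\ell})\alpha$, so that $\beta_\ell = \alpha \in \Delta^+$ while $\beta_0 = w\alpha \in \Delta^-$. The sign must change somewhere along the sequence, so there is an index $j \in \{1,\ldots,\ell\}$ with $\beta_j \in \Delta^+$ but $\beta_{j-1} \in \Delta^-$; as $\beta_{j-1} = s_{i_j}\beta_j$ and a positive root carried to a negative root by the simple reflection $s_{i_j}$ must be $\alpha_{i_j}$, we get $\beta_j = \alpha_{i_j}$. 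Solving, $\alpha = (s_{i_\ell}\cdots s_{i_{j+1}})\alpha_{i_j}$, hence $s_\alpha = (s_{i_\ell}\cdots s_{i_{j+1}})\,s_{i_j}\,(s_{i_{j+1}}\cdots s_{i_\ell})$. Substituting this into $ws_\alpha$ and telescoping the product $s_{i_1}\cdots s_{i_\ell}\cdot s_{i_\ell}\cdots s_{i_{j+1}}$ down to $s_{i_1}\cdots s_{i_j}$ gives $ws_\alpha = s_{i_1}\cdots s_{i_{j-1}}\,s_{i_{j+1}}\cdots s_{i_\ell}$, a product of $\ell - 1$ simple reflections, so $\ell(ws_\alpha) \le \ell - 1 < \ell(w)$.

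The one part needing genuine care is the bookkeeping in that last paragraph: locating the sign-change index $j$, justifying $\beta_j = \alpha_{i_j}$, and verifying the telescoping cancellation, which exhibits $ws_\alpha$ as the chosen reduced word for $w$ with its $j$-th letter deleted. Everything else — reducing the positive-root case to the negative-root case, and translating back into the arrow-and-nilradical notation of the statement — is purely formal. As an orientation check, taking $w = w_0$ the claim reads $\ell(w_0 s_\alpha) < \ell(w_0)$ for every positive root $\alpha$, which is immediate from $\ell(w_0 v) = \ell(w_0) - \ell(v)$.
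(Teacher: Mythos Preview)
Your argument is correct and is essentially the same reduced-word/strong-exchange manipulation the paper uses. The only structural differences are that the paper works in the contrapositive direction (assuming $\ell(w)<\ell(ws_\alpha)$, taking a reduced expression for $ws_\alpha$, invoking the strong exchange condition as a black box, and deducing $w\alpha>0$), whereas you assume $w\alpha<0$, take a reduced expression for $w$, and carry out the sign-change argument explicitly to exhibit $ws_\alpha$ as a deleted word; your version is thus slightly more self-contained while the paper's is slightly terser.
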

\begin{proof}
This is known if $\alpha$ is a simple root (\cite{H}, Lemma 1.6).  In general, 
suppose $\ell(w) < \ell(ws_\alpha)$.  Let $w s_\alpha = s_1 s_2 \cdots s_r$ 
be a reduced expression for $ws_\alpha$.  By the strong exchange condition,
$w = s_1 s_2 \cdots \hat{s}_i \cdots s_r$.  Then $s_\alpha = w^{-1} s_1 s_2 
\cdots s_r = s_r s_{r-1} \cdots s_{i+1} s_i s_{i+1} \cdots s_{r-1} s_r$.
We conclude that $\alpha = s_r s_{r-1} \cdots s_{i+1} \alpha_i$.  (Note that 
since $s_1 s_2 \cdots s_r$ is a reduced expression, 
$\alpha = s_r s_{r-1} \cdots s_{i+1} \alpha_i$ is a positive root, 
\cite{H} p. 14.)  Then 
$w\alpha = s_1 \cdots \hat{s}_i \cdots s_r s_r \cdots s_{i+1} \alpha_i
= s_1 \cdots s_{i-1} \alpha_i > 0$ since $s_1 s_2 \cdots s_r$ is a reduced 
expression.

Similarly, if $\ell(w) > \ell(w s_\alpha)$, then $w\alpha < 0$.
\end{proof}

\subsection{Roots and Pullbacks}
Let $\Delta = \Delta( \g, \mft )$ and $\Delta^+ = \Delta( \mfb, \mft)$.
Recall the notation of section \ref{Notation}: $B = TU  \stackrel{\Int(g)}
{\longmapsto} B_g = T_g U_g$.
Considering Lie algebras, $\mfb = \mft \oplus \mfu \stackrel{\Ad(g)}
{\longmapsto} \mfb_g = \mft_g \oplus \mfu_g$.
We have the map between Cartan subalgebras $\Ad(g^{-1}) : \mft_g \to \mft$,
while pullback allows us to map between duals of Cartan subalgebras:
$\Ad(g^{-1})^* : \mft^* \to \mft_g^*$:
        $$(\Ad(g^{-1})^* \alpha)(t) = \alpha( \Ad(g^{-1}) t ) \quad \text{for
        all }t \in \mft_g.$$
Letting $\alpha_g = \Ad(g^{-1})^*\alpha$, it is easy to see that
\begin{eqnarray*}
        \Ad(g) : \mfu = \bigoplus_{\alpha \in \Delta^+} \g_\alpha 
        &\to& \mfu_g = \bigoplus_{\alpha \in \Delta^+} \g_{\alpha_g} \\
\text{with} \qquad \Ad(g) \g_\alpha &=& \g_{\alpha_g}.
\end{eqnarray*}

It is straightforward to prove (use $\Int$ rather than $\Ad$):
\begin{lemma}
For $w \in W$ and $n \in N(T)$ any representative of $w$, 
$ w \alpha = \alpha_n.$  In particular, $w \alpha = \alpha_{\dot{w}}$.
\end{lemma}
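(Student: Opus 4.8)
The plan is to unwind the definitions and observe that the pullback $\alpha_n$ depends only on the coset $w = nT$, at which point it must agree with the standard contragredient action of $W$ on $\mft^*$.

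First I would check well-definedness. If $n, n' \in N(T)$ both represent $w$, then $n' = nt$ for some $t \in T$, and $T_n = T_{n'} = T$ since both normalize $T$, so $\mft_n = \mft_{n'} = \mft$. Now $\Ad((n')^{-1}) = \Ad(t^{-1}) \circ \Ad(n^{-1})$ as automorphisms of $\g$, and since $T$ is abelian, $\Ad(t^{-1})$ restricts to the identity on $\mft$; hence $\Ad((n')^{-1})$ and $\Ad(n^{-1})$ agree on $\mft$. Therefore $\alpha_{n'} = \Ad((n')^{-1})^*\alpha = \Ad(n^{-1})^*\alpha = \alpha_n$, so the assignment $n \mapsto \alpha_n$ factors through a well-defined map $W \to \mft^*$.

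Next I would identify this map with the Weyl group action. By definition $W = N(T)/T$ acts on $\mft$ through the adjoint action of any representative — well-defined by the same triviality of $\Ad(T)|_\mft$ — and on $\mft^*$ by the contragredient: for $Y \in \mft$, $(w\alpha)(Y) = \alpha(\Ad(n^{-1})Y)$. The right-hand side is exactly $(\Ad(n^{-1})^*\alpha)(Y) = \alpha_n(Y)$ by the definition of the pullback recalled just before the lemma. Hence $w\alpha = \alpha_n$. (Equivalently, at the group level with $\Int$ in place of $\Ad$: the character $\alpha$ of $T$ pulls back along $\Int(n^{-1}) : t \mapsto n^{-1}tn$ to the character one calls $\alpha_n$, whose root space is $\Ad(n)\g_\alpha = \g_{\alpha_n}$ as in the displayed formula preceding the lemma; and this pulled-back character is by the very definition of the $W$-action the image of $\alpha$ under $w$.) The ``in particular'' is then immediate, since $\dot{w}$ is, by its construction in Section \ref{Notation}, a particular element of $N(T)$ representing $w$: apply the general statement with $n = \dot{w}$.

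I expect no genuine obstacle here; the only point requiring care is the well-definedness step, i.e.\ that the adjoint action of $T$ on $\mft$ is trivial, which is precisely the reason the formula sees only the class $w$ and not the chosen representative. Everything else is a direct comparison of two descriptions of the $W$-action on $\mft^*$.
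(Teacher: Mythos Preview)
Your proof is correct and is essentially a fleshed-out version of the paper's own ``proof,'' which consists only of the remark that the lemma is straightforward and the hint to use $\Int$ rather than $\Ad$. You carry out precisely that verification (and even include the $\Int$-level reformulation parenthetically), so there is nothing to add.
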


Using pullbacks, Bruhat order may be reformulated as follows:
\begin{proposition} 
Let $\alpha$ be a positive root and $w \in W$.  Then:
\begin{eqnarray*}
w & \xrightarrow{s_\alpha}& ws_\alpha \text{ if } \alpha_{\dot{w}} 
\in \Delta( \mfu, \mft ) \\
w & \xleftarrow{s_\alpha}& ws_\alpha \text{ if } \alpha_{\dot{w}} \in 
\Delta( \mfu^-, \mft ).
\end{eqnarray*}
This may also be written
\begin{eqnarray*}
B w B & \alphato & Bws_\alpha B
\text{ if } \alpha_{\dot{w}} \in \Delta( \mfu, \mft ) \\
B w B & \alphafrom & Bws_\alpha B \text{ if } 
\alpha_{\dot{w}} \in \Delta( \mfu^-, \mft ).
\end{eqnarray*}
\end{proposition}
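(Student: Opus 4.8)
The plan is to obtain this Proposition as an immediate reformulation of Theorem~\ref{BGBBruhat}, translated through the pullback dictionary established in this subsection. Recall that Theorem~\ref{BGBBruhat} asserts, for a positive root $\alpha$ and $w \in W$, that $w \xrightarrow{s_\alpha} ws_\alpha$ precisely when $w\alpha \in \Delta^+ = \Delta(\mfu,\mft)$, and $w \xleftarrow{s_\alpha} ws_\alpha$ precisely when $w\alpha \in \Delta^- = \Delta(\mfu^-,\mft)$; since $w$ permutes $\Delta$, exactly one of these two alternatives holds, so there is nothing further to check beyond rewriting the hypothesis.

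First I would invoke the Lemma stating that $w\alpha = \alpha_n$ for any representative $n \in N(T)$ of $w$, and in particular $w\alpha = \alpha_{\dot w}$. Substituting $\alpha_{\dot w}$ for $w\alpha$ in the two conditions of Theorem~\ref{BGBBruhat} is then a literal replacement, and the first displayed pair of formulas is exactly Theorem~\ref{BGBBruhat} rewritten. Implicitly this also uses $\Ad(\dot w)\g_\alpha = \g_{\alpha_{\dot w}}$ together with the conventions $\Delta(\mfu,\mft) = \Delta^+$ and $\Delta(\mfu^-,\mft) = \Delta^-$ fixed at the start of this subsection, so that membership of $w\alpha$ in $\Delta(\mfu,\mft)$ (resp.\ $\Delta(\mfu^-,\mft)$) and membership of $\alpha_{\dot w}$ in the same set express the same condition.

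For the second pair of formulas I would pass from the Weyl group to $\BGB$. By Proposition~\ref{BGBBruhatDecomposition} the bijection $w \leftrightarrow BwB$ carries Bruhat order on $W$ to closure order on $\BGB$, and when $\alpha$ is a simple root the Theorem characterizing $\cO_w \alphato \cO$ identifies the covering relation $BwB \alphato Bws_\alpha B$ with $w \xrightarrow{s_\alpha} ws_\alpha$. Hence $BwB \alphato Bws_\alpha B$ when $\alpha_{\dot w} \in \Delta(\mfu,\mft)$ and $BwB \alphafrom Bws_\alpha B$ when $\alpha_{\dot w} \in \Delta(\mfu^-,\mft)$ are simply the first pair of formulas restated for double cosets; for a general positive root $\alpha$ one reads $\alphato$ and $\alphafrom$ as the (not necessarily simple) Bruhat relation between the two cosets under the same dictionary.

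I do not expect a genuine obstacle: the argument is bookkeeping once Theorem~\ref{BGBBruhat} and the pullback Lemma are available. The one point that demands attention is that $\alpha_{\dot w} = \Ad(\dot w^{-1})^*\alpha$ appears to depend on the chosen representative $\dot w$ of $w$, so one must cite the Lemma to know that it depends only on $w$ (and in fact equals $w\alpha$); a secondary, purely notational check is that the sign conventions for $\mfu$ and $\mfu^-$ used here agree with those appearing in the statement of Theorem~\ref{BGBBruhat}.
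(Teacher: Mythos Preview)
Your proposal is correct and matches the paper's approach exactly: the paper presents this Proposition as an immediate reformulation of Theorem~\ref{BGBBruhat} via the Lemma $w\alpha = \alpha_{\dot w}$, without giving any further argument. Your more careful accounting of the dictionary between $W$ and $\BGB$ and of the representative-independence of $\alpha_{\dot w}$ simply makes explicit what the paper leaves implicit.
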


\begin{remark}
Pullbacks turn out to be particularly useful in studying Bruhat order in
more general situations: for example, if one of the subgroups with respect to 
which you take double cosets is twisted by conjugation or if that subgroup is 
a more general spherical subgroup.  See \cite{PSY} for details.
\end{remark}

\subsection{Cross Actions}
\begin{definition}
The cross action of $W$ on $B \backslash G / B$ is the action generated by
$$s_\alpha \times B \dot{w} B := B \dot{w} \dot{s}_\alpha^{-1}B$$
where $\alpha$ is a simple root.
\end{definition}
Under the correspondence between $\BGB$ and $W$, cross action corresponds to 
the natural left action of $W$ on itself by right multiplication by the 
inverse.

It follows immediately:
\begin{theorem}
Let $\alpha$ be a positive root and $w \in W$.  Then:
\begin{eqnarray*}
B {w} B & \alphato & s_\alpha \times B {w} B = 
B {w}{s}_\alpha^{-1} B = B{w}{s}_\alpha B 
\text{ if } w\alpha \in \Delta( \mfu, \mft ) \\
B {w} B & \alphafrom & s_\alpha \times B {w} B = 
B {w}{s}_\alpha^{-1} B = B {w}{s}_\alpha B 
\text{ if } w\alpha \in \Delta( \mfu^-, \mft ). 
\end{eqnarray*}
\end{theorem}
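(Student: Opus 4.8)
The plan is to deduce this immediately from Theorem~\ref{BGBBruhat} once the cross action has been translated back to the Weyl group. First I would unwind the definition of the cross action. For a simple root $\alpha$ it is given by $s_\alpha \times B\dot{w}B = B\dot{w}\dot{s}_\alpha^{-1}B$; since $\dot{s}_\alpha^{-1} \in N(T)$ represents $s_\alpha$ (indeed $\dot{s}_\alpha^{-1} \in \dot{s}_\alpha T$, because $\dot{s}_\alpha^2 = \alpha^\vee(-1) \in T$), this double coset equals $Bws_\alpha B$, and because $s_\alpha$ is an involution it may also be written $Bws_\alpha^{-1}B$. For a general positive root $\alpha$ the reflection $s_\alpha$ lies in $W$, and since the cross action corresponds under $\BGB \leftrightarrow W$ to the action of $W$ on itself by right multiplication by the inverse, we again get $s_\alpha \times BwB = Bws_\alpha^{-1}B = Bws_\alpha B$. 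This establishes all the equalities appearing in the two displayed lines.

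It then remains only to identify the arrows. By definition $BwB \alphato Bws_\alpha B$ records that $Bws_\alpha B$ is the strictly larger of the two orbits, i.e. $w \xrightarrow{s_\alpha} ws_\alpha$, which is precisely the content of the first line of Theorem~\ref{BGBBruhat}: this holds exactly when $w\alpha \in \Delta(\mfu,\mft)$. Likewise $BwB \alphafrom Bws_\alpha B$ means $ws_\alpha \xrightarrow{s_\alpha} w$, i.e. $w\alpha \in \Delta(\mfu^-,\mft)$, which is the second line of Theorem~\ref{BGBBruhat} (equivalently, apply the first line with $ws_\alpha$ in place of $w$, noting $(ws_\alpha)\alpha = -w\alpha$). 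Assembling these two cases gives the theorem.

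I do not expect any real obstacle here; the only small point worth spelling out is the compatibility of the two uses of the symbol $\alphato$. For simple $\alpha$ it is the geometric relation attached to the $\bbP^1$-fibration $\pi_\alpha$, and one checks $\pi_\alpha^{-1}\pi_\alpha(BwB) = BwP_\alpha/B = BwB \cup Bws_\alpha B$, so the two orbits in the fibre are exactly $BwB$ and $Bws_\alpha B$ and the relation is determined by dimension, as used above. For non-simple positive $\alpha$ the symbols $\alphato$ and $\alphafrom$ are being used, as in the preceding proposition, purely as shorthand for the covering relations $w \xrightarrow{s_\alpha} ws_\alpha$ and $ws_\alpha \xrightarrow{s_\alpha} w$, so no geometry enters and the argument is a formal rewriting. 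The purpose of this theorem is simply to record the cross-action phrasing of the (simple and general) Bruhat relations, so that it can be compared with the analogous statements to be proved for $\PGB$ and $\KGB$.
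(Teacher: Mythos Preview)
Your proposal is correct and matches the paper's approach exactly: the paper records this theorem with the single sentence ``It follows immediately,'' meaning precisely the combination of the cross-action definition with Theorem~\ref{BGBBruhat} that you have spelled out. Your additional remark clarifying how $\alphato$ is being used for non-simple $\alpha$ (as shorthand for the Bruhat relation rather than via a $\bbP^1$-fibration) is a reasonable elaboration of something the paper leaves implicit in the preceding proposition.
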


\section{Bruhat Order on $P \backslash G / B$} \label{PGB}
\subsection{Equivalence of Closure Order with Bruhat Order for $W_L \backslash 
W_G$}
It is well-known that:
\begin{proposition} \label{PGBBruhatDecomposition}
(\cite{DM} Proposition 1.6, \cite{H3} Lemma 8.3.7)
Bruhat Decomposition:
$$P = \amalg_{w \in W_L} B \dot{w} B = \amalg_{w \in W_L} BwB $$
and thus $\PGB \leftrightarrow W_L \backslash W_G$.  Furthermore,
$$P w B = \amalg_{x \in W_L} B xw B.$$
\end{proposition}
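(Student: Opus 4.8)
The plan is to reduce everything to two instances of the Bruhat decomposition: the one for $G$ itself (Proposition \ref{BGBBruhatDecomposition}) and the same statement applied to the reductive Levi factor $L$, whose Borel subgroup is $B_L := B \cap L$ and whose Weyl group is $W_L$. First I would prove $P = \amalg_{w \in W_L} B\dot w B = \amalg_{w\in W_L} BwB$. Write $P = LN$ with $N$ the unipotent radical, so that $B = B_L N$ with $N$ normal in $P$ and $L$ (hence $B_L$) normalizing $N$. Since $\dot w$ lies in $L$ for $w \in W_L$, it normalizes $N$, so $B\dot w B = B_L N\,\dot w\, B_L N = B_L \dot w B_L N$. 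The Bruhat decomposition of $L$ gives $L = \amalg_{w\in W_L} B_L \dot w B_L$; multiplying by $N$ on the right yields $\amalg_{w\in W_L} B\dot w B = LN = P$. Disjointness is inherited: applying the quotient map $P \to P/N \cong L$ carries $B\dot w B$ to $B_L\dot w B_L$, and these are disjoint in $L$ for distinct $w$. The identity $B\dot w B = BwB$ is just the definition of the double coset.

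For the ``furthermore'' clause I would use the first part in the form $P = \bigcup_{x\in W_L} BxB$, so that $PwB = \bigcup_{x\in W_L} BxBwB$, and then establish $\bigcup_{x\in W_L} BxBwB = \bigcup_{x\in W_L} BxwB$ by two inclusions. The inclusion $\supseteq$ is immediate: $xw \in BxB\cdot wB \subseteq BxBwB$, hence $BxwB \subseteq BxBwB$. For $\subseteq$ I would induct on $\ell(x)$; peeling off a left simple reflection $s_\alpha$ (with $\alpha \in I$) so that $x = s_\alpha x'$, $\ell(x') = \ell(x)-1$, $x' \in W_L$, and $BxB = Bs_\alpha B \cdot Bx'B$, I get $BxBwB = Bs_\alpha B\,(Bx'BwB)$, which by induction lies in $Bs_\alpha B\bigcup_{x''\in W_L}Bx''wB$. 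The standard rule $Bs_\alpha B\cdot Bx''wB \subseteq Bs_\alpha x''wB \cup Bx''wB$, together with the fact that $W_L$ is stable under left multiplication by $s_\alpha$, keeps every term inside $\bigcup_{x''\in W_L}Bx''wB$. Finally, $\amalg_{x\in W_L}BxwB$ is a genuine disjoint union because $x\mapsto xw$ is injective, so the elements $xw$ are distinct and the Bruhat decomposition of $G$ separates their double cosets.

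The bijection $\PGB \leftrightarrow W_L\backslash W_G$ then drops out. For $g \in G$, writing $g \in BwB$ for the unique $w\in W_G$ gives $PgB = P(BwB)B = PwB$, so every $P$--$B$ double coset has the form $PwB$; and $PwB = Pw'B$ if and only if $W_L w = W_L w'$. The direction $\Leftarrow$ is clear since $P(xw)B = PwB$ for $x\in W_L$, and $\Rightarrow$ follows by comparing $\amalg_{x\in W_L}BxwB = \amalg_{x\in W_L}Bxw'B$, which by uniqueness in $G$'s Bruhat decomposition forces $\{xw : x\in W_L\} = \{xw' : x\in W_L\}$, i.e.\ $W_L w = W_L w'$.

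The one place I expect friction is the induction in the second paragraph. It is tempting to write $BxBwB = BxwB$ directly, but that requires $\ell(xw) = \ell(x) + \ell(w)$, which holds only when $w$ is the minimal-length representative of its coset $W_L w$; for an arbitrary $w$ one must argue purely with the set-theoretic multiplication rule $BsBwB \subseteq BswB \cup BwB$ and the closure of $W_L$. Once that bookkeeping is nailed down, everything else is formal.
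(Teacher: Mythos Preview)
Your argument is correct. The decomposition $P = \amalg_{w\in W_L} B\dot w B$ via the Bruhat decomposition of $L$ and the quotient $P\to P/N\cong L$ is standard and clean; the induction for $PwB = \amalg_{x\in W_L} BxwB$ is the right way to handle an arbitrary representative $w$, and you correctly flagged that one cannot simply write $BxBwB = BxwB$ without the length-additivity hypothesis. The only step worth stating explicitly is that for $x\in W_L$ with $\ell(x)>0$ one can always peel off a \emph{left} simple reflection $s_\alpha$ with $\alpha\in I$; this uses the (standard) fact that $W_L$ is a parabolic subgroup, so length in $W_L$ with respect to $\{s_\alpha:\alpha\in I\}$ agrees with length in $W_G$.

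As for comparison: the paper does not give its own proof of this proposition. It is recorded as a known result with citations to \cite{DM} Proposition~1.6 and \cite{H3} Lemma~8.3.7, and is used as input elsewhere (e.g.\ in the proof after Remark~\ref{RemarkAlphaTo} and in the argument following Proposition~\ref{PGBBGBEquiv}). So there is no in-paper argument to compare yours against; what you have written is a perfectly good self-contained justification along the lines of those references.
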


The commonly used definition for Bruhat order on $W_L \backslash W_G$ viewed 
as a reflection group quotient is this:
\begin{definition}
Bruhat order on $W_L \backslash W_G$ is the partial order induced from 
Bruhat order on $W_G$.  That is, $W_L u \leq W_L v$ if there 
are coset representatives $u_0$ and $v_0$, respectively, such that $u_0 
\leq v_0$.
\end{definition}

That is, if $u, v \in W_G$ are such that $u \leq v$, then $W_L u \leq W_L v$.
The converse holds for minimal length coset representatives:
\begin{proposition} \label{PGBBGBEquiv}
Let $u, v \in W_G$ be minimal length coset representatives for $W_L u$ and 
for $W_L v$.  Then
$$W_L u \leq W_L v \iff u \leq v.$$
\end{proposition}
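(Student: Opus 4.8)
The plan is to prove the two implications separately. The implication $u \le v \Rightarrow W_L u \le W_L v$ is immediate from the definition of Bruhat order on $W_L \backslash W_G$, since the representatives $u$ and $v$ themselves witness it. All of the content is therefore in the reverse implication, and the strategy---consistent with the philosophy of this paper---is to reduce it to a single fact about Bruhat order on $W_G$ itself.

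Two standard observations set up the reduction. First, since $u$ is a minimal-length representative, every representative of $W_L u$ has the form $w_1 u$ with $w_1 \in W_L$ and $\ell(w_1 u) = \ell(w_1) + \ell(u)$; concatenating reduced words exhibits $u$ as a subword of $w_1 u$, so $u \le w_1 u$. Hence, if $W_L u \le W_L v$, there are representatives $u_0 = w_1 u$ and $v_0 \in W_L v$ with $u_0 \le v_0$, and consequently $u \le u_0 \le v_0$. It remains only to pass from $u \le v_0$, for $v_0$ an arbitrary element of $W_L v$, to $u \le v$.

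This is the heart of the matter, and I would isolate it as a claim: if $a$ is the minimal-length representative of $W_L a$, if $a \le b$, and if $v$ is the minimal-length representative of $W_L b$, then $a \le v$. Granting the claim, apply it with $a = u$ and $b = v_0$---whose minimal representative is $v$, since $v_0 \in W_L v$ and $v$ is minimal---to obtain $u \le v$. I would prove the claim by induction on $\ell(b)$. If $b$ is already minimal in $W_L b$ then $b = v$ and there is nothing to prove. Otherwise there is a simple reflection $s$ of $W_G$ lying in $W_L$ with $\ell(sb) < \ell(b)$ (take $s = s_\alpha$ for a simple root $\alpha \in I$ with $b^{-1}\alpha < 0$), while $\ell(sa) > \ell(a)$ because $a$ is a minimal representative. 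The lifting property of Bruhat order on $W_G$---equivalent to property $Z$ for $\BGB$ from Section \ref{Reduction}, read for left multiplication via the order-preserving involution $w \mapsto w^{-1}$---then yields $a \le sb$. Since $sb \in W_L b$ has strictly smaller length and the same minimal representative $v$, the inductive hypothesis gives $a \le v$.

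I expect the only subtle point to be the invocation of the lifting property inside the claim: one must confirm that $s$ is a genuine simple reflection of $W_G$ that happens to lie in $W_L$, and that from $\ell(sa) > \ell(a)$ and $\ell(sb) < \ell(b)$ the conclusion to extract is $a \le sb$ (rather than $sa \le b$ or $sa \le sb$). The remaining ingredients---the subword inequality $u \le w_1 u$, the existence of the descent $s$, and the bookkeeping of the induction---are routine. If one prefers a topological derivation, the same reduction is available: Proposition \ref{PGBBruhatDecomposition} gives $\overline{PvB} = \amalg\{\, BwB : w \le xv \text{ for some } x \in W_L\,\}$, so $PuB \subseteq \overline{PvB}$ iff $u \le xv$ for some $x \in W_L$, and the claim then finishes the argument exactly as before.
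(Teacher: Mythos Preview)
Your argument is correct. The paper's own proof consists only of a citation---it says the proposition is a special case of Lemma~3.5 of \cite{D2}---together with a remark that one may alternatively adapt the topological proof of Proposition~\ref{KGPKGBEquiv} (using maximal rather than minimal representatives). What you have written is essentially a self-contained reconstruction of Deodhar's argument: reduce to the claim that if a minimal representative $a$ satisfies $a \le b$ then $a \le v$ for $v$ the minimal representative of $W_L b$, and prove the claim by descending induction on $\ell(b)$ via the lifting property (property~$Z$ for left multiplication). Your topological aside at the end also mirrors the paper's alternative suggestion. So the approach is the same as the one the paper invokes; you have simply supplied the details that the paper outsources to the citation.
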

\begin{proof}
This is a special case of \cite{D2}, Lemma 3.5.
For a purely topological proof using maximal length coset representatives, 
adapt the proof of Proposition \ref{KGPKGBEquiv}.
\end{proof}

Because closure order for $\BGB$ corresponds to Bruhat order for $W_G$, by 
the Bruhat decomposition applied to $P w B$,
closure order for $\PGB$ corresponds to Bruhat order for $W_L \backslash W_G$:
\begin{theorem}
Closure order on $\PGB$ and Bruhat order on $W_L \backslash W_G$ 
correspond.  For $u, v \in W_G$ minimal length coset representatives of 
$W_L u$ and $W_L v$:
$$ \cO_u \preceq^P \cO_v \iff W_L u \leq W_L v.$$
Let $w \in W_G$ and let $\alpha \in \Delta^+( \g, \mft )$.  Using the 
nomenclature of Casian-Collingwood from \cite{CC} for cases which may be shown 
to correspond,
\begin{eqnarray*}
P w B &  = & 
Pw{s}_\alpha B \text{ if } w\alpha \in \Delta( \mfl, \mft ) 
\qquad \text{i.e. } \alpha \text{ is Levi type}\\
P {w} B & \preceq_\alpha^P & 
P{w}{s}_\alpha B \text{ if } w\alpha \in \Delta( \mfn, \mft ) 
\qquad \text{i.e. } \alpha \text{ is complex upward}\\
P {w} B & \succeq_\alpha^P & 
P {w}{s}_\alpha B \text{ if } w\alpha \in \Delta( \mfn^-, \mft )
\qquad \text{i.e. } \alpha \text{ is complex downward.}\\
\end{eqnarray*}
\end{theorem}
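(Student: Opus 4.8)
The plan is to reduce the entire theorem to facts already established: the Bruhat decomposition for $\PGB$ (Proposition \ref{PGBBruhatDecomposition}), the equivalence of closure order with Weyl-group Bruhat order for $\BGB$ (the corollary following Proposition \ref{BGBBruhatDecomposition}), and the classification of simple relations $\cO_w \alphato \cO$ for $\BGB$. The first assertion—$\cO_u \preceq^P \cO_v \iff W_L u \leq W_L v$ for minimal length representatives—follows by combining Proposition \ref{PGBBGBEquiv} with a topological argument. Concretely, $\cO_u = PuB = \amalg_{x \in W_L} BxuB$, so $\overline{\cO_u} = \overline{\bigcup_{x \in W_L} BxuB} = \bigcup_{x \in W_L} \overline{BxuB}$ (a finite union). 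Hence $\cO_v \subset \overline{\cO_u}$ iff $BvB \subset \overline{BxuB}$ for some $x \in W_L$, i.e. iff $v \leq xu$ in $W_G$ for some $x \in W_L$, i.e. iff $W_L v \leq W_L u$. (Here I am using that $PuB$ is a single $P$-orbit, which is clear since $P \supset B$.) Wait—I need to be careful about direction: I would phrase it as $\cO_u \preceq^P \cO_v$ meaning $\cO_u \subset \overline{\cO_v}$, so $\cO_u \preceq^P \cO_v$ iff $BuB \subset \overline{BxvB}$ for some $x \in W_L$, iff $u \leq xv$ for some $x$; since $u$ is a minimal coset representative, by Proposition \ref{PGBBGBEquiv} this is equivalent to $W_L u \leq W_L v$. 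So the first displayed equivalence is essentially a restatement of already-proven material.

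For the second part—the combinatorial description of simple relations—I would argue as follows. Fix $w \in W_G$ and a simple root $\alpha$; recall that the simple relation $\cO \alphato \cO'$ for $\PGB$ is defined via the $\bbP^1$-fibration $\pi_\alpha$, and that $\pi_\alpha^{-1}\pi_\alpha(PwB) = PwP_\alpha/B = PwB \cup Pws_\alpha B$ by the parabolic Bruhat decomposition and Lemma 8.3.7 of \cite{S2} (exactly as in the $\BGB$ case). The question is whether $PwB = Pws_\alpha B$ (equivalently $W_L w = W_L w s_\alpha$) or whether they are distinct, and in the latter case which has larger dimension. Now $W_L w = W_L w s_\alpha$ holds iff $w s_\alpha w^{-1} \in W_L$, i.e. iff the reflection $s_{w\alpha} \in W_L$, i.e. iff $w\alpha \in \Delta(\mfl, \mft)$—this is the "Levi type" case where the fibre is a single orbit. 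Otherwise the fibre is exactly two $P$-orbits, and the larger one is the dense one. To determine which is dense, I would pass to a point: realizing $B_{\dot w} \in \cO_w$ and comparing $\dim PwB$ with $\dim Pws_\alpha B$ via stabilizer dimensions, exactly as in the proof of Theorem \ref{BGBBruhat} but now with $P$ in place of $B$. The inequality $\ell(ws_\alpha) = \ell(w)+1 \iff w\alpha > 0$ translates, after checking that the relevant stabilizer-dimension comparison is governed by whether $w\alpha \in \Delta(\mfn,\mft)$ versus $\Delta(\mfn^-,\mft)$, into the stated trichotomy.

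The main obstacle I anticipate is the dimension comparison in the non-Levi case: one must show cleanly that $\dim PwB + 1 = \dim Pws_\alpha B$ precisely when $w\alpha \in \Delta(\mfn,\mft)$. The cheapest route is probably to \emph{avoid} a direct stabilizer computation and instead argue via the $\BGB$ result already in hand: take $u$ the minimal length representative of $W_L w$, so that $PwB = PuB$ and (writing things in terms of $u$) the relation $w\alpha \in \Delta(\mfn,\mft)$ versus $\Delta(\mfn^-,\mft)$ can be checked to be independent of the coset representative. Then one shows $PuB \alphato Pus_\alpha B$ using that $BuB \alphato Bus_\alpha B$ in $\BGB$ (which holds since $u$ minimal forces $\ell(us_\alpha) > \ell(u)$ when $s_\alpha$ is not in the parabolic descent set), and that the projection $\PGB \to \cdots$ is order-preserving by the first part of the theorem, so dense orbits map to dense orbits. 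Matching this against the "complex upward/downward" dichotomy of Casian–Collingwood \cite{CC} is then a bookkeeping check: $\alpha$ complex upward means precisely $w\alpha \in \Delta(\mfn,\mft)$, complex downward means $w\alpha \in \Delta(\mfn^-,\mft)$, and Levi type means $w\alpha \in \Delta(\mfl,\mft)$, so the nomenclature correspondence is immediate once the underlying geometric statement is proved.
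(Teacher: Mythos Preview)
Your first part is exactly the paper's argument: decompose each $P$-orbit as a finite union of $B$-orbits via Proposition~\ref{PGBBruhatDecomposition}, pass to $\BGB$ closure order, obtain $u \leq xv$ for some $x \in W_L$, and then invoke Proposition~\ref{PGBBGBEquiv}.

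For the trichotomy, your approach is correct but more elaborate than necessary, and you have restricted yourself to simple $\alpha$ while the theorem is stated for all $\alpha \in \Delta^+(\g,\mft)$. The paper handles the Levi case by the same conjugation you use, writing $P\dot{w}\dot{s}_\alpha B = P\dot{s}_{w\alpha}\dot{w}B$ and noting $\dot{s}_{w\alpha}\in L\subset P$ when $w\alpha\in\Delta(\mfl,\mft)$. For the remaining two cases the paper simply says ``the rest of the theorem now follows,'' and what is meant is this: if $w\alpha\in\Delta(\mfn,\mft)\subset\Delta^+$, then Theorem~\ref{BGBBruhat} gives $w < ws_\alpha$ in $W_G$, hence $W_Lw \leq W_Lws_\alpha$ by the very definition of the induced order, and the Levi case just excluded gives strict inequality; now apply the first part of the theorem. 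The $\mfn^-$ case is symmetric. No passage to minimal representatives, no explicit $\bbP^1$-fibration analysis, and no stabilizer computation is needed; the whole point is that $\Delta(\mfn,\mft)\subset\Delta^+$ and $\Delta(\mfn^-,\mft)\subset\Delta^-$.

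Your observation that the condition ``$w\alpha\in\Delta(\mfn,\mft)$'' is constant on the coset $W_Lw$ (because $W_L$ preserves $\Delta(\mfn,\mft)$) is correct and worth stating, but it is not needed for the paper's line: one works directly with the given $w$ and the inequality $w<ws_\alpha$ in $W_G$, then passes to cosets. Your route through the minimal representative $u$ and the relation $BuB\alphato Bus_\alpha B$ also works (indeed $us_\alpha$ is again minimal when $u\alpha\in\Delta(\mfn,\mft)$), but it is a detour.
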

\begin{proof}
First, $u \leq v$ implies that $B \dot{u} B \preceq^B B \dot{v}B$ which 
implies that $P \dot{u} B \preceq^P P \dot{v}B$.
Conversely,
\begin{eqnarray*}
P \dot{u} B &\subset & \overline{P \dot{v}B} \\
 \iff \cup_{w \in W_L} B \dot{w} \dot{u} B &\subset & \overline{ \cup_{w \in 
W_L} B \dot{w} \dot{v} B} = 
\cup_{w \in W_L} \overline{B \dot{w}\dot{v} B} \\
 \iff \text{for every } w \in W_L, \, B \dot{w} \dot{u} B &\subset & \overline{
B \dot{x} \dot{v} B} \text{ for some } x \in W_L \\
\Rightarrow u &\leq& xv \text{ for some } x \in W_L \\
\Rightarrow u &\leq& v
\text{ by Lemma 3.5 of \cite{D2} (Proposition \ref{PGBBGBEquiv}).}
\end{eqnarray*}

To prove the first of the remaining three statements, note that 
$P\dot{w}\dot{s}_\alpha B = P \dot{w}\dot{s}_\alpha \dot{w}^{-1} \dot{w} B
= P \dot{s}_{w\alpha} \dot{w} B$.  The rest of the theorem now follows.
\end{proof}

\begin{remark}
This proof does not generalize to $\KGP$ since it relies upon the Bruhat 
decomposition.  We could also 
have proved the theorem using the following more general heuristic.
Bruhat order and closure order on $\PGB$ are the same since:
\begin{enumerate}
\item Bruhat order is induced by Bruhat 
order on $\BGB$ and closure order and Bruhat order for $\BGB$ are the same;
\item the topology on $G/B$ is the quotient topology.
\end{enumerate}
The proof is concise, but rather than record it, we refer the reader to the 
proof of Theorem \ref{KGPClosure} where the order induced from Bruhat order on 
$\KGB$ and closure order of $\KGP$ are shown to be the same.  The proofs are 
similar.
\end{remark}

Recall that Bruhat order for $\PGB$  satisfies property $Z$.  We may restate 
property $Z$ for $W_L \backslash W_G$:
\begin{definition}
Let $u, v \in W_G$ be minimal length coset representatives for $W_L u$ and for 
$W_L v$.  Let $s$ be a simple reflection.  Then {\em property $Z(s, W_L u, 
W_L v )$} is satisfied if whenever 
$\ell( us ) \leq \ell( u )$ and
$\ell( vs ) \leq \ell( v )$, then
$$W_L u \leq W_L v \iff W_L us \leq W_L v \iff W_L us \leq W_L vs.$$
\end{definition}

Furthermore, an analogue of Deodhar's description II for Bruhat order on 
$W_G$ from \cite{D2} holds:
\begin{proposition}
Bruhat order is the unique partial order on $W_L \backslash W_G$ 
such that
\begin{enumerate}
\item for all $W_L w \in W_L \backslash W_G$, $W_L w \leq W_L 1 \iff 
W_L w = W_L 1$;
\item  $\leq$ has property $Z(s, W_L u, W_L v )$.
\end{enumerate}
\end{proposition}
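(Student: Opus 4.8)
The plan is to mimic Deodhar's uniqueness argument for Bruhat order on $W_G$ (description II in \cite{D2}), transplanting it to the quotient $W_L \backslash W_G$. We already know from the preceding results that Bruhat order on $W_L \backslash W_G$ satisfies both listed properties: property (1) is clear since $W_L 1$ is the class of the identity and is the unique minimal element (its minimal-length representative is $1 \leq w$ for all $w$, but $W_L w \leq W_L 1$ forces the minimal representative of $W_L w$ to be $\leq 1$, hence equal to $1$); property (2) is the restatement of property $Z$ just recorded. So the content is \emph{uniqueness}: any partial order $\preceq$ on $W_L \backslash W_G$ satisfying (1) and (2) must coincide with Bruhat order $\leq$.

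First I would fix such a $\preceq$ and prove, by induction on $\ell(v)$ where $v$ is the minimal-length representative of $W_L v$, the statement: for all cosets $W_L u$, one has $W_L u \preceq W_L v \iff W_L u \leq W_L v$. The base case $\ell(v) = 0$ means $W_L v = W_L 1$, where both orders reduce to equality by property (1) (Bruhat order trivially has property (1), so the same argument applies to it). For the inductive step, choose a simple reflection $s$ with $\ell(vs) < \ell(v)$; then $W_L(vs)$ has a minimal-length representative of strictly smaller length, so the inductive hypothesis applies to it. Now I would run the following case analysis, using property $Z(s, W_L u, W_L vs)$ for $\preceq$ and the corresponding (known) property for $\leq$. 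If $\ell(us) > \ell(u)$, i.e. $us$ is again a minimal-length representative but longer, one shows $W_L u \preceq W_L v \iff W_L(us) \preceq W_L(vs)$ and likewise for $\leq$, then applies the inductive hypothesis to $W_L(vs)$; there is a standard subtlety here about whether $us$ remains a minimal-length coset representative, which holds because $s$ is a right descent of $v$ but an ascent of $u$ forces $u$ and $us$ to lie in the same $W_L$-coset only if\ldots — more carefully, one argues that the coset $W_L(us)$ has minimal representative $us$ when $\ell(us)>\ell(u)$ and $u$ is minimal. If $\ell(us) < \ell(u)$, then $W_L(us)$ has a shorter minimal representative and property $Z$ gives $W_L u \preceq W_L v \iff W_L(us) \preceq W_L v \iff W_L(us) \preceq W_L(vs)$, and the last term is governed by the inductive hypothesis; identical reasoning applies to $\leq$, yielding the equivalence.

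The main obstacle I anticipate is bookkeeping around minimal-length coset representatives: property $Z$ as stated is phrased for minimal-length representatives with $s$ a right descent, and in the induction one repeatedly passes between $W_L u$, $W_L(us)$, $W_L v$, $W_L(vs)$, needing at each stage to know which element is the canonical (minimal-length) representative and whether $s$ behaves as a descent or ascent on it. The cleanest way to handle this is to recall the standard facts: if $w$ is the minimal-length representative of $W_L w$ and $s$ a simple reflection, then either $\ell(ws) = \ell(w)+1$ and $ws$ is the minimal-length representative of $W_L(ws)$ (a genuinely different coset), or $ws \in W_L w$ (so $W_L(ws) = W_L w$), or $\ell(ws) = \ell(w)-1$ is impossible for a minimal-length representative. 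Wait — the third possibility is ruled out precisely because $w$ is minimal. So the only cases are $W_L(ws) = W_L w$ or $ws$ is the minimal representative of a new coset with $\ell(ws) = \ell(w)+1$. This dichotomy, applied to $v$, lets me select $s$ with $W_L(vs) \neq W_L v$ and $\ell$ of its minimal representative strictly smaller — no, that requires $\ell(vs) < \ell(v)$, which contradicts minimality. The resolution: one does \emph{not} insist $v$ be the minimal representative; instead, following Deodhar, one works with the minimal representative but inducts using the fact that $\preceq$ and $\leq$ both restrict compatibly, picking up any $s$ with $vs$ (as an element of $W_G$, not necessarily minimal in its coset) shorter. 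I would therefore phrase the induction on $\ell(v)$ for $v$ the minimal representative, pick $s$ with $\ell(vs)<\ell(v)$ in $W_G$ — then $vs$ lies in a \emph{different} $W_L$-coset (since $v$ was minimal, $vs \notin W_L v$) whose minimal representative $v'$ satisfies $\ell(v') \le \ell(vs) < \ell(v)$ — and proceed. Verifying these elementary lemmas about $W_L \backslash W_G$ and then plugging them into Deodhar's skeleton is the bulk of the work; the logical structure is otherwise a routine copy of \cite{D2}.
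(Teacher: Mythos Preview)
The paper offers no proof for this proposition; it records it as the parabolic analogue of Deodhar's description~II and implicitly defers to \cite{D2}. Your plan---transplant Deodhar's uniqueness argument to $W_L \backslash W_G$, inducting on the length of the minimal representative of $W_L v$---is therefore exactly what the paper has in mind, and the overall strategy is sound.

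Your handling of the coset-representative trichotomy is muddled, however, and hides a real case you never treat. You write that ``$\ell(ws) = \ell(w)-1$ is impossible for a minimal-length representative'' and later that ``$W_L(us)$ has minimal representative $us$ when $\ell(us) > \ell(u)$ and $u$ is minimal.'' Both are wrong: minimality of $u$ in $W_L u$ constrains \emph{left} multiplication by $W_L$, not right multiplication by $s$. The correct trichotomy for $u$ minimal and $s$ simple is: (a)~$us$ minimal with $\ell(us)=\ell(u)+1$; (b)~$us$ minimal with $\ell(us)=\ell(u)-1$; (c)~$us = s'u$ for some simple $s' \in W_L$, so $W_L(us)=W_L u$ and $\ell(us)=\ell(u)+1$. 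Your two cases cover (a) and (b), but in the Levi case~(c) the hypothesis $\ell(us)\le\ell(u)$ of the restated property~$Z$ fails, and there is no minimal $u'$ with $\ell(u's)<\ell(u')$ and $W_L(u's)=W_L u$, so you cannot invoke property~$Z$ as restated to pass from $W_L u \preceq W_L v$ to $W_L u \preceq W_L(vs)$. You must either argue this case separately or appeal to the stronger geometric form of property~$Z$ (Definition~\ref{PropertyZ} and the proof of Theorem~3.8), whose $(3)\Rightarrow(1)$ step, when $\pi_\alpha^{-1}\pi_\alpha(\cO_{W_L u})=\cO_{W_L u}$, yields that implication directly---but note that for an abstract $\preceq$ you only have the combinatorial restatement, so some care is required.
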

Again, Bruhat order for $\PGB$ may be described using only simple relations 
and we focus on reformulations of those relations.  As for $\BGB$, our 
reformulations apply not just to the simple relations.

\subsection{Weyl Group and Roots}
Here we describe (not necessarily simple) Bruhat relations using the Weyl 
group and roots.
\begin{theorem} \label{PGBBruhat}
Let $w \in W_G$ and let $\alpha \in \Delta^+$.  Then
\begin{eqnarray*}
W_L w &=& W_L w s_\alpha \quad \text{if } w \alpha \in \Delta( \mfl, \mft ) \\
W_L w &\xrightarrow{s_\alpha}& W_L ws_\alpha \quad \text{if } w \alpha \in 
\Delta(\mfn, \mft ) \\
W_L w &\xleftarrow{s_\alpha}& W_L ws_\alpha \quad \text{if } w \alpha \in 
\Delta(\mfn^-, \mft )
\end{eqnarray*}
Note that this is analogous to Bruhat order relations for $\BGB$ with $\mfl$ 
analogous to $\mft$, $\Delta( \mft, \mft ) = \lbrace \rbrace$.
\end{theorem}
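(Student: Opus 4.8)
The plan is to reduce everything to the already-proved $\BGB$ version, Theorem \ref{BGBBruhat}, using that Bruhat order on $W_L\backslash W_G$ is by definition induced from Bruhat order on $W_G$, together with one elementary structural fact: the coset $W_L w$ is unchanged by right multiplication by $s_\alpha$ precisely when $w\alpha$ is a root of $\mfl$. Since $\alpha\in\Delta^+$ forces $w\alpha\in\Delta=\Delta(\mfl,\mft)\sqcup\Delta(\mfn,\mft)\sqcup\Delta(\mfn^-,\mft)$, there are exactly three cases, and I would organize the argument as a \emph{collapse} case ($w\alpha\in\Delta(\mfl,\mft)$), an \emph{upward} case ($w\alpha\in\Delta(\mfn,\mft)$), and a \emph{downward} case ($w\alpha\in\Delta(\mfn^-,\mft)$), with the third reducing to the second applied to $ws_\alpha$ in place of $w$, since $(ws_\alpha)\alpha=-w\alpha$ and $(ws_\alpha)s_\alpha=w$.

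For the collapse case I would argue: $w\alpha\in\Delta(\mfl,\mft)=\Delta\cap\bbZ I$, so the reflection $s_{w\alpha}$ lies in $W_L$ (which is generated by the simple reflections $s_\gamma$, $\gamma\in I$), because the reflections of $W_G$ belonging to a standard parabolic subgroup are exactly those in the corresponding sub-root system; writing $ws_\alpha=s_{w\alpha}w$ then gives $W_L ws_\alpha=W_L s_{w\alpha}w=W_L w$. For the upward case: since $P=LN\supseteq B=TU$ we have $\mfn\subseteq\mfu$, hence $\Delta(\mfn,\mft)\subseteq\Delta^+$, so $w\alpha\in\Delta^+$; Theorem \ref{BGBBruhat} then gives $w\xrightarrow{s_\alpha}ws_\alpha$ in $W_G$, in particular $w<ws_\alpha$. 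Taking $w$ and $ws_\alpha$ themselves as coset representatives, the definition of the induced order yields $W_L w\le W_L ws_\alpha$, and this is strict: $W_L w=W_L ws_\alpha$ would force $s_{w\alpha}=ws_\alpha w^{-1}\in W_L$, i.e.\ $w\alpha\in\Delta(\mfl,\mft)$, contradicting the hypothesis. Thus $W_L w<W_L ws_\alpha$, which is what the arrow $W_L w\xrightarrow{s_\alpha}W_L ws_\alpha$ records.

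Then the downward case follows: if $w\alpha\in\Delta(\mfn^-,\mft)$ then $(ws_\alpha)\alpha=-w\alpha\in\Delta(\mfn,\mft)$, so the upward case applied to $ws_\alpha$ gives $W_L ws_\alpha\xrightarrow{s_\alpha}W_L(ws_\alpha)s_\alpha=W_L w$, i.e.\ $W_L ws_\alpha<W_L w$, which is precisely $W_L w\xleftarrow{s_\alpha}W_L ws_\alpha$. The reason the proof is short is that it deliberately avoids the nontrivial direction of Proposition \ref{PGBBGBEquiv} (\cite{D2}, Lemma 3.5): I use only the easy direction of the induced order (a comparison between representatives descends to the quotient), and strictness is supplied for free by the reflection-membership criterion. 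Correspondingly, the one place needing genuine care is exactly that criterion — that $s_{w\alpha}\in W_L$ if and only if $w\alpha\in\Delta(\mfl,\mft)$ — and granting it, everything else is immediate from Theorem \ref{BGBBruhat}. (A purely topological proof is also available, mirroring the $\BGB$ argument: reduce general $\alpha$ to a simple root via $P\dot w\dot s_\alpha B=P\dot s_{w\alpha}\dot w B$, then apply the $\bbP^1$-fibration $\pi_\alpha$ with Lemma \ref{TopologyLemma} and Proposition \ref{PGBBruhatDecomposition}; but the reduction to $\BGB$ above is shorter.)
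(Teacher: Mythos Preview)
Your proof is correct and matches the paper's approach. In the paper, Theorem \ref{PGBBruhat} carries no separate proof---it is the Weyl-group restatement of the three cases in the preceding theorem, where the Levi case is handled by the same conjugation identity $ws_\alpha=s_{w\alpha}w$ (written there as $P\dot w\dot s_\alpha B=P\dot s_{w\alpha}\dot w B$) and the remaining cases are dispatched with ``the rest of the theorem now follows''; your reduction to Theorem \ref{BGBBruhat} together with the easy direction of the induced order and the reflection-membership criterion $s_{w\alpha}\in W_L\iff w\alpha\in\Delta(\mfl,\mft)$ is exactly the natural way to fill that phrase in, and your observation that Proposition \ref{PGBBGBEquiv} is not needed for this direction is accurate.
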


\begin{remark}
Compare this characterization to John Stembridge's characterization of 
parabolic Bruhat order in section 2 of \cite{JS} in which $W_L$-cosets are 
associated with $W_G$-orbits in the dual space of the Cartan subalgebra with 
stabilizer $W_L$.  Bruhat order then corresponds to the partial order on the 
root lattice.
\end{remark}

\subsection{Roots and Pullbacks}
Since $\alpha_{\dot{w}} = w \alpha$, we may reformulate Bruhat order as 
follows:
\begin{theorem}
Let $\alpha$ be a positive root and $w \in W_G$.  Then 
\begin{eqnarray*}
P {w} B & \alphato & P{w}{s}_\alpha B
\text{ if } \alpha_{\dot{w}} \in \Delta( \mfn, \mft ) \\
P {w} B & \alphafrom & P {w}{s}_\alpha B \text{ if } 
\alpha_{\dot{w}} \in \Delta( \mfn^-, \mft ).
\end{eqnarray*}
\end{theorem}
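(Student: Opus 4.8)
The statement to prove is the reformulation of the simple/general Bruhat relations for $\PGB$ in terms of pullbacks: $P w B \alphato P w s_\alpha B$ when $\alpha_{\dot w} \in \Delta(\mfn,\mft)$, and $P w B \alphafrom P w s_\alpha B$ when $\alpha_{\dot w} \in \Delta(\mfn^-,\mft)$. The plan is to observe that this is an immediate consequence of the previous theorem (Theorem \ref{PGBBruhat}, the Weyl-group-and-roots description of parabolic Bruhat relations) together with the elementary identity $\alpha_{\dot w} = w\alpha$ recorded in the lemma of the $\BGB$ section, whose proof (using $\Int$ rather than $\Ad$, and valid for any representative $n \in N(T)$ of $w$) carries over verbatim since it makes no reference to $P$ or $L$.

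First I would note that, under the bijection $\PGB \leftrightarrow W_L\backslash W_G$ from Proposition \ref{PGBBruhatDecomposition}, the orbit $P w B$ corresponds to the coset $W_L w$, and likewise $P w s_\alpha B$ corresponds to $W_L w s_\alpha$; this is independent of the choice of representative $w$ of the coset because $P\dot{x}\dot{w}B = \amalg_{y\in W_L} B\dot y\dot x\dot w B$ ranges over the same union of $B$-double cosets for any $x \in W_L$. Then the relation $\cO_{W_L w} \alphato \cO_{W_L w s_\alpha}$ holds precisely when $W_L w \xrightarrow{s_\alpha} W_L w s_\alpha$, i.e.\ when $W_L w < W_L w s_\alpha$ and the two cosets are distinct; by Theorem \ref{PGBBruhat} this occurs exactly when $w\alpha \in \Delta(\mfn,\mft)$ (the Levi-type case $w\alpha \in \Delta(\mfl,\mft)$ being exactly when the two cosets coincide and hence no $\alphato$ relation obtains, and the complex-downward case $w\alpha \in \Delta(\mfn^-,\mft)$ giving the reverse relation). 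Substituting $w\alpha = \alpha_{\dot w}$ yields the displayed statement, and symmetrically for $\alphafrom$.

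The only point requiring a word of care is that Theorem \ref{PGBBruhat} and Definition \ref{ReducedDecomposition} together need to be reconciled: one must check that the covering relation $\alphato$ in the sense of the $\bbP^1$-fibration definition agrees with the Weyl-group covering relation $W_L w \xrightarrow{s_\alpha} W_L w s_\alpha$ in the complex-upward case, i.e.\ that the dimension jumps by exactly one and that $\cO_{W_L w s_\alpha}$ is the dense orbit in $\pi_\alpha^{-1}\pi_\alpha(\cO_{W_L w})$. This is where the dimension-of-orbit versus dimension-of-stabilizer bookkeeping (the remark following the $\BGB$ theorem) does the work: $\dim P\dot w B + 1 = \dim P\dot w\dot s_\alpha B$ precisely when $\ell$ of the minimal coset representative increases under right multiplication by $s_\alpha$, which in root terms is the condition $w\alpha \in \Delta(\mfn,\mft)$. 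Since all of this is already packaged into Theorem \ref{PGBBruhat}, I expect no genuine obstacle here; the entire proof is the one-line substitution $\alpha_{\dot w} = w\alpha$, and the brief reconciliation above is the routine part that would occupy at most a sentence or two in the written version.
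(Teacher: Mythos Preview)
Your proposal is correct and matches the paper's approach exactly: the paper simply prefaces the theorem with ``Since $\alpha_{\dot{w}} = w \alpha$, we may reformulate Bruhat order as follows'' and gives no further proof, so the entire argument is indeed the one-line substitution of the identity $\alpha_{\dot w}=w\alpha$ into Theorem~\ref{PGBBruhat}. Your additional reconciliation of the $\alphato$ notation with the Weyl-group covering relation is more detail than the paper records, but it is consistent with the surrounding discussion and does not deviate from the intended route.
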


\subsection{Cross Actions}
\begin{definition}
The cross action of $W$ on $\PGB$ is the action generated by
$$s_\alpha \times P {w} B := P {w} {s}_\alpha^{-1}B$$
where $\alpha$ is a positive root.
\end{definition}

It follows immediately:
\begin{theorem}
Let $\alpha$ be a positive root and $w \in W$.  Then the cross action
corresponding to $\alpha$ satisfies:
\begin{eqnarray*}
P {w} B &  = & s_\alpha \times P {w} B = 
P{w}{s}_\alpha^{-1} B \text{ if } w\alpha \in \Delta( \mfl, \mft ) \\
P {w} B & \alphato& s_\alpha \times P {w} B = 
P{w}{s}_\alpha^{-1} B \text{ if } w\alpha \in \Delta( \mfn, \mft ) \\
P {w} B & \alphafrom& s_\alpha \times B {w} B = 
P {w}{s}_\alpha^{-1} B \text{ if } w\alpha \in \Delta( \mfn^-, \mft ).
\end{eqnarray*}
\end{theorem}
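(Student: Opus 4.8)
The plan is to deduce the theorem directly from the ``Weyl Group and Roots'' characterization of $\PGB$-relations (Theorem~\ref{PGBBruhat}) by unwinding the definition of the cross action; the three cases below will then be literal restatements of the three cases of Theorem~\ref{PGBBruhat} read through the bijection $\PGB \leftrightarrow W_L\backslash W_G$.

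First I would use the definition of the cross action to write $s_\alpha \times PwB = Pw\dot{s}_\alpha^{-1}B$. Since $\dot{s}_\alpha^{-1}$ has image $s_\alpha^{-1}=s_\alpha$ in $W_G$, it lies in $\dot{s}_\alpha T$, and as $T\subseteq B$ this gives $Pw\dot{s}_\alpha^{-1}B = Pw\dot{s}_\alpha B = Pws_\alpha B$; so $s_\alpha\times PwB = Pws_\alpha B$ regardless of the lift chosen (the same silent identification already used in the $\BGB$ cross-action theorem). Next I would pass to $W_L\backslash W_G$ via Proposition~\ref{PGBBruhatDecomposition}, under which $PwB \leftrightarrow W_L w$ and $Pws_\alpha B \leftrightarrow W_L ws_\alpha$, and use the theorem identifying closure order on $\PGB$ with Bruhat order on $W_L\backslash W_G$, under which $W_L w \xrightarrow{s_\alpha} W_L ws_\alpha$ corresponds to $PwB \preceq_\alpha^P Pws_\alpha B$, i.e. $PwB \alphato Pws_\alpha B$. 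Now Theorem~\ref{PGBBruhat} says exactly: $w\alpha\in\Delta(\mfl,\mft)$ forces $W_L w = W_L ws_\alpha$, hence $PwB = Pws_\alpha B = s_\alpha\times PwB$; $w\alpha\in\Delta(\mfn,\mft)$ forces $W_L w \xrightarrow{s_\alpha} W_L ws_\alpha$, hence $PwB \alphato s_\alpha\times PwB$; and $w\alpha\in\Delta(\mfn^-,\mft)$ forces the reverse, hence $PwB \alphafrom s_\alpha\times PwB$. Since $s_\alpha\times PwB = Pw\dot{s}_\alpha^{-1}B$, these are the displayed assertions.

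I do not expect any real obstacle; the theorem is advertised as following ``immediately'', and the only points needing a word of care are (i) the harmless bookkeeping that replacing $\dot{s}_\alpha^{-1}$ by $\dot{s}_\alpha$ (or any other lift of $s_\alpha$) leaves the double coset unchanged, and (ii) the convention — already adopted when the authors extended $\xrightarrow{s_\alpha}$ and $\alphato$ beyond simple roots — that for a general positive root $\alpha$ the relations above are read as (not necessarily simple) Bruhat relations rather than covering relations in the $\pi_\alpha$-fibration picture.
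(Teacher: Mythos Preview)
Your proposal is correct and follows essentially the same route as the paper. The paper's own proof is deliberately terse: it notes that when $w\alpha\in\Delta(\mfl,\mft)$ one has $\dot{s}_{w\alpha}\in L\subset P$ (so $Pw\dot{s}_\alpha B = P\dot{s}_{w\alpha}wB = PwB$), and then says ``the remainder resembles previous arguments''---meaning exactly the reduction to the earlier $\PGB$ characterization (Theorem~\ref{PGBBruhat} / the closure-order theorem) that you spell out. Your version simply makes explicit the bookkeeping the paper leaves implicit.
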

\begin{proof}
Again, 
if $w \alpha \in \Delta( \mfl, \mft )$, then $\dot{s}_{w\alpha} \in L \subset
P$.
The remainder of the proof resembles previous arguments.
\end{proof}

\section{Reduced Expressions for $\BGB$ and $\PGB$} \label{Reduced}
Throughout this section, 
wherever $w = s_{i_1} s_{i_2}  \cdots s_{i_k}$, 
let $w_j = s_{i_1} s_{i_2}  \cdots s_{i_j}$ for $1 \leq j \leq k$.

\subsection{$\BGB$}
An important aspect of Bruhat order is understanding decompositions of Weyl 
group elements into products of simple reflections.
\begin{definition}
Let $w \in W_G$.  Then the product of simple reflections $w = s_{i_1} s_{i_2} 
\cdots s_{i_k}$ is a 
{\em reduced expression} for $w$ (or {\em $B$-reduced expression}) if $k$ 
is minimal.
\end{definition}

The following result is standard:
\begin{proposition}
Let $w \in W_G$  where $w = s_{i_1} s_{i_2} \cdots s_{i_k}$ as a product of 
simple reflections.  
Then $w =  s_{i_1} s_{i_2} \cdots s_{i_k}$ is a reduced expression if 
and only if $w_j \alpha_{i_{j+1}} > 0$ for $j = 1, 2, \ldots, k-1$. 
\end{proposition}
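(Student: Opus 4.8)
The plan is to prove the two directions of the biconditional separately, using the well-known characterization of reduced expressions via the length function, namely that $w = s_{i_1} \cdots s_{i_k}$ is reduced if and only if $\ell(w) = k$, together with the fact that $\ell(us_\beta) = \ell(u) + 1$ if $u\beta > 0$ and $\ell(us_\beta) = \ell(u) - 1$ if $u\beta < 0$ for a simple root $\beta$ (this is the $\alpha$-simple case of Theorem \ref{BGBBruhat}, already available). The key observation linking these is that passing from $w_j$ to $w_{j+1} = w_j s_{i_{j+1}}$ is exactly right-multiplication by a simple reflection, so the sign of $w_j \alpha_{i_{j+1}}$ controls whether the length goes up or down at step $j+1$.

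First I would argue the forward direction by contraposition: suppose the stated condition fails, so $w_j \alpha_{i_{j+1}} < 0$ for some $j$ with $1 \le j \le k-1$. Then $\ell(w_{j+1}) = \ell(w_j) - 1 \le \ell(w_j)$, and combined with the general bound $\ell(w_{i+1}) \le \ell(w_i) + 1$ at every step, a telescoping estimate gives $\ell(w) = \ell(w_k) \le \ell(w_j) + (k - j) - 2 + 1$; more carefully, since $\ell(w_1) = 1 \le 1$ and each of the $k-1$ further steps changes length by $\pm 1$ with at least one step being $-1$, we get $\ell(w) \le k - 2 < k$, so the expression is not reduced. For the reverse direction, assume $w_j \alpha_{i_{j+1}} > 0$ for all $j = 1, \ldots, k-1$. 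Then by induction on $j$, $\ell(w_{j+1}) = \ell(w_j) + 1$: the base case $\ell(w_1) = 1$ is clear, and the inductive step is precisely the length-increasing case of right multiplication by the simple reflection $s_{i_{j+1}}$. Hence $\ell(w) = \ell(w_k) = k$, so the expression is reduced.

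The main obstacle, such as it is, is bookkeeping rather than conceptual: one must be careful that the hypothesis ranges over $j = 1, \ldots, k-1$ (there is no condition imposed relating $w_0 = e$ to $w_1$, since $\ell(w_1) = 1 > 0 = \ell(w_0)$ automatically), and one must invoke the correct simple-root instance of the earlier length formula rather than the full positive-root statement. It is also worth noting explicitly that an expression being reduced is equivalent to $\ell(w) = k$, which is the standard fact being used to convert the length computation into the desired conclusion. No deeper input — such as the strong exchange condition or property $Z$ — is needed here; the entire argument is a direct consequence of how the length function behaves under right multiplication by simple reflections.
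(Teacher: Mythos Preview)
Your argument is correct and is precisely the standard proof: the simple-root case of Theorem~\ref{BGBBruhat} (equivalently \cite{H}, Lemma~1.6) says each step $w_j \mapsto w_{j+1}$ changes length by $+1$ or $-1$ according to the sign of $w_j\alpha_{i_{j+1}}$, and the telescoping is immediate. The paper itself omits a proof, simply labelling the result as standard, so there is nothing to compare against; your write-up supplies exactly the expected justification. One cosmetic remark: the first telescoping inequality you wrote, $\ell(w_k)\le \ell(w_j)+(k-j)-2+1$, is a bit garbled and not quite what you want, but the ``more carefully'' sentence that follows it is correct and suffices on its own---you could delete the earlier estimate.
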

This is the equivalent definition for reduced expression that generalizes 
nicely to $\PGB$.

\subsection{$\PGB$}
Again, we wish to simplify the existing literature and limit the introduction 
of complex machinery as much as possible.
\begin{definition}
An element $w \in W_G$ is {\em $P$-minimal} if it is a minimal length coset 
representative for $W_L w$.  Equivalently, $w \alpha \in \Delta( \mfn, \mft )$
for every $\alpha \in I$.
\end{definition}

\begin{remark}
As discussed, $W_L \backslash W_G$ is in bijection with the $P$-minimal 
elements of $W_G$.
\end{remark}

\begin{definition}
Let $w \in W_G$ be $P$-minimal.  A {\em $P$-reduced expression} for 
$w$ is a product of simple reflections $w = s_{i_1} s_{i_2} \cdots s_{i_k}$ 
where $w_j \alpha_{i_{j+1}} \in 
\Delta( \mfn, \mft )$ for $j=1, 2, \ldots, k-1$.
We define $\ell^P(w) = k$, the {\em $P$-length} of $w$.
\end{definition}

\begin{lemma}
If $w \in W$ is $P$-minimal, then any $B$-reduced expression for $w$ is also 
$P$-reduced.
\end{lemma}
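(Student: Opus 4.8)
The statement to prove is: if $w \in W$ is $P$-minimal, then any $B$-reduced expression $w = s_{i_1} s_{i_2} \cdots s_{i_k}$ is also $P$-reduced. Unpacking the definitions, a $B$-reduced expression satisfies $w_j \alpha_{i_{j+1}} > 0$, i.e. $w_j \alpha_{i_{j+1}} \in \Delta^+ = \Delta(\mfl, \mft) \amalg \Delta(\mfn, \mft)$, for $j = 1, \ldots, k-1$. What we must upgrade is the assertion that in fact $w_j \alpha_{i_{j+1}} \in \Delta(\mfn, \mft)$ for each such $j$ — in other words, no intermediate positive root $w_j \alpha_{i_{j+1}}$ is a Levi root. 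So the plan is to argue by contradiction: suppose for some $j$ with $1 \le j \le k-1$ we have $\beta := w_j \alpha_{i_{j+1}} \in \Delta^+(\mfl, \mft)$.

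The key observation is that once a positive root lands in $\Delta(\mfl,\mft)$ at step $j$, subsequent steps of a reduced expression cannot pull it back out of the Levi in a way compatible with $P$-minimality of the final element $w$. Concretely, I would track the root $\alpha_{i_{j+1}}$: set $\gamma = \alpha_{i_{j+1}}$, so $w_j \gamma = \beta \in \Delta^+(\mfl,\mft)$. Since $w = w_j \cdot (s_{i_{j+1}} \cdots s_{i_k})$, write $w \gamma = w_j (s_{i_{j+1}} \cdots s_{i_k}) \gamma$. But $s_{i_{j+1}} \gamma = -\gamma$, so this isn't quite the right root to track directly; instead the cleaner route is: because $w = s_{i_1} \cdots s_{i_k}$ is $B$-reduced, $w^{-1}$ maps the set $\{ w_j \alpha_{i_{j+1}} : j = 0, \ldots, k-1\}$ (this is the inversion set of $w^{-1}$, or rather $N(w^{-1})$ suitably indexed) to negative roots. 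In particular, $w^{-1}\beta < 0$. Now I use $P$-minimality: $w$ is a minimal-length coset representative of $W_L w$ means $w^{-1}$ maps $\Delta^+(\mfl,\mft)$ into $\Delta^+(\g,\mft)$ — equivalently, $w \alpha \in \Delta(\mfn,\mft)$ for all $\alpha \in I$, which by an elementary argument (a positive root in $\mfl$ is a sum of simple roots in $I$, and $w$ applied to it is a sum of the $w\alpha$ for $\alpha \in I$, all in $\mfn$, hence in $\mfn$) extends to: $w\alpha \in \Delta^+(\g,\mft)$ for every $\alpha \in \Delta^+(\mfl,\mft)$, and symmetrically $w^{-1}$ sends $\Delta^+(\mfl,\mft)$ to... — wait, that direction needs care. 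The correct and usable form is: $w$ $P$-minimal $\iff$ $w^{-1}(\Delta^+(\mfl,\mft)) \subset \Delta^+(\g,\mft)$. Then from $\beta \in \Delta^+(\mfl,\mft)$ we get $w^{-1}\beta \in \Delta^+(\g,\mft)$, i.e. $w^{-1}\beta > 0$, directly contradicting $w^{-1}\beta < 0$ obtained from $B$-reducedness. This contradiction shows no such $j$ exists, so the expression is $P$-reduced.

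The one place requiring a little care — and the step I expect to be the main obstacle — is pinning down the equivalence ``$w$ is $P$-minimal $\iff$ $w^{-1}(\Delta^+(\mfl,\mft)) \subseteq \Delta^+(\g,\mft)$'' and relating it correctly to the inversion-set statement $w^{-1}\beta < 0$. One must be scrupulous about which of $w$, $w^{-1}$ acts on which root system, and about the standard fact (which I would cite from \cite{H}, around the treatment of minimal coset representatives) that $w$ is the minimal-length representative of $W_L w$ if and only if $\ell(s_\alpha w) > \ell(w)$ for all $\alpha \in I$, equivalently $w^{-1}\alpha > 0$ for all $\alpha \in I$, equivalently $w^{-1}$ carries all of $\Delta^+(\mfl,\mft)$ to positive roots. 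Everything else — that $N(w^{-1})$ for a reduced word $w = s_{i_1}\cdots s_{i_k}$ is exactly $\{ \alpha_{i_1}, s_{i_1}\alpha_{i_2}, \ldots, s_{i_1}\cdots s_{i_{k-1}}\alpha_{i_k} \} = \{ w_{j-1}\alpha_{i_j} \}$, with the convention matching $w_j\alpha_{i_{j+1}}$ used in the paper — is completely standard (see \cite{H}, p. 14 and the reduced-expression criterion quoted just above), so I would invoke it without reproving it. Thus the whole proof is: translate $P$-minimality into the positivity statement for $w^{-1}$ on Levi roots; observe a $B$-reduced word's partial products applied to the next simple root give positive roots whose $w^{-1}$-images are negative; conclude none of those positive roots can be Levi roots; hence the word is $P$-reduced.
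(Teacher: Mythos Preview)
Your proof is correct. Both you and the paper argue by contradiction from the hypothesis that some $\beta = w_{j}\alpha_{i_{j+1}}$ lies in $\Delta^+(\mfl,\mft)$, but reach the contradiction by different routes. The paper observes directly that then $s_\beta \in W_L$ and $w = s_\beta \cdot (s_{i_1}\cdots \widehat{s}_{i_{j+1}}\cdots s_{i_k})$, so $W_L w$ contains an element of length $k-1$, contradicting minimality of $w$. You instead invoke two standard characterizations: that $\{w_{j-1}\alpha_{i_j}\}_{j=1}^{k}$ is exactly the inversion set $N(w^{-1})$, so $w^{-1}\beta<0$; and that $P$-minimality is equivalent to $w^{-1}(\Delta^+(\mfl,\mft)) \subset \Delta^+$, so $w^{-1}\beta>0$. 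The two contradictions are essentially the same fact viewed differently --- your $w^{-1}\beta < 0$ is equivalent to $\ell(s_\beta w) < \ell(w)$, which is precisely the paper's shorter coset representative --- but the paper's argument is self-contained and constructive, while yours trades the explicit element for an appeal to standard coset-representative lore. One small note: the ``equivalently'' clause in the paper's definition of $P$-minimal is stated with $w\alpha$ rather than $w^{-1}\alpha$; you were right to use the $w^{-1}$ form, which is the correct characterization for minimal-length representatives of $W_L w$.
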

\begin{proof}
The $P$-minimal element has a $B$-reduced expression $w = s_{i_1} \cdots 
s_{i_k}$.  Each $w_{j-1} \alpha_j$ is positive.  Consider the equations 
\begin{eqnarray*}
W_L w &=& W_L s_\alpha
\qquad \text{if } w \alpha \in \Delta( \mfl, \mft ) \\
W_L w &\xrightarrow{\alpha}& W_L s_\alpha
\qquad \text{if } w \alpha \in \Delta( \mfn, \mft ) \\
W_L w &\xleftarrow{\alpha}& W_L s_\alpha
\qquad \text{if } w \alpha \in \Delta( \mfn^-, \mft ).
\end{eqnarray*}
If the expression is not $P$-reduced, then for some $1 \leq j \leq k$, we have
$w_{j-1}\alpha_j \in \Delta^+( \mfl, \mft )$.  Then $w = s_{w_{j-1}\alpha_j}
w_{j-1}s_{i_{j+1}}
\cdots s_k$ whence $W_L w = W_L s_{i_1} \cdots \hat{s}_{i_{j}} \cdots s_{i_k}$,
contradicting minimality of $w$. Thus our expression must be $P$-reduced.
\end{proof}

\begin{proposition}
Every coset in $W_L \backslash W_G$ has a unique $P$-minimal representative and 
every $P$-minimal representative has a $P$-reduced expression.
\end{proposition}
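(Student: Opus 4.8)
The plan is to prove the two assertions separately. That each coset $W_L w$ contains a unique $P$-minimal (i.e.\ minimal-length) representative is the classical theory of minimal coset representatives for a parabolic subgroup of a Coxeter group, which I would either cite or reprove in a few lines; that every $P$-minimal element admits a $P$-reduced expression then follows immediately from the lemma just established.

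For the first assertion, existence is clear: $\ell$ takes nonnegative integer values, so the coset $W_L w$ contains an element $w_0$ of minimal length, and such a $w_0$ is $P$-minimal by definition. For uniqueness I would establish the additivity statement that if $w_0$ is a minimal-length element of $W_L w_0$ then $\ell(x w_0) = \ell(x) + \ell(w_0)$ for every $x \in W_L$; granting it, any $x \in W_L$ with $x \neq 1$ satisfies $\ell(x w_0) = \ell(x) + \ell(w_0) > \ell(w_0)$, so $w_0$ is the only minimal-length element of its coset. I would prove the additivity by induction on $\ell(x)$. For $x \neq 1$, pick a simple reflection $s_\alpha$ with $\alpha \in I$ and $\ell(s_\alpha x) = \ell(x) - 1$, put $x' = s_\alpha x \in W_L$, and use the inductive hypothesis to see that concatenating a reduced word for $x'$ with a reduced word for $w_0$ yields a reduced word for $x' w_0$. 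If left multiplication by $s_\alpha$ shortened this word, the exchange condition would delete one of its letters: deletion of a letter from the $x'$-block would force $\ell(s_\alpha x') \leq \ell(x') - 1$, contradicting $s_\alpha x' = x$ with $\ell(x) = \ell(x') + 1$; deletion of a letter from the $w_0$-block would produce $w_0' = (x')^{-1} s_\alpha x' \, w_0 \in W_L w_0$ with $\ell(w_0') < \ell(w_0)$, contradicting minimality of $w_0$. Hence $\ell(s_\alpha x' w_0) = \ell(x' w_0) + 1 = \ell(x) + \ell(w_0)$, completing the induction.

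For the second assertion, let $w$ be $P$-minimal. By definition of the length function, $w$ has a $B$-reduced expression $w = s_{i_1} s_{i_2} \cdots s_{i_k}$; by the preceding lemma this expression is automatically $P$-reduced, so it provides the required $P$-reduced expression for $w$, and in particular $\ell^P(w) = \ell(w)$.

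I do not expect a genuine obstacle. The only substantive point is ruling out a second minimal-length element in a coset, which is exactly what the additivity formula delivers; the second assertion is little more than the preceding lemma combined with the existence of $B$-reduced expressions. The sole place that requires care is the bookkeeping in the exchange step — keeping track of whether the deleted letter lies in the $x'$-block or the $w_0$-block, and extracting the contradiction in each case.
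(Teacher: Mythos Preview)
Your proof is correct. The second assertion is handled exactly as in the paper: take a $B$-reduced expression and invoke the preceding lemma to conclude it is $P$-reduced.

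For the first assertion, the paper's proof is a single sentence: ``The first statement follows immediately from the definition of $P$-minimal element.'' In other words, the paper treats the existence and uniqueness of minimal-length coset representatives for a parabolic subgroup as standard Coxeter-group background and does not reprove it. You instead supply a complete argument via the additivity formula $\ell(xw_0) = \ell(x) + \ell(w_0)$, established by induction on $\ell(x)$ together with the exchange condition. Your case analysis (deleted letter in the $x'$-block versus the $w_0$-block) is sound; in the second case the key observation that $(x')^{-1} s_\alpha x' \in W_L$ follows since $\alpha \in I$ gives $s_\alpha \in W_L$. So your approach is strictly more self-contained than the paper's, at the cost of a few extra lines; the paper's brevity relies on the reader knowing (or accepting without proof) the classical fact about minimal coset representatives.
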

\begin{proof}
The first statement follows immediately from the definition of $P$-minimal 
element.  The second statement follows from the previous lemma.
\end{proof}

\begin{remark}
We can likewise define $P$-maximal elements:  $w \in W_G$ is $P$-maximal if 
it is a maximal length coset representative for $W_L w$.  Equivalently, 
$w \alpha \in \Delta( \mfn^-, \mft )$ for every $\alpha \in I$.  We also 
have a bijection between $W_L \backslash W_G$ and the $P$-maximal elements of 
$W_G$.  See the discussion of $P$-maximal elements in section \ref{KGP} where 
we discuss $\KGP$.
\end{remark}

\section{Exchange Property for $\BGB$ and $\PGB$} \label{Exchange}
\subsection{$\BGB$}
The Exchange Property for $W_G$ is the following:
\begin{theorem}
Let $w = s_{i_1} s_{i_2} \ldots s_{i_k} \in W_G$ be a reduced expression 
and let $\alpha \in \Pi$.
If $w \alpha < 0$, then  $\ell(w) > \ell(w s_\alpha)$.  
The Exchange Property is the assertion that
there exists some $j$ such that $w s_\alpha = s_{i_1} s_{i_2} \cdots 
\hat{s}_{i_j} \cdots s_{i_k}$ is a reduced expression for $ws_\alpha$ and 
hence $s_{i_1}s_{i_2} \cdots\hat{s}_{i_j} \cdots s_{i_k}s_\alpha$ is a reduced 
expression for $w$.
\end{theorem}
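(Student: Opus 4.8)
The plan is to produce one explicit deletion index $j$, verify the word identity $w s_\alpha = s_{i_1}\cdots \hat{s}_{i_j}\cdots s_{i_k}$ by a telescoping cancellation, and then read off that this expression is reduced from the elementary bound $|\ell(v) - \ell(vs)| \le 1$ for a simple reflection $s$. (The preliminary assertion $w\alpha < 0 \Rightarrow \ell(w) > \ell(ws_\alpha)$ is the simple-root characterisation of length already invoked above, \cite{H} Lemma 1.6.) Crucially, I want to keep the argument clear of the strong exchange condition and of any subword characterisation of Bruhat order, since in the logical development of this section those come later; the only inputs I will use are $\ell(v) = \#\{\beta \in \Delta^+ : v\beta \in \Delta^-\}$, the fact that a simple reflection $s_\gamma$ negates $\gamma$ and permutes $\Delta^+ \setminus \{\gamma\}$, and the conjugation identity $v s_\beta v^{-1} = s_{v\beta}$.

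The heart of the matter is locating $j$. Since $\alpha \in \Pi \subset \Delta^+$ and $w\alpha \in \Delta^-$, I read the reduced word from the right: set $B_t = s_{i_t} s_{i_{t+1}}\cdots s_{i_k}$ for $1 \le t \le k$ and $B_{k+1} = 1$, so $B_{k+1}\alpha = \alpha \in \Delta^+$ while $B_1\alpha = w\alpha \in \Delta^-$. The sign of $B_t\alpha$ therefore changes somewhere as $t$ decreases, so there is an index $j$ with $B_{j+1}\alpha \in \Delta^+$ and $B_j\alpha = s_{i_j}(B_{j+1}\alpha) \in \Delta^-$. As the only positive root sent negative by $s_{i_j}$ is $\alpha_{i_j}$, this forces $B_{j+1}\alpha = \alpha_{i_j}$, hence $\alpha = B_{j+1}^{-1}(\alpha_{i_j})$ and, by the conjugation identity, $s_\alpha = B_{j+1}^{-1}\, s_{i_j}\, B_{j+1}$.

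Writing $A = s_{i_1}\cdots s_{i_{j-1}}$ so that $w = A\, s_{i_j}\, B_{j+1}$, substitution gives $w s_\alpha = A\, s_{i_j}\, B_{j+1}\, B_{j+1}^{-1}\, s_{i_j}\, B_{j+1} = A B_{j+1} = s_{i_1}\cdots \hat{s}_{i_j}\cdots s_{i_k}$, an expression of length $k-1$; since $\ell(ws_\alpha) \ge \ell(w) - 1 = k-1$, it is reduced, and multiplying on the right by $s_\alpha$ returns the given reduced expression for $w$. I do not expect a genuine obstacle: the proof is short and classical, and the only things demanding care are avoiding circular appeals to strong exchange or the subword property (hence routing everything through the inversion count) and nailing the deletion to the specific index $j$ above rather than merely asserting that some deletion exists.
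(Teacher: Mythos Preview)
Your argument is correct and is the standard root-system proof of the Exchange Property: track where the sign of $B_t\alpha$ flips along the word, deduce $B_{j+1}\alpha = \alpha_{i_j}$ from the fact that $s_{i_j}$ permutes $\Delta^+\setminus\{\alpha_{i_j}\}$, and cancel. The length bound $\ell(ws_\alpha)\ge k-1$ then forces the deleted word to be reduced. One small wording slip: at the end you say ``multiplying on the right by $s_\alpha$ returns the given reduced expression for $w$,'' but of course it returns the \emph{new} reduced expression $s_{i_1}\cdots \hat{s}_{i_j}\cdots s_{i_k}s_\alpha$, not the original one; the theorem statement itself makes this distinction.

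As for comparison with the paper: the paper does not prove this theorem at all. It is recorded in Section~\ref{Exchange} as the classical Exchange Property for $W_G$ and is treated as background (the $\PGB$ exchange theorem that follows explicitly invokes it as known). So there is nothing to compare your argument against; you have supplied the omitted classical proof, and your care to avoid the strong exchange condition and the subexpression characterisation is appropriate given where this sits in the logical development.
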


\subsection{$\PGB$}
The Exchange Property for $W_L \backslash W_G$ may be described similarly:
\begin{theorem}
Let $w = s_{i_1} s_{i_2} \cdots s_{i_k} \in W_G$ be a $P$-reduced expression
and let $\alpha \in \Pi$.
If $w \alpha \in \Delta( \mfn^-, \mft)$, then  there exists some $j$ such that 
$w s_\alpha = s_{i_1} s_{i_2} \cdots \hat{s}_{i_j} \cdots s_{i_k}$ and it 
is a $P$-reduced expression for $ws_\alpha$.  It follows that 
$w = s_{i_1} s_{i_2} \cdots \hat{s}_{i_j} \cdots s_{i_k}s_\alpha$ is a 
$P$-reduced expression for $w$.
\end{theorem}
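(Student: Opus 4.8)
The plan is to reduce the $P$-Exchange Property to the ordinary ($B$-)Exchange Property for $W_G$, exploiting the structure of $P$-minimal representatives and the characterization of $P$-reduced expressions. Since $w = s_{i_1} \cdots s_{i_k}$ is a $P$-reduced expression, $w$ is in particular $P$-minimal, and by the Lemma of Section~\ref{Reduced} every $B$-reduced expression for a $P$-minimal element is $P$-reduced; conversely I first want to observe that a $P$-reduced expression need not be $B$-reduced, so the reduction must be made carefully. The key point is: given the $P$-reduced expression $w = s_{i_1}\cdots s_{i_k}$, pass to a $B$-reduced expression $w = s_{j_1} \cdots s_{j_m}$ with $m = \ell(w) \le k$. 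Since $\alpha \in \Pi$ and $w\alpha \in \Delta(\mfn^-,\mft) \subset \Delta^-$, we have $\ell(ws_\alpha) < \ell(w)$, so the ordinary Exchange Property applies to the $B$-reduced expression. That gives an index and an omission producing a $B$-reduced (hence shorter) expression for $ws_\alpha$.

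The heart of the argument is translating this back into a deletion \emph{within the given $P$-reduced expression} $s_{i_1}\cdots s_{i_k}$. First I would check that $ws_\alpha$ is again $P$-minimal: since $w$ is $P$-minimal, $w\beta \in \Delta(\mfn,\mft)$ for all $\beta \in I$, and because $\alpha$ is a simple root \emph{not} in $I$ (this needs to be verified — if $\alpha \in I$ then $w\alpha \in \Delta(\mfl,\mft)$, contradicting $w\alpha \in \Delta(\mfn^-,\mft)$, so indeed $\alpha \notin I$), the reflection $s_\alpha$ permutes $\Delta^+ \setminus \{\alpha\}$ and fixes the span of $I$ appropriately; a short root-system computation shows $ws_\alpha\beta \in \Delta(\mfn,\mft)$ for $\beta \in I$ as well. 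Then I would run the subexpression/deletion machinery on the $P$-reduced expression directly: consider the partial products $w_j = s_{i_1}\cdots s_{i_j}$ and track the root $w_j^{-1}(\text{something})$, or more simply, use that $w s_\alpha$ is obtained from $w$ by right multiplication by a simple reflection with $\ell^P$ dropping, and that the $P$-length behaves like $B$-length on $P$-minimal elements (because a $B$-reduced expression of a $P$-minimal element is $P$-reduced and conversely one shows $\ell^P(w) = \ell(w)$ for $P$-minimal $w$). Establishing $\ell^P(w) = \ell(w)$ for $P$-minimal $w$ is the linchpin: combined with the Lemma already proved, it forces the given $P$-reduced expression to actually be $B$-reduced, at which point the classical Exchange Property applies verbatim, yielding the index $j$ with $ws_\alpha = s_{i_1}\cdots \hat{s}_{i_j} \cdots s_{i_k}$ as a $B$-reduced expression; since $ws_\alpha$ is $P$-minimal, this $B$-reduced expression is automatically $P$-reduced by the Lemma.

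So the concrete steps, in order, are: (1) show $\alpha \notin I$ from $w\alpha \in \Delta(\mfn^-,\mft)$; (2) show $ws_\alpha$ is $P$-minimal via a root-system computation on $\Delta(\mfn,\mft)$; (3) prove $\ell^P(w) = \ell(w)$ for $P$-minimal $w$ — equivalently, that a $P$-reduced expression for a $P$-minimal element is $B$-reduced (the reverse of the Lemma), which follows because any strictly shorter product of simple reflections equal to $w$ would, after passing to a reduced subword, contradict $P$-reducedness at the first non-positive step; (4) apply the ordinary Exchange Property to get the omitted index $j$ and the $B$-reduced expression $s_{i_1}\cdots\hat{s}_{i_j}\cdots s_{i_k}$ for $ws_\alpha$; (5) invoke the Lemma to conclude this expression is $P$-reduced, and append $s_\alpha$ to recover a $P$-reduced expression for $w$ (using that $w\alpha \in \Delta(\mfn^-,\mft)$ means $(ws_\alpha)\alpha \in \Delta(\mfn,\mft)$, so the appended step is of the required type). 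The main obstacle I expect is step (3): one must rule out that the $P$-reduced expression is strictly longer than $\ell(w)$, i.e., that $\ell^P$ genuinely equals $\ell$ on $P$-minimal elements rather than merely bounding it; the subtlety is that $P$-reducedness only constrains consecutive partial products via $w_j\alpha_{i_{j+1}} \in \Delta(\mfn,\mft)$, and one must show such a chain cannot "waste" steps — this is where a careful induction tracking which positive roots in $\Delta(\mfn,\mft)$ get sent negative is needed.
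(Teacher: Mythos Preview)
Your overall strategy is sound, but you have one factual error that turns your ``main obstacle'' into a non-obstacle. You write that ``a $P$-reduced expression need not be $B$-reduced,'' and you flag step (3) as the linchpin requiring a careful induction. This is false: by definition a $P$-reduced expression satisfies $w_j\alpha_{i_{j+1}}\in\Delta(\mfn,\mft)$ at each step, and since $\Delta(\mfn,\mft)\subset\Delta^+$, this immediately gives $w_j\alpha_{i_{j+1}}>0$, which is exactly the criterion for $B$-reducedness stated in the Proposition of Section~\ref{Reduced}. So $P$-reduced trivially implies $B$-reduced, $\ell^P(w)=\ell(w)=k$ for your $P$-minimal $w$, and step (3) dissolves. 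Once you know this, your steps (1), (2), (4), (5) go through: the classical Exchange Property applies directly to the given expression, $ws_\alpha$ is $P$-minimal (your argument for this can be made cleanly: if some $x\in W_L$ gave $\ell(xws_\alpha)<k-1$ then $\ell(xw)<k$, contradicting $P$-minimality of $w$), and the Lemma converts the resulting $B$-reduced expression into a $P$-reduced one.

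The paper's proof takes a somewhat different route for the final step. Rather than establishing that $ws_\alpha$ is $P$-minimal and invoking the Lemma, it argues by $P$-orbit dimension: the original $P$-reduced expression shows $\dim PwB - \dim P\cdot eB = k$, and $w\alpha\in\Delta(\mfn^-,\mft)$ forces $\dim Pws_\alpha B - \dim P\cdot eB = k-1$. The deleted expression has $k-1$ factors, and since each factor changes the $P$-orbit dimension by $+1$, $0$, or $-1$ according to whether the relevant root lies in $\Delta(\mfn,\mft)$, $\Delta(\mfl,\mft)$, or $\Delta(\mfn^-,\mft)$, every factor must contribute $+1$; hence the deleted expression is $P$-reduced. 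This dimension-counting argument is more geometric and avoids the separate verification of $P$-minimality, while your route is more combinatorial and leans on the Lemma already proved. Both are short once the trivial implication $P$-reduced $\Rightarrow$ $B$-reduced is recognized.
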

\begin{proof}
We know that there exists $j$ such that 
$s_{i_1} s_{i_2} \cdots \hat{s}_{i_j} \cdots s_{i_k}$ 
is a $B$-reduced expression for $ws_\alpha$.    Since $w \alpha \in 
\Delta( \mfn^-, \mft )$, therefore $PwB \neq Pws_\alpha B$, 
$\dim Pw s_\alpha B = \dim Pws_\alpha B - 1$, and $\ell( ws_\alpha ) = k-1$.  
Since, as we analyze each location in any expression, roots in $\mfl$ fix the 
corresponding orbit,  roots in $\mfn$ increase the orbit dimension, while roots in $\mfn^-$ 
decrease the orbit dimension, therefore each simple reflection in our 
expression must increase dimension, whence
$w s_\alpha = s_{i_1} s_{i_2} \cdots \hat{s}_{i_j} \cdots s_{i_k}$ 
must be a $P$-reduced expression as well.  Since $w = s_{i_1} \cdots s_{i_k}$ 
is a $P$-reduced expression for $w$, considering length, so must $w = s_{i_1} 
\cdots \hat{s}_{i_j} \cdots s_{i_k} s_\alpha$.
\end{proof}

\section{Bruhat Order on $K \backslash G / B$} \label{KGBSection}
Bruhat order for $\KGB$ may differ in ``direction'' in the literature due to a 
preference to associate the minimal length reduced expression with the open 
dense orbit since the open orbit is unique while the closed minimal dimension 
orbits generally are not.
\subsection{Parametrizing $\KGB$}
We use the parametrization of $K \backslash G / B$ as presented in
\cite{S}, which is an excellent reference.
Recall that $K = G^\theta$ and $B = TU$ is a $\theta$-stable Borel subgroup 
and Levi decomposition.
\begin{notation} Modifying Springer's parametrization for 
$B \backslash G / K$, we set:
\begin{itemize}
\item $\sV := \lbrace x \in G | x^{-1}\theta(x) \in N(T) \rbrace$
\item $V := K \times T$ -orbits on $\sV$: $(k,t) \cdot x = kxt^{-1}$
\item $\dot v \in \sV$ is a representative of $v$.
\end{itemize}
\end{notation}

\begin{proposition} \cite{S}
$V$ is in bijection with $\KGB$.
\end{proposition}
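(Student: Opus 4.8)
The plan is to transport both sides of the asserted bijection through Springer's twisting map and reduce the statement to a Bruhat-cell computation (for injectivity) together with one structural input about $\theta$-stable tori (for surjectivity).

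First I would introduce $\tau\colon G\to G$, $\tau(g)=g^{-1}\theta(g)$, and record its formal properties: $\tau$ is constant on left $K$-cosets and separates them (if $\tau(g_1)=\tau(g_2)$ then $g_2g_1^{-1}=\theta(g_2g_1^{-1})\in G^\theta=K$), and $\tau(gb)=b^{-1}\tau(g)\theta(b)$ for $b\in B$, while $\tau(gt)=t^{-1}\tau(g)\theta(t)$ for $t\in T$. Since $T$ is $\theta$-stable, $\theta$-twisted conjugation by $T$ preserves $N(T)$, and by definition $\sV=\tau^{-1}(N(T))$. Hence $\tau$ identifies $\KGB$ with the set of $\theta$-twisted $B$-conjugacy classes in the image $\tau(G)$, and identifies $V$ with the set of $\theta$-twisted $T$-conjugacy classes in $N(T)\cap\tau(G)$; under these identifications the map $V\to\KGB$, $x\mapsto KxB$, becomes ``send a $\theta$-twisted $T$-orbit to the $\theta$-twisted $B$-orbit it generates''. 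Well-definedness of $x\mapsto KxB$ on $(K\times T)$-orbits is immediate, as $K$ acts on the left and $T\subseteq B$ on the right, so it remains to prove this map is bijective.

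For injectivity, suppose $x,y\in\sV$ with $y=kxb$, $k\in K$, $b\in B$, so $\tau(y)=b^{-1}\tau(x)\theta(b)$. Write $b=tu$ with $t\in T$, $u\in U$; then $\theta(b)=\theta(t)\theta(u)$ with $\theta(t)\in T$, $\theta(u)\in U$. Since $xt$ lies in the $(K\times T)$-orbit of $x$ and $\tau(xt)=t^{-1}\tau(x)\theta(t)\in N(T)$, I may replace $x$ by $xt$ and thereby assume $b=u\in U$; writing $n:=\tau(x)\in N(T)$ we then have $u\,\tau(y)=n\,\theta(u)$ with $u,\theta(u)\in U$. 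The left side lies in $U\dot wT\subseteq B\dot wB$, where $w\in W$ is the class of $\tau(y)$, and the right side lies in $\dot{w'}UT\subseteq B\dot{w'}B$, where $w'$ is the class of $n$; disjointness of Bruhat cells (Proposition \ref{BGBBruhatDecomposition}) forces $w=w'$, hence $n=\tau(y)\,s$ for some $s\in T$. Then $\tau(y)^{-1}u\,\tau(y)=s\,\theta(u)$ lies in $\dot w^{-1}U\dot w\cap B\subseteq U$, so $s\in U\cap T=\{1\}$ and $n=\tau(y)$. By the fibre description of $\tau$ this gives $y\in Kx$, whence $y$ lies in the $(K\times T)$-orbit of $x$.

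For surjectivity, which carries the real content, I would first observe that $x\in\sV$ if and only if the maximal torus $xTx^{-1}$ is $\theta$-stable (indeed $\theta(xTx^{-1})=\theta(x)T\theta(x)^{-1}$ equals $xTx^{-1}$ exactly when $x^{-1}\theta(x)\in N(T)$). Since conjugation by $k\in K$ commutes with $\theta$ and $\{bTb^{-1}:b\in B\}$ is the full set of maximal tori of $B$, it follows that $KgB$ meets $\sV$ if and only if the Borel subgroup $B'=gBg^{-1}$ contains a $\theta$-stable maximal torus, so surjectivity reduces to the assertion that \emph{every} Borel subgroup of $G$ contains a $\theta$-stable maximal torus. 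For this I would use that $B'\cap\theta(B')$ is a closed, connected, solvable subgroup that is $\theta$-stable (as $\theta$ interchanges the two factors) and, being an intersection of two Borel subgroups, contains a maximal torus of $G$; then Steinberg's theorem that a $\theta$-stable connected solvable group contains a $\theta$-stable maximal torus --- the same input behind the $\theta$-stable choice of $(B,T)$ in Section \ref{Notation}, cf.\ \cite{S}, 2.3 --- yields a $\theta$-stable maximal torus of $G$ inside $B'\cap\theta(B')\subseteq B'$. The hard part is precisely this last step: the twisting bookkeeping and the injectivity computation are elementary, whereas surjectivity genuinely needs the Steinberg-type existence of stable maximal tori, together with the standard fact that two Borel subgroups share a maximal torus. (Alternatively, as $V$ is simply Springer's parametrization of $B\backslash G/K$ adapted to $\KGB$ in the manner described just before the statement, one may instead invoke \cite{S} wholesale; the sketch above is the route I would take if writing it out.)
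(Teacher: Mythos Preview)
The paper does not prove this proposition at all; it simply states it with the citation \cite{S} and moves on. Your proposal is a correct and essentially complete reconstruction of Springer's argument: the injectivity step via the Bruhat decomposition is clean (the key point being that $\dot w^{-1}U\dot w\cap B\subseteq U$ forces $s=1$), and your reduction of surjectivity to the existence of a $\theta$-stable maximal torus in an arbitrary Borel, followed by the appeal to Steinberg's result on $\theta$-stable connected solvable groups applied to $B'\cap\theta(B')$, is exactly the right structural input. Since the paper defers entirely to \cite{S}, there is nothing further to compare; your closing parenthetical already anticipates this.
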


\subsection{Real Forms and Root Types}
In order to discuss Bruhat order in detail, we must discuss real forms and 
root types.
A real form of the complex Lie algebra $\g$ is a real Lie subalgebra  $\g_0$
such that $\g = \g_0 \oplus i \g_0$.  A less obvious way to specify a real 
form is to select a Cartan involution $\theta$.  (Use the Cartan decomposition 
$\g = \mfk \oplus \mfs$ and the fact that the Killing form is positive 
definite on $i \mfk_0 \oplus \mfs_0$.)

We begin by studying the Cartan subalgebra.
\begin{lemma}(\cite{K}) \label{roottype}
Given any $\theta$-stable Cartan subalgebra $\mft$ and
Cartan decomposition $\mft_0 = \mft_0^c \oplus \mft_0^n$ of its real form,
it is known that roots $\alpha \in \Delta( \g, \mft)$ are
real-valued on $i \mft_0^c \oplus \mft_0^n$.
Thus $\theta \alpha = - \bar{\alpha}$.
\end{lemma}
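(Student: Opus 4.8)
The plan is to derive both assertions at once from the compact real form determined by $\theta$. Let $\g_0 = \mfk_0 \oplus \mfs_0$ be the Cartan decomposition cut out by $\theta$, let $\sigma$ be the conjugation of $\g$ with respect to $\g_0$, and let $\tau$ be the conjugation with respect to the compact real form $\mfu_0 = \mfk_0 \oplus i\mfs_0$; since $\theta$ and $\sigma$ commute one has $\tau = \sigma\theta = \theta\sigma$. (If $\g$ is reductive but not semisimple, first pass to $[\g,\g]$: the roots vanish on the center, which lies in $\mft$, so nothing is lost.) Since $\mft$ is $\theta$-stable and defined over $\bbR$ it is also $\tau$-stable, and it decomposes as $\mft = \mft_0^c \oplus i\mft_0^c \oplus \mft_0^n \oplus i\mft_0^n$, with $\theta$ acting as $+1$ on the first two summands and $-1$ on the last two, while $\sigma$ fixes $\mft_0 = \mft_0^c \oplus \mft_0^n$ and negates $i\mft_0$. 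Checking which summands lie in $\mfk_0 \oplus i\mfs_0$ gives $\mft \cap \mfu_0 = \mft_0^c \oplus i\mft_0^n$, hence $i(\mft \cap \mfu_0) = i\mft_0^c \oplus \mft_0^n$; moreover $\mft \cap \mfu_0$, being the fixed space of the conjugate-linear involution $\tau|_\mft$, is a real form of $\mft$.

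For the first assertion I would invoke the standard fact that on the compact Lie algebra $\mfu_0$ every root of $\g = (\mfu_0)^{\bbC}$ takes purely imaginary values on the torus $\mft \cap \mfu_0$ (the eigenvalues of $\ad H$ for $H$ in a compact Lie algebra being purely imaginary). Since each $\alpha \in \Delta(\g, \mft)$ is $\bbC$-linear, it is then real-valued on $i(\mft \cap \mfu_0) = i\mft_0^c \oplus \mft_0^n$, which is the claim.

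For the identity $\theta\alpha = -\bar\alpha$, write an arbitrary $H \in \mft$ as $H = H_1 + iH_2$ with $H_1, H_2 \in \mft \cap \mfu_0$. As $\alpha(H_1)$ and $\alpha(H_2)$ are purely imaginary, $\alpha(\tau H) = \alpha(H_1 - iH_2) = -\overline{\alpha(H)}$. Applying this with $\sigma H$ in place of $H$ and using $\tau\sigma = \sigma\theta\sigma = \theta$, I obtain $(\theta\alpha)(H) = \alpha(\theta H) = \alpha(\tau\sigma H) = -\overline{\alpha(\sigma H)} = -\bar\alpha(H)$, where $\bar\alpha$ is the functional $H \mapsto \overline{\alpha(\sigma H)}$. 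Since $H$ is arbitrary, $\theta\alpha = -\bar\alpha$.

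The step requiring the most care — though not a genuine obstacle, as this is textbook structure theory of $\theta$-stable Cartan subalgebras (cf.\ \cite{K}) — is tracking the four-part decomposition of $\mft$ together with the sign conventions for $\bar\alpha$ and for the action of $\theta$ on $\mft^*$. An alternative avoiding the compact form: equip $\g_0$ with the $\theta$-twisted Killing form $\langle X, Y \rangle = -\kappa(X, \theta Y)$, observe that $\ad H$ is skew-symmetric for $H \in \mft_0^c \subseteq \mfk_0$ and symmetric for $H \in \mft_0^n \subseteq \mfs_0$, deduce that $\alpha$ is purely imaginary on $\mft_0^c$ and real on $\mft_0^n$ and hence real on $i\mft_0^c \oplus \mft_0^n$, and then verify $\theta\alpha = -\bar\alpha$ by evaluating both (manifestly $\bbC$-linear) sides on the real form $i\mft_0^c \oplus \mft_0^n$ of $\mft$.
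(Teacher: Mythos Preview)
Your argument is correct. Note, however, that the paper does not supply its own proof of this lemma: it is stated with a citation to \cite{K} and the phrase ``it is known that,'' so there is no in-paper argument to compare against. What you have written is essentially the standard derivation one finds in Knapp---pass to the compact real form $\mfu_0 = \mfk_0 \oplus i\mfs_0$, identify $\mft \cap \mfu_0 = \mft_0^c \oplus i\mft_0^n$, use that roots are purely imaginary on a compact Cartan, and then unwind $\theta = \tau\sigma$ to get $\theta\alpha = -\bar\alpha$. Your alternative via the positive-definite form $-\kappa(\,\cdot\,,\theta\,\cdot\,)$ is equally standard and slightly more self-contained. Either route would serve as a fine expansion of the citation.
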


\begin{remark}
Recall that $\bar{\alpha}( X ) = \overline{\alpha( \bar{X})}$.
\end{remark}

\begin{definition}
Given $\mft$ a $\theta$-stable CSA, {\em relative to $\theta$}, $\alpha \in
\Delta( \g, \mft )$ is:
\begin{enumerate}
\item real if $\theta \alpha = - \alpha$
\item imaginary if $\theta \alpha = \alpha$
\item complex if $\theta \alpha \neq \pm \alpha$.
\end{enumerate}
\end{definition}

\begin{definition}
Given $g \in G$, recall $\alpha_g := \Ad_{g^{-1}}^* \alpha : \mft_g = \Ad_g
\mft \to \bbC$.  \em{Relative to $g$}, $\alpha$ is:
\begin{enumerate}
\item real if $\bar{\alpha}_g =  -\alpha_g$
\item imaginary if $\bar{\alpha}_g = \alpha_g$
\item complex if $\bar{\alpha}_g \neq \pm \alpha_g$.
\end{enumerate}
\end{definition}

\begin{notation}
Let $v \in V$ and let $n = \dot{v}^{-1}
\theta (\dot{v})$, and $w = nT$.
Since $v \in V$, it follows that $\theta(w) = w^{-1}$.
\end{notation}

We study the particular case where $g=\dot{v} \in \sV$.
\begin{lemma}(\cite{S})
If $v \in V$, then $\dot{v}T\dot{v}^{-1}$ is a $\theta$-stable
Cartan subgroup.
\end{lemma}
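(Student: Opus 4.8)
The plan is to observe that $\dot v T\dot v^{-1}$ is automatically a Cartan subgroup of $G$, being a $G$-conjugate of the maximal torus $T$, so the only content of the statement is its $\theta$-stability, and this will follow from a one-line computation exploiting the defining condition $\dot v^{-1}\theta(\dot v)\in N(T)$ of $\sV$.

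First I would recall from the setup in Section~\ref{Notation} that $B=TU$ was chosen to be $\theta$-stable, so in particular $\theta(T)=T$. Setting $n:=\dot v^{-1}\theta(\dot v)$, the hypothesis $v\in V$ (i.e.\ $\dot v\in\sV$) gives $n\in N(T)$, hence $\theta(\dot v)=\dot v\,n$ with $n$ normalizing $T$. The computation is then
\[
\theta(\dot v T\dot v^{-1})=\theta(\dot v)\,\theta(T)\,\theta(\dot v)^{-1}=\dot v\,n\,T\,n^{-1}\dot v^{-1}=\dot v T\dot v^{-1},
\]
using $\theta(T)=T$ and $nTn^{-1}=T$. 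This establishes $\theta$-stability, and together with the remark that a conjugate of a Cartan subgroup is again a Cartan subgroup, the lemma follows.

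The main obstacle is not a genuine difficulty but a bookkeeping point: one should check that the conclusion is insensitive to the choice of representative $\dot v$ of the orbit $v$. Since any other representative has the form $k\dot v t^{-1}$ with $k\in K$ and $t\in T$, one computes $(k\dot v t^{-1})T(k\dot v t^{-1})^{-1}=k(\dot v T\dot v^{-1})k^{-1}$; as $k\in K=G^\theta$, conjugation by $k$ commutes with $\theta$, so $\theta$-stability is preserved under change of representative. I would include this observation for completeness, noting that it also shows the $K$-conjugacy class of the Cartan subgroup $\dot v T\dot v^{-1}$ is a well-defined invariant of $v\in V$.
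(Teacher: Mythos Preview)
Your proof is correct; the one-line computation using $\theta(\dot v)=\dot v n$ with $n\in N(T)$ and $\theta(T)=T$ is exactly the right argument, and your remark on independence of the representative is a welcome addition. The paper itself gives no proof of this lemma, simply citing \cite{S}, so there is no alternative approach to compare against.
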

This allows us to describe root types using $\theta$.
\begin{definition}
If $v \in V$, then {\em relative to $v$} $\alpha$ is:
\begin{enumerate}
\item real if $\theta{\alpha}_v =  -\alpha_v$
\item imaginary if $\theta{\alpha}_v = \alpha_v$
\item complex if $\theta{\alpha}_v \neq \pm \alpha_v$.
\end{enumerate}
since $T_{\dot{v}}$ is $\theta$-stable.
\end{definition}

Equivalently,
\begin{definition} (\cite{S}) \label{KGBRootTypes}
{\em Relative to $v \in V$ (or $w = \dot{v}^{-1}\theta(\dot{v})T \in W_G$)}
$\alpha$ is:
\begin{enumerate}
\item real if $w\theta \alpha = - \alpha$
\item imaginary if $w\theta \alpha = \alpha$
\item complex if $w\theta \alpha \neq \pm \alpha$.
\end{enumerate}
\end{definition}

\begin{proposition}
The previous two definitions for real, complex, and imaginary are consistent.
\end{proposition}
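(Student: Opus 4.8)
The plan is to reduce the whole statement to a single identity in $\mft_{\dot v}^*$, namely
\[
\theta\,\alpha_v \;=\; (w\theta\alpha)_v ,
\]
where, as in the notation above, $n = \dot v^{-1}\theta(\dot v)$, $w = nT$, and $(\,\cdot\,)_v$ denotes the pullback $\Ad(\dot v^{-1})^*$. Granting this, the proposition is immediate: $\Ad(\dot v^{-1})^* : \mft^* \to \mft_{\dot v}^*$ is a linear isomorphism, so for $\varepsilon \in \{+1,-1\}$ the relation $\theta\alpha_v = \varepsilon\,\alpha_v$ holds if and only if $w\theta\alpha = \varepsilon\,\alpha$, and $\theta\alpha_v \notin \{\pm\alpha_v\}$ if and only if $w\theta\alpha \notin \{\pm\alpha\}$. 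Thus the ``real'', ``imaginary'', and ``complex'' alternatives in the first definition (the $\theta\alpha_v$ form) match those of Definition \ref{KGBRootTypes} (the $w\theta\alpha$ form) case by case.

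To prove the identity I would first note that, by the lemma that $T_{\dot v} = \dot v T \dot v^{-1}$ is $\theta$-stable, $\theta$ restricts to an involution of $\mft_{\dot v}$ and hence acts on $\mft_{\dot v}^*$, so $\theta\alpha_v$ is defined. Fix $X \in \mft_{\dot v}$ and write $X = \Ad(\dot v)Y$ with $Y \in \mft$. Using the intertwining relation $\theta\circ\Ad(g) = \Ad(\theta(g))\circ\theta$ together with $\theta(\dot v) = \dot v n$, one gets $\Ad(\dot v^{-1})\,\theta X = \Ad(n)\,\theta Y$, hence
\[
(\theta\alpha_v)(X) \;=\; \alpha_v(\theta X) \;=\; \alpha\bigl(\Ad(\dot v^{-1})\theta X\bigr) \;=\; \alpha\bigl(\Ad(n)\,\theta Y\bigr).
\]
By the Lemma stating $w\alpha = \alpha_n$ for any representative $n$ of $w$ — concretely $\alpha(\Ad(n)Z) = (w^{-1}\alpha)(Z)$ for $Z \in \mft$ — this equals $(w^{-1}\alpha)(\theta Y) = (\theta w^{-1}\alpha)(Y) = (\theta w^{-1}\alpha)_v(X)$. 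It remains to see $\theta w^{-1}\alpha = w\theta\alpha$. Here I use $\theta(w) = w^{-1}$ (valid since $v \in V$) and the general identity $\theta\circ u = \theta(u)\circ\theta$ on $\mft^*$ for $u \in W$, itself a restatement of $\theta\circ\Ad(h) = \Ad(\theta(h))\circ\theta$; applying it to $u = w^{-1}$ gives $\theta w^{-1} = \theta(w)^{-1}\theta = w\theta$ on $\mft^*$, so $\theta w^{-1}\alpha = w\theta\alpha$, completing the identity.

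The computation is elementary, so the ``hard part'' is only bookkeeping: one must fix a single convention for how $\theta$ and elements of $W$ act on $\mft^*$ versus $\mft$ and stick to it — the involutivity of $\theta$ makes pullback by $\theta$ versus by $\theta^{-1}$ harmless, but the order of composition of the $W$-action with $\theta$ genuinely matters and must be tracked — and one must use the group-level relation $\theta(\dot v) \in \dot v N(T)$, not merely $\theta(w) = w^{-1}$, when unwinding $\theta$ on $\mft_{\dot v}$. The one substantive ingredient beyond this is the identity $\theta\circ u = \theta(u)\circ\theta$, which encodes that conjugation by $\theta$ carries the action of $u \in W$ to that of $\theta(u)$; everything else is the pullback dictionary already set up in the $\BGB$ section.
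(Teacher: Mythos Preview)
Your proof is correct and follows essentially the same approach as the paper: both establish the identity $\theta\alpha_v = (w\theta\alpha)_v$ by unwinding the pullbacks, and then read off the equivalence of the three cases from the fact that $(\,\cdot\,)_v$ is a linear isomorphism. The paper carries out the computation at the group level (evaluating both sides at $t_1 \in T_{\dot v}$ and $t = \dot v^{-1}t_1\dot v \in T$) and arrives at the common value $\alpha(\dot v^{-1}\theta^{-1}(t_1)\dot v)$, whereas you work on the Lie algebra and isolate the step $\theta\circ w^{-1} = w\circ\theta$ (from $\theta(w)=w^{-1}$) as a separate observation; this is purely a packaging difference, and indeed the paper records your key identity immediately afterward as the corollary $w\theta\alpha = (\theta\alpha_v)_{v^{-1}}$.
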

\begin{proof}
Let $T_1 = vTv^{-1}$, which is $\theta$-stable because $v \in \sV$.  Then
$-\bar{\alpha}_v = \theta \alpha_v$.  Given $t_1 \in T_1$,
\begin{equation*}
\theta\alpha_v(t_1) = \alpha_v(\theta^{-1}(t_1)) = \alpha( v^{-1}
\theta^{-1}(t_1) v )
\end{equation*}
whereas for $t= v^{-1}t_1v \in T$,
\begin{eqnarray*}
w\theta\alpha(t) = \theta\alpha( n^{-1} t n ) &=& \alpha( \theta^{-1}(
\theta(v)^{-1}v )\theta^{-1}(t) \theta^{-1}( v^{-1}\theta(v) ) )  \\
&=&  \alpha( v^{-1} \theta^{-1}(vtv^{-1}) v) \\
&=& \alpha( v^{-1} \theta^{-1}(t_1) v ).
\end{eqnarray*}
Since $\alpha_{v}(t_1) = \alpha(t)$, we see therefore that
\begin{eqnarray*}
\theta\alpha_v(t_1) = \alpha_v(t_1) &\iff& w\theta\alpha(t) =
\alpha(t) \\
\theta\alpha_v(t_1) = -\alpha_v(t_1) &\iff& w\theta\alpha(t) =
-\alpha(t) \\
\theta\alpha_v(t_1) \neq \pm \alpha_v(t_1) &\iff& w\theta
\alpha(t) \neq \pm \alpha(t).
\end{eqnarray*}
\end{proof}

It follows from these computations that:
\begin{corollary} \label{wvCorollary}
For $\alpha \in \Pi$, $v \in V$, $w = \dot{v}^{-1} \theta( \dot{v} ) T \in W$,
$$w \theta \alpha = (\theta \alpha_v )_{v^{-1}}.$$
\end{corollary}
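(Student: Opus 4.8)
The plan is to reduce the identity $w\theta\alpha = (\theta\alpha_v)_{v^{-1}}$ to a direct unwinding of the definitions of the pullback operation $\beta \mapsto \beta_g = \Ad(g^{-1})^*\beta$ and the Weyl group action, using the explicit computations already performed in the proof of the preceding proposition. Recall that $w = nT$ with $n = \dot v^{-1}\theta(\dot v)$, and that by the lemma relating Weyl group action to pullback we have $w\beta = \beta_{\dot v^{-1}\theta(\dot v)}$ for any $\beta \in \mft^*$; in particular $w\theta\alpha = (\theta\alpha)_n$. So the first step is to observe that the left side is simply $(\theta\alpha)_n$, a root on $\mft_n = \Ad(n)\mft = \mft$ (since $n \in N(T)$).

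Next I would identify the right side. Here $\alpha_v = \Ad(\dot v^{-1})^*\alpha \in \mft_{\dot v}^*$, where $\mft_{\dot v} = \Ad(\dot v)\mft$; since $T_{\dot v} = \dot v T \dot v^{-1}$ is $\theta$-stable by the earlier lemma, $\theta\alpha_v$ makes sense as an element of $\mft_{\dot v}^*$, and then $(\theta\alpha_v)_{v^{-1}} = \Ad(\dot v)^*(\theta\alpha_v) \in \mft^*$ (pulling back along $v^{-1}$ means applying $\Ad((\dot v^{-1})^{-1})^* = \Ad(\dot v)^*$). So both sides are now genuinely elements of $\mft^*$, and the claim is the equality of two linear functionals on $\mft$. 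I would then evaluate both at an arbitrary $t \in \mft$ and chase the definitions: $(\theta\alpha)_n(t) = \theta\alpha(\Ad(n^{-1})t)$, while $(\theta\alpha_v)_{v^{-1}}(t) = \theta\alpha_v(\Ad(\dot v)t) = \alpha_v(\Ad(\dot v)\theta^{-1}(\dot v t \dot v^{-1})\cdot(\text{conjugates}))$, which after applying $\Ad(\dot v^{-1})$ collapses to $\alpha(\dot v^{-1}\theta^{-1}(\dot v t \dot v^{-1})\dot v)$. The point is that this is exactly the chain of equalities displayed in the proof of the preceding proposition (the computation of $w\theta\alpha(t)$ there, with $t_1 = \dot v t \dot v^{-1}$), so the two evaluations agree.

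The main obstacle I anticipate is purely bookkeeping: keeping straight which pullback is being applied and in which direction, i.e.\ whether subscript $v$ means $\Ad(\dot v^{-1})^*$ or $\Ad(\dot v)^*$, and similarly for $v^{-1}$; a sign error or inversion error here would produce $w^{-1}\theta\alpha$ or $w\theta\bar\alpha$ instead. The cleanest way to avoid this is to consistently use the formula $\beta_g(t) = \beta(\Ad(g^{-1})t)$ for $t$ in the target Cartan, never abbreviate, and note that $\theta$ on $\mft_{\dot v}^*$ is transported from $\theta$ on $T$ precisely via conjugation by $\dot v$ (since $\theta(\dot v t \dot v^{-1}) = \theta(\dot v)\theta(t)\theta(\dot v)^{-1}$ and $n = \dot v^{-1}\theta(\dot v)$). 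Once these conventions are pinned down, the corollary is immediate from the displayed computation in the previous proof, and indeed the statement already asserts it "follows from these computations," so the write-up can be quite short.
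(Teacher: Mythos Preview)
Your proposal is correct and matches the paper's approach exactly: the paper provides no separate proof at all, simply stating that the corollary follows from the computations in the proof of the preceding proposition, and your plan is precisely to unwind those displayed equalities (identifying $w\theta\alpha(t)$ with $\theta\alpha_v(t_1)$ for $t_1 = \dot v t \dot v^{-1}$, which is exactly $(\theta\alpha_v)_{v^{-1}}(t)$). Your discussion of the bookkeeping hazards is sound, and the short write-up you anticipate is all that is needed.
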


We may further distinguish imaginary roots as compact or noncompact.
\begin{definition}
Let $\alpha$ be an imaginary root.
Normalizing the one-parameter subgroup $x_\alpha$ appropriately, 
$$\theta( x_\alpha(\xi)) = x_{\theta\alpha}(c_\alpha \xi) = x_\alpha( c_\alpha
\xi) \quad\text{where }c_{\alpha} = \pm 1.$$
The root $\alpha$ is said to be {\em compact imaginary} if 
$c_\alpha=1$ and {\em noncompact imaginary} if $c_\alpha = -1$.
\end{definition}

\begin{definition}
Suppose the root $\alpha$ is imaginary relative to $v \in V$. Then,
normalizing $x_{\alpha_v}$ appropriately,
$$\theta( x_{\alpha_v}(\xi)) = x_{\theta{\alpha_v}}(c_{\alpha_v} \xi) = 
x_{\alpha_v}( c_{\alpha_v} \xi) \quad\text{where }c_{\alpha_v} = \pm 1.$$
We say that $\alpha$ is {\em compact relative to $v$} if $c_{\alpha_v} = 1$ 
and {\em noncompact} if $c_{\alpha_v} = -1$.
\end{definition}

Springer showed that if $v \in \sV$ and $n \in N(T)$, then $vn^{-1} \in \sV$, 
so there is a left $W_G$-action on $\sV$ and also on $V$ \cite{S}.
\begin{definition}
The {\em cross action} on $\KGB$ corresponds to Springer's $W_G$-action on $V$.
That is,
$$s_\alpha \times K\dot{v}B = K\dot{v}\dot{s}_\alpha^{-1}B.$$
\end{definition}

Suppose $\alpha \in \Pi$ is noncompact imaginary relative to $v \in V$.
In section 6.7 of \cite{S}, Springer defines the automorphism
$\psi(g) = \dot{v}^{-1} \theta( \dot{v}) \theta(g) \theta(\dot{v})^{-1}\dot{v}$.
We observe that this is simply $\theta_n(g) : = int(n) \circ \theta(g)$.  Since 
$\theta(n) = n^{-1}$, this is an involutive automorphism.
It is now easy to see, as Springer pointed out,
that $\psi$ descends to an involutive automorphism of $G_\alpha$, the subgroup
corresponding to $\pm \alpha$, since $\alpha$ is imaginary relative to
$\theta_n$.
Springer shows that $\psi( x_\alpha(m) ) = x_\alpha(-m)$,
$\psi(x_{-\alpha}(m) ) = x_{-\alpha}(m)$, and $\psi(\dot{s}_\alpha) =
\dot{s}_\alpha^{-1}$.  Springer claims that there is $z_\alpha \in G_\alpha$
such that $z_\alpha \psi(z_\alpha )^{-1} = \dot{s}_\alpha$.  We see we may
choose $z_\alpha =
x_\alpha(-1)x_{-\alpha}(1/2)$ since $z_\alpha\psi(z_\alpha)^{-1} =
x_\alpha(-1)x_{-\alpha}(1/2) x_{-\alpha}(1/2)x_\alpha(-1) = \dot{s}_\alpha$.
Then:
\begin{definition}\label{SpringerCayley}
Given $v \in V$, $\alpha \in \Pi$ noncompact imaginary relative to $v$, the 
{\em Cayley transform} of $v$ through $\alpha$ is
$$c^\alpha(K\dot{v}B) = K \dot{v} z_\alpha^{-1} B.$$
The Cayley transform is known to increase orbit dimension by $1$.
\end{definition}

In section 4.3 of \cite{RS}, one finds that: \\
\begin{tabular}{|l|l|l|}
\hline
Case & Type of $\alpha$ wrt $v$ & $s_\alpha \times v$ \\ \hline
i) & real & $s_\alpha \times v = v$ \\
ii) & compact imaginary & $s_\alpha \times v = v$ \\
iii) & complex & $s_\alpha \times v \neq v$ \\
iv.I) & noncompact imaginary type I & $s_\alpha \times v \neq v$ \\
iv.II) & noncompact imaginary type II & $s_\alpha \times v = v$ \\ \hline
\end{tabular} \\
Also,  \\
\begin{tabular}{|l|l|l|}
\hline
Case & Root Type of $\alpha_v$ & $\pi_\alpha^{-1}\pi_\alpha \cO_v$ \\ \hline
i.I) & real type I & $\cO_v \cup \cO_{v'}$ where $v = c^\alpha( v' )$ \\
i.II) & real type II & $\cO_v \cup \cO_{v'} \cup s_\alpha \times \cO_{v'}$
where $v = c^\alpha( v' ) = c^\alpha( s_\alpha \times v' )$ \\
ii) & compact imaginary & $\cO_v$ \\
iii.a) & complex downward & $\cO_v \cup s_\alpha \times \cO_v$  \\
iii.b) & complex upward & $\cO_v \cup s_\alpha \times \cO_v$  \\
iv.I) & noncompact imaginary type I & $\cO \cup s_\alpha \times \cO \cup
c^\alpha \cO$   \\
iv.II) & noncompact imaginary type II & $\cO \cup c^\alpha \cO$  \\ \hline
\end{tabular} \\
See \cite{PSY} for more detail on the choice of nomenclature. \\

Thus to simplify the definition of type I and type II roots, we define:
\begin{definition}
If the simple root $\alpha$ is noncompact imaginary relative to 
$v \in V$, then $\alpha$ is said to be:
\begin{enumerate}
\item type I relative to $v$ if $s_\alpha \times v \neq v$
\item type II relative to $v$ otherwise.
\end{enumerate}
We also define types I and II for real roots.
If $\alpha$ is real relative to $v$, then $\alpha$ is said to be:
\begin{enumerate}
\item type I relative to $v$ if $\alpha$ is type I relative to $c^\alpha(v)$
\item type II relative to $v$ otherwise.
\end{enumerate}
\end{definition}

We have a richer theory for certain type II roots.  Leading towards such 
results, we consider:
\begin{lemma}(\cite{S}, p. 527) \label{ThetaLemma}
Recall $\dot{s}_\alpha$ was defined using one-parameter subgroups.  Then:
\begin{itemize}
\item[i)] if $\alpha$ is real:
        $\theta( \dot{s}_\alpha ) = \dot{s}_{-\alpha}$
\item[ii)] if $\alpha$ is compact imaginary:
        $\theta(\dot{s}_\alpha) = \dot{s}_{\alpha}$
\item[iii)] if $\alpha$ is complex
        $\theta( \dot{s}_\alpha ) = \dot{s}_{\theta \alpha}$
\item[iv)] if $\alpha$ is noncompact imaginary:
        $\theta(\dot{s}_\alpha) = \dot{s}_{-\alpha}$.
\end{itemize}
\end{lemma}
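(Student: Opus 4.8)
The plan is to reduce all four identities to a computation inside the rank-one subgroup $G_\alpha$ generated by $\g_{\pm\alpha}$, via the homomorphism $\phi_\alpha\colon SL_2\to G$: the elements $\dot s_\alpha$, $x_\alpha(\xi)$, $x_{-\alpha}(\xi)$ all lie in the image of $\phi_\alpha$, and so, with the analogous normalization, does the element $\dot s_{-\alpha}:=x_{-\alpha}(-1)x_\alpha(1)x_{-\alpha}(-1)$. Since $T$ is $\theta$-stable, $\theta$ preserves $N(T)$ and sends $\g_\alpha$ to $\g_{\theta\alpha}$; hence in each case it suffices to pin down how $\theta$ acts on the one-parameter subgroups $x_{\pm\alpha}$ and then substitute into the defining identity $\dot s_\alpha=x_\alpha(-1)x_{-\alpha}(1)x_\alpha(-1)$. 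Two elementary $SL_2$ identities, immediate from the matrix description of $\phi_\alpha$ in the setup, will be used repeatedly: $x_\alpha(1)x_{-\alpha}(-1)x_\alpha(1)=\dot s_\alpha^{-1}$ (this is precisely the element named in Section~\ref{Notation}), and $\dot s_{-\alpha}=\dot s_\alpha^{-1}$.

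For the imaginary cases (ii) and (iv), $\theta\alpha=\alpha$, so $\theta$ preserves each of $\{x_\alpha(\xi)\}$ and $\{x_{-\alpha}(\xi)\}$; thus $\theta(x_\alpha(\xi))=x_\alpha(c_\alpha\xi)$ and $\theta(x_{-\alpha}(\xi))=x_{-\alpha}(c'_\alpha\xi)$ with $c_\alpha,c'_\alpha\in\{\pm 1\}$, where by definition $c_\alpha=1$ in the compact case and $c_\alpha=-1$ in the noncompact case. First I would check that $c'_\alpha=c_\alpha$: applying $\theta$ to $x_\alpha(1)x_{-\alpha}(-1)x_\alpha(1)\in N(T)$ gives $x_\alpha(c_\alpha)x_{-\alpha}(-c'_\alpha)x_\alpha(c_\alpha)\in N(T)$, and a direct $SL_2$ computation shows that this membership forces $c_\alpha c'_\alpha=1$. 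Now $\theta(\dot s_\alpha)=x_\alpha(-c_\alpha)x_{-\alpha}(c_\alpha)x_\alpha(-c_\alpha)$: for $c_\alpha=1$ this is $\dot s_\alpha$, which is case (ii); for $c_\alpha=-1$ it equals $x_\alpha(1)x_{-\alpha}(-1)x_\alpha(1)=\dot s_\alpha^{-1}=\dot s_{-\alpha}$, which is case (iv).

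For the real case (i) and the complex case (iii), $\theta$ carries the $\alpha$-root subgroup onto the $(-\alpha)$-root subgroup, resp.\ the $\theta\alpha$-root subgroup, so $\theta(x_{\pm\alpha}(\xi))=x_{\mp\alpha}(d\xi)$, resp.\ $x_{\pm\theta\alpha}(d\xi)$, for certain scalars; granting that these scalars equal $1$, substitution into the defining identity gives $\theta(\dot s_\alpha)=x_{-\alpha}(-1)x_\alpha(1)x_{-\alpha}(-1)=\dot s_{-\alpha}$ in case (i) (which may also be written $\dot s_\alpha^{-1}$), and $\theta(\dot s_\alpha)=\dot s_{\theta\alpha}$ in case (iii). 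The main obstacle is exactly the claim that those scalars are $1$: it cannot be decided inside $G_\alpha$ alone, since conjugating the relevant anti-diagonal (resp.\ permutation) matrix by $T\cap G_\alpha$ rescales the scalar, so there is genuine extra data, and one must invoke the global normalization of the family $\{x_\beta\}$. Concretely, since $B$ is $\theta$-stable, $\theta$ permutes $\Pi$, and by Steinberg (as in the setup) one may arrange, and Springer does arrange, the pinning $\{x_\beta\}_{\beta\in\Pi}$ so that $\theta(x_\beta(\xi))=x_{\theta\beta}(\xi)$; the opposite one-parameter subgroups are then pinned down compatibly by the relations $x_\beta(1)x_{-\beta}(-1)x_\beta(1)\in N(T)$, forcing $\theta(x_{-\beta}(\xi))=x_{-\theta\beta}(\xi)$ as well, which is what makes the scalars trivial. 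Carrying out this bookkeeping carefully — and, if one wants the four identities for roots that are real or complex only relative to a twisted involution $\theta_n=\Int(n)\circ\theta$, tracking in addition how $\Int(\dot w)$ moves the chosen root subgroups — is the delicate point, and is where I would follow Springer's computation on p.~527 of \cite{S}.
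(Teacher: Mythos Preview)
The paper does not supply its own proof of this lemma; it is simply quoted from Springer (\cite{S}, p.~527). Your approach---applying $\theta$ to the defining identity $\dot s_\alpha=x_\alpha(-1)x_{-\alpha}(1)x_\alpha(-1)$ and tracking how $\theta$ acts on the one-parameter subgroups $x_{\pm\alpha}$---is exactly the natural one and is essentially Springer's. Your treatment of the imaginary cases (ii) and (iv) is complete and correct: the argument that $c'_\alpha=c_\alpha$ via the normalizer condition is sound, and the $SL_2$ identities you quote (in particular $\dot s_{-\alpha}=\dot s_\alpha^{-1}$, which the paper itself records in Proposition~\ref{TypeITypeIIProp}(\ref{Eqn1})) are right.

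Your candor about cases (i) and (iii) is also appropriate. The scalars $d$ genuinely cannot be determined from the data in Section~\ref{Notation} alone; the identities $\theta(\dot s_\alpha)=\dot s_{\theta\alpha}$ and $\theta(\dot s_\alpha)=\dot s_{-\alpha}$ only hold once the pinning $\{x_\beta\}_{\beta\in\Pi}$ is chosen $\theta$-equivariantly, which is precisely Springer's normalization. One small observation you might add: since $B$ is $\theta$-stable in the paper's setup, $\theta$ permutes $\Pi$, so no \emph{simple} root can be real, and case~(i) is vacuous as literally stated. Its content only becomes nontrivial relative to a twisted involution $\theta_n=\Int(n)\circ\theta$, exactly the situation the paper turns to immediately afterward---and you correctly flag that tracking the normalizations through $\Int(\dot w)$ is then the delicate point.
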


Recall that $\dot{s}_\alpha = \phi_\alpha \left ( 
\begin{array}{cc} 0 &-1 \\ 1 & 0 \end{array} \right)$.  Since the matrix 
$\left(\begin{array}{cc} 0 &-1 \\ 1 & 0 \end{array} \right)$ has order $4$, 
therefore either $\dot{s}_\alpha^2 = 1$ or $\dot{s}_\alpha^4 = 1$.
\begin{notation}
Let $m_\alpha = \dot{s}_\alpha^2$.
\end{notation}
We are motivated by Lemma 14.11 of \cite{AD} to consider the following:
\begin{proposition} \label{TypeITypeIIProp}
Let $\alpha$ be a simple root such that $m_\alpha = 1$.  Then:
\begin{enumerate}
\item \label{Eqn1} $\dot{s}_{-\alpha} =  \dot{s}_\alpha = \dot{s}_\alpha^{-1}$,
\item $\alpha$ is type II relative to all orbits, and
\item all the roots in the Weyl group orbit of $\alpha$ must be of type II.
\end{enumerate}
\end{proposition}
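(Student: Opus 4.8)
The plan is to establish the three assertions in turn, the second being the only one with real content. For the first assertion, the explicit formulas of Section~\ref{Notation} give $m_\alpha = \dot{s}_\alpha^2 = \alpha^\vee(-1)$, so the hypothesis $m_\alpha = 1$ is literally the statement $\dot{s}_\alpha = \dot{s}_\alpha^{-1}$. Independently, applying the recipe $\dot{s}_\gamma = x_\gamma(-1)x_{-\gamma}(1)x_\gamma(-1)$ with $\gamma = -\alpha$ (so that the two coordinate unipotents of $\phi_\alpha(SL_2)$ exchange roles) is a one-line computation inside $\phi_\alpha(SL_2)$ yielding $\dot{s}_{-\alpha} = \dot{s}_\alpha^{-1}$ with no hypothesis at all; combining the two gives $\dot{s}_{-\alpha} = \dot{s}_\alpha^{-1} = \dot{s}_\alpha$.

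For the second assertion, fix $v \in V$; a type is defined only when $\alpha$ is real or noncompact imaginary relative to $v$, so those are the cases to treat. Suppose first $\alpha$ is noncompact imaginary relative to $v$, and let me show $s_\alpha \times v = v$. Put $n = \dot{v}^{-1}\theta(\dot{v}) \in N(T)$ and let $\theta_n = \Int(n)\circ\theta = \psi$ be the involution recalled above. I would unwind Springer's parametrization: $s_\alpha \times v = v$ means $\dot{v}\dot{s}_\alpha^{-1} = k\dot{v}t^{-1}$ for some $k \in K$, $t \in T$; writing $k = \dot{v}(\dot{s}_\alpha^{-1}t)\dot{v}^{-1}$ and using $\theta(\dot{v}) = \dot{v}n$ one computes $\theta(k) = \dot{v}\,\theta_n(\dot{s}_\alpha^{-1}t)\,\dot{v}^{-1}$, so the condition $k \in K$ becomes: there exists $t \in T$ with $\dot{s}_\alpha^{-1}t \in G^{\theta_n}$. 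But as observed above, $\theta_n(\dot{s}_\alpha) = \psi(\dot{s}_\alpha) = \dot{s}_\alpha^{-1}$ (equivalently, Lemma~\ref{ThetaLemma}(iv) applied to $\theta_n$ in place of $\theta$), which by the first assertion equals $\dot{s}_\alpha$. Hence $\dot{s}_\alpha \in G^{\theta_n}$, so $t = 1$ works and $s_\alpha \times v = v$, i.e. $\alpha$ is type II relative to $v$. If instead $\alpha$ is real relative to $v$, then $\alpha$ is noncompact imaginary relative to the orbit $c^\alpha(v)$ (Definition~\ref{SpringerCayley}) and $m_\alpha$ is still $1$, so by the previous case $\alpha$ is type II relative to $c^\alpha(v)$, hence type II relative to $v$ by definition.

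For the third assertion, let $\beta = w\alpha$ with $w \in W_G$. The coroot satisfies $\beta^\vee = w\alpha^\vee$, so $\beta^\vee(-1) = \dot{w}\,\alpha^\vee(-1)\,\dot{w}^{-1} = \dot{w}\,m_\alpha\,\dot{w}^{-1} = 1$; equivalently $m_\beta = 1$. The proof of the second assertion used $\alpha$ only through the facts $m_\alpha = 1$ and ``$\theta_n$ carries $\dot{s}_\alpha$ to $\dot{s}_{-\alpha} = \dot{s}_\alpha^{-1}$ when the root is imaginary relative to $\theta_n$'', neither of which used simplicity of $\alpha$, so rerunning that argument verbatim with $\beta$ shows that every root in the $W_G$-orbit of $\alpha$ is type II relative to every orbit for which it is noncompact imaginary or real.

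The one genuinely substantive step is the middle of the second assertion: pushing ``$s_\alpha \times v = v$'' through Springer's parametrization down to the clean group-level condition $\dot{s}_\alpha T \cap G^{\theta_n} \neq \emptyset$, and then noticing that $m_\alpha = 1$ upgrades this to the assertion that the specific representative $\dot{s}_\alpha$ itself lies in $G^{\theta_n}$. The first assertion and the coroot bookkeeping in the third are routine.
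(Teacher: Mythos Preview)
Your argument is correct and is essentially the paper's own proof in different coordinates. The paper conjugates by $\dot{v}$ and works with $\alpha_v$ and $\theta$: since $m_{\alpha_v} = \dot{v}\,m_\alpha\,\dot{v}^{-1} = 1$, part~(\ref{Eqn1}) plus Lemma~\ref{ThetaLemma}(iv) give $\theta(\dot{s}_{\alpha_v}) = \dot{s}_{-\alpha_v} = \dot{s}_{\alpha_v}$, so $\dot{s}_{\alpha_v}\in K$ and $K\dot{v}\dot{s}_\alpha^{-1}B = K\dot{s}_{\alpha_v}^{-1}\dot{v}B = K\dot{v}B$. You instead stay on the $\alpha$ side and twist the involution, using $\theta_n = \Int(\dot{v}^{-1})\circ\theta\circ\Int(\dot{v})$ and Springer's $\psi(\dot{s}_\alpha)=\dot{s}_\alpha^{-1}$; your condition $\dot{s}_\alpha\in G^{\theta_n}$ is literally $\dot{s}_{\alpha_v}\in K$, so the two arguments are the same computation.

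One small wording fix: in the real case you write ``$\alpha$ is noncompact imaginary relative to $c^\alpha(v)$'', but $c^\alpha$ as defined in Definition~\ref{SpringerCayley} goes from noncompact imaginary to real. What you mean is an inverse Cayley transform: a $v'$ with $c^\alpha(v')=v$, relative to which $\alpha$ is noncompact imaginary; then the definition of type for real roots reduces you to the case already handled.
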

\begin{proof}
Since
$\dot{s}_\alpha = \phi_\alpha \left( \begin{array}{cc} 0& -1 \\ 1 & 0 
\end{array} \right )$
while
$\dot{s}_{-\alpha} = \phi_\alpha \left( \begin{array}{cc} 0& 1 \\ -1 & 0 
\end{array} \right )$, we see that $\dot{s}_{-\alpha} = \dot{s}_\alpha^{-1}$.
Considering order, we obtain the first equation.

The condition $m_\alpha = 1$ is conjugation-invariant.  That is, $m_{\alpha_g}
= g m_\alpha g^{-1} = 1$ as well for all $g \in G$.  To prove the 
proposition, it suffices to prove that for any $v \in V$ relative to which 
$\alpha$ is noncompact imaginary, $s_\alpha \times K\dot{v}B = K\dot{v}B$. 
Observe that 
$\theta( \dot{s}_{\alpha_v} ) = \dot{s}_{\alpha_v}$ by Lemma \ref{ThetaLemma} 
and (\ref{Eqn1}) whence 
$\dot{s}_{\alpha_v} \in K$.  Then $K\dot{v} \dot{s}_\alpha^{-1} B = K 
\dot{s}_{\alpha_v}^{-1} \dot{v} B = K\dot{v}B$.

\end{proof}

\begin{remark}
The last two statements of the proposition follow directly from Lemma 14.11 
of \cite{AD} as well.
\end{remark}

\subsection{Cross Actions and Cayley Transforms} \label{KGBCC}
Simple relations for Bruhat order on $\KGB$ may be 
described by cross actions and Cayley transforms.

We list the results which may be found in \cite{RS}.

Recall:  either $\dim \pi_\alpha^{-1} \pi_\alpha ( \cO_v ) = \dim \cO_v$ or
$\dim \pi_\alpha^{-1} \pi_\alpha ( \cO_v ) = \dim \cO_v + 1$. \\
\begin{tabular}{|l|l|l|}
\hline 
Case & Root Type of $\alpha_v$ & $\dim \pi_\alpha^{-1}\pi_\alpha \cO_v$ \\ 
\hline
i.I) & real type I & same \\
i.II) & real type II & same \\
ii) & compact imaginary & same \\
iii.a) & complex downward & same \\
iii.b) & complex upward & +1 \\
iv.I) & noncompact imaginary type I & +1  \\
iv.II) & noncompact imaginary type II & +1  \\ \hline
\end{tabular} \\
\begin{tabular}{|l|l|l|l|l|}
\hline
Case & Root Type of $\alpha_v$ & $\pi_\alpha^{-1}\pi_\alpha \cO$ 
& Other Types & Bruhat Relation \\ \hline
iii.b) & complex & $\cO \cup s_\alpha \cO$ & $\alpha$ complex wrt. $s_\alpha 
\cO$ & $\cO \preceq^K_\alpha s_\alpha \cO$ \\
iv.I) & noncpt type I & $\cO \cup s_\alpha \cO \cup c^\alpha
\cO$  & $\alpha$ real type I wrt. $c^\alpha \cO$  &
$\cO \preceq^K_{\alpha} c^\alpha \cO$ \\
& & & 
$\alpha$ noncpt type I wrt. $s_\alpha \cO$ & 
$s_\alpha \cO \preceq^K_{\alpha} c^\alpha \cO$  \\
iv.II) & noncpt type II & $\cO \cup c^\alpha \cO$ 
& $\alpha$ real type II wrt. $c^\alpha \cO$ & 
$\cO \preceq^K_{\alpha} c^\alpha \cO$ \\ \hline
\end{tabular} 

\begin{theorem} \label{KGBCrossCayley}
If $v \preceq^K_{\alpha} v'$, then either:
\begin{itemize}
\item $v' = s_\alpha \times v$ where $\alpha$ is type iii.b) relative to $v$, 
or 
\item $v' = c^\alpha \times v$ where $\alpha$ is type iv) relative to $v$.
\end{itemize}
\begin{tabular}{|l|c|l|l|}
\hline
$\cO_v$ type & $\preceq^K_{\alpha}$ & $\cO_v'$ type & Relationship \\
\hline
iv.I) & $\preceq^K_{\alpha}$ & i.I) & $\cO_{v'} = c^\alpha \cO_v$ \\
iv.II) & $\preceq^K_{\alpha}$ & i.II) & $\cO_{v'} = c^\alpha \cO_v$ \\
iii.b) & $\preceq^K_{\alpha}$ & iii.a) & $\cO_{v'} = s_\alpha
        \cO_v$ \\ \hline
\end{tabular} 
\end{theorem}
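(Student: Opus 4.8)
The plan is to derive the statement directly from the two classification tables of \cite{RS} reproduced just above — the one giving $\dim \pi_\alpha^{-1}\pi_\alpha(\cO_v)$ and the one expressing $\pi_\alpha^{-1}\pi_\alpha(\cO_v)$ as a union of orbits — together with the elementary dimension count coming from the $\bbP^1$-fibration. First I would unwind the hypothesis $v \preceq^K_\alpha v'$. By the definition of the simple relation $\alphato$ and Remark \ref{RemarkAlphaTo}(2), this says precisely that $\cO_{v'}$ is the \emph{unique dense} orbit of $\pi_\alpha^{-1}\pi_\alpha(\cO_v) = \pi_\alpha^{-1}\pi_\alpha(\cO_{v'})$ and that $\dim \cO_{v'} = \dim \cO_v + 1$. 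In particular $\cO_v$ is not the dense orbit, so $\dim \pi_\alpha^{-1}\pi_\alpha(\cO_v) = \dim \cO_v + 1 > \dim \cO_v$.

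Next I would feed this strict inequality into the dimension table: $\dim \pi_\alpha^{-1}\pi_\alpha(\cO_v)$ strictly exceeds $\dim \cO_v$ exactly when $\alpha$ is complex upward (type iii.b), noncompact imaginary type I (iv.I), or noncompact imaginary type II (iv.II) relative to $v$. This already produces the asserted dichotomy between ``$\alpha$ of type iii.b relative to $v$'' and ``$\alpha$ of type iv) relative to $v$'', and it remains to name the dense orbit $\cO_{v'}$ in each of these three cases, which is what the orbit-decomposition table records. If $\alpha$ is of type iii.b relative to $v$, the fibre is $\cO_v \cup s_\alpha\times\cO_v$, so $\cO_{v'} = s_\alpha\times\cO_v$; since $\alpha$ remains complex relative to $s_\alpha\times v$ (cases iii.a/iii.b of the decomposition table) and $\dim \cO_{v'} = \dim\cO_v+1$, it is complex downward, i.e.\ of type iii.a, relative to $v'$. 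If $\alpha$ is of type iv.I or iv.II relative to $v$, then $c^\alpha(v)$ is defined and $\dim c^\alpha\cO_v = \dim\cO_v + 1$ by Definition \ref{SpringerCayley}; the decomposition table shows $c^\alpha\cO_v$ lies in the fibre, and the dimension count identifies it as the unique orbit of top dimension, hence the dense one, so $\cO_{v'} = c^\alpha\cO_v$. Relative to $v'$ the root $\alpha$ is real, and it is of type I (respectively type II) by the paper's very definition of type I/II for real roots. Assembling these three cases gives both the displayed dichotomy and every row of the relationship table.

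The only point requiring care — and the part I expect to be the main obstacle, though a bookkeeping one rather than a conceptual one — is matching up the different ``perspectives'' under which a single fibre $\pi_\alpha^{-1}\pi_\alpha(\cO_v)$ is listed: the same fibre appears once with $\cO_v$ singled out (so $\alpha$ is, say, of type iv.I relative to $v$ and the fibre is $\cO_v \cup s_\alpha\times\cO_v \cup c^\alpha\cO_v$) and once with its dense orbit singled out (so $\alpha$ is real relative to $v' = c^\alpha(v)$). One must verify these descriptions are consistent, in particular that ``$\alpha$ of type iv.I relative to $v$'' forces ``$\alpha$ real of type i.I relative to $v'$''. Here I would simply invoke the paper's definition — the type of a real root relative to $v'$ is \emph{defined} to be the type of the noncompact imaginary root relative to $c^\alpha(v')$ — so the matching is tautological and no independent computation is needed; I would remark on it rather than belabour it. I do not anticipate any further difficulty, since the substantive geometric input (the classification of positive-dimensional subgroups of $PSL_2(\bbC)$ acting on $\bbP^1$, Knop's finiteness argument, and Springer's construction of the Cayley transform) has already been quoted above and is exactly what the two tables encode.
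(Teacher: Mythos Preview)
Your proposal is correct and matches the paper's treatment: the paper states Theorem \ref{KGBCrossCayley} without proof, immediately after reproducing the two tables from \cite{RS}, precisely because the theorem is a direct reading of those tables together with the definition of $\alphato$. Your write-up makes explicit the bookkeeping the paper leaves implicit, and your one point of care (matching the type of $\alpha$ relative to $v'$ via the tautological definition of real type I/II) is exactly right.
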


Is there a simple means of explaining when we have an increase or a decrease 
in the Bruhat order for $\KGB$?  We will see shortly that the answer is yes.

\subsection{Weyl Group and Roots}
\begin{theorem} \label{KGBBruhat}
Let $v \in V$ and $\alpha \in \Pi$.  Recall that $w = \dot{v}^{-1}
\theta(\dot{v})T \in W_G$.  Simple relations for Bruhat order on 
$\KGB$ may be formulated by the existence of $v' \in V$ such that: \\
\begin{tabular}{|l|l|}
\hline
$\cO_v \preceq^K_{\alpha} \cO_{v'}$ & 
iff $w \theta \alpha > 0$ and $\g_{\alpha_v} \not \subset \mfk$ \\
$\cO_v \succeq^K_{\alpha} \cO_{v'}$ & 
iff $w \theta \alpha < 0$ and $\g_{\alpha_v} \not \subset \mfk$. \\ \hline
\end{tabular} \\
If $\cO_v \preceq^K_\alpha \cO_{v'}$, then $v' = s_\alpha \times v$ if 
$\alpha$ is complex relative to $v$ and $v' = c^\alpha( v)$ if $\alpha$ is 
noncompact relative to $v$.

We note that this description of Bruhat order is analogous to the 
descriptions for $\BGB$ and for $\PGB$ as follows.  The reductive subalgebra 
$\mfk$ plays an analogous role to $\mfl$ in $\PGB$ and to $\mft$ in $\BGB$.
Furthermore, in the cases $\BGB$ and $\PGB$, $\theta = \Id$.
\end{theorem}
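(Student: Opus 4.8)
The plan is to reduce everything to the case-by-case analysis of $\pi_\alpha^{-1}\pi_\alpha(\cO_v)$ already tabulated in Section \ref{KGBCC}, and to identify the combinatorial condition ``$w\theta\alpha > 0$ and $\g_{\alpha_v} \not\subset \mfk$'' with exactly the cases iii.b) and iv) where the fibre has dimension $\dim\cO_v + 1$. First I would translate the hypotheses into statements about $\alpha_v$: by Corollary \ref{wvCorollary} we have $w\theta\alpha = (\theta\alpha_v)_{v^{-1}}$, and since pullback by $v^{-1}$ preserves positivity of the root exactly when $\alpha_v$ is untwisted appropriately — more precisely, $w\theta\alpha > 0 \iff \alpha_v$ and $\theta\alpha_v$ are in the correct relative position — I would spell out that the sign of $w\theta\alpha$ records whether $\alpha$ is ``upward'' or ``downward'' relative to $v$. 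The condition $\g_{\alpha_v} \subset \mfk$, meanwhile, is precisely the condition that $\alpha$ is compact imaginary relative to $v$ (case ii), since $\g_{\alpha_v} \subset \mfk = \g^\theta$ means $\theta$ acts trivially on the root space, i.e. $c_{\alpha_v} = 1$. So ``$\g_{\alpha_v} \not\subset\mfk$'' rules out exactly case ii), leaving real (i.I, i.II), complex (iii.a, iii.b), and noncompact imaginary (iv.I, iv.II).

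Next I would go through the remaining cases one at a time, matching the sign of $w\theta\alpha$ against the tables. For complex roots: using Lemma \ref{ThetaLemma}(iii), $\theta\alpha_v = (\theta\alpha)_{\text{stuff}}$ is again a root, and complex upward (iii.b) corresponds to $w\theta\alpha > 0$ while complex downward (iii.a) corresponds to $w\theta\alpha < 0$; the first table in Section \ref{KGBCC} then gives $\dim\pi_\alpha^{-1}\pi_\alpha\cO_v = \dim\cO_v + 1$ in case iii.b) and ``same'' in case iii.a), so the definition of $\alphato$ yields $\cO_v \preceq^K_\alpha \cO_{v'}$ with $v' = s_\alpha\times v$ exactly when $w\theta\alpha > 0$. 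For imaginary roots, $\theta\alpha = \alpha$ means $w\theta\alpha = w\alpha$; noncompact imaginary ($c_{\alpha_v} = -1$, hence $\g_{\alpha_v}\not\subset\mfk$) gives a dimension increase (cases iv.I, iv.II in the table) and the Cayley transform $c^\alpha$ raises dimension by $1$ (Definition \ref{SpringerCayley}), so here $\cO_v \preceq^K_\alpha \cO_{v'}$ with $v' = c^\alpha(v)$; I must check that the sign condition $w\theta\alpha > 0$ is automatically satisfied or is the correct discriminator — this requires care, since relative to the CSA $\mft_v$ a noncompact imaginary $\alpha$ could a priori be positive or negative, but $w\theta\alpha = w\alpha$ and $\alpha\in\Pi$ is simple, so one examines whether $w\alpha > 0$; the resolution is that when $\alpha$ is imaginary and noncompact the Cayley transform is the upward direction and $w\theta\alpha = w\alpha > 0$ in that situation, while the real case i.I)/i.II) (obtained from iv by Cayley, hence $w\theta\alpha = w\alpha < 0$ by Lemma \ref{ThetaLemma}(i) flipping the sign) is the downward direction. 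Finally the real cases themselves: there $\theta\alpha = -\alpha$ so $w\theta\alpha = -w\alpha$, and the table shows no dimension increase, consistent with $\cO_v \succeq^K_\alpha \cO_{v'}$.

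The last paragraph of the statement — the analogy with $\BGB$ and $\PGB$ — I would dispatch by observing that when $\theta = \Id$ we have $K = G$, $w = \dot v^{-1}\theta(\dot v)T = T$ is trivial, every root is ``real'' in the degenerate sense that... actually, more cleanly: $\theta = \Id$ forces $w\theta\alpha = \alpha$ reduced appropriately, and the condition $\g_{\alpha_v}\subset\mfk = \g$ is always true, so the relation degenerates; for the correct comparison one instead notes that in $\PGB$ the role of ``$\g_{\alpha_v}\subset\mfk$ (no relation)'' is played by ``$w\alpha\in\Delta(\mfl,\mft)$ (no relation)'' from Theorem \ref{PGBBruhat}, and ``$w\theta\alpha \gtrless 0$'' degenerates to ``$w\alpha\in\Delta(\mfn^{\pm},\mft)$'' — this is a dictionary, not a theorem, so I would simply state the correspondence $\mfk\leftrightarrow\mfl\leftrightarrow\mft$ and $\theta\leftrightarrow\Id$ without further proof, as the statement itself phrases it as a remark (``We note that...'').

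The main obstacle I expect is pinning down the sign bookkeeping for the imaginary/real dichotomy: one must verify that $w\theta\alpha > 0$ is genuinely equivalent to ``Cayley-upward'' for noncompact imaginary $\alpha$ and to ``not a relation in this direction'' for real $\alpha$, which means tracking carefully how $\theta$ and the pullback by $v$ interact with positivity — precisely the content of Lemma \ref{ThetaLemma} and Corollary \ref{wvCorollary}, but the combination of conjugating by $n$ and applying $\theta$ is where sign errors creep in. Everything else is reading off the two tables in Section \ref{KGBCC}.
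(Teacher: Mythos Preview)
Your overall strategy---reduce to the tabulated cases of Section \ref{KGBCC} and match each against the combinatorial condition---is exactly the paper's approach. But the execution has one systematic confusion and one genuine gap.

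\textbf{The confusion.} Throughout the imaginary and real discussion you write ``$\theta\alpha = \alpha$'' and ``$\theta\alpha = -\alpha$'', but these are the \emph{absolute} notions. The hypothesis is that $\alpha$ is imaginary/real \emph{relative to $v$}, which by Definition \ref{KGBRootTypes} means $w\theta\alpha = \alpha$ (resp.\ $w\theta\alpha = -\alpha$). Once you use the correct definition, the imaginary and real cases become trivial: $\alpha \in \Pi$ is simple hence positive, so $w\theta\alpha = \alpha > 0$ in the imaginary case and $w\theta\alpha = -\alpha < 0$ in the real case, full stop. There is nothing to check about ``whether $w\alpha > 0$'', and Lemma \ref{ThetaLemma} plays no role here. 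The paper dispatches these two cases in one sentence (``follow immediately from definitions''); your detour through $w\alpha$ is based on the wrong formula.

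\textbf{The gap.} For the complex case you write that ``complex upward (iii.b) corresponds to $w\theta\alpha > 0$ while complex downward (iii.a) corresponds to $w\theta\alpha < 0$'' and then read off the table. But the tables in Section \ref{KGBCC} label the cases ``upward/downward'' by dimension behaviour, not by the sign of $w\theta\alpha$; proving that these agree is precisely the content of the theorem in the complex case. The paper supplies the missing link via a length computation: since $B$ is $\theta$-stable one has $\theta\Delta^+ = \Delta^+$ and $\theta(w) = w^{-1}$, so $w\theta\alpha > 0$ implies $w^{-1}\alpha = \theta(w\theta\alpha) > 0$, whence $\ell(s_\alpha w) = \ell(w)+1$; and since $\alpha$ is complex ($w\theta\alpha \neq \pm\alpha$) also $s_\alpha w\theta\alpha > 0$, giving $\ell(s_\alpha w s_{\theta\alpha}) = \ell(w)+2$. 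The case analysis in \cite{RS} 4.3 then identifies this length-$+2$ situation with the dimension-increasing (``upward'') orbit. Without this step your complex case is an assertion, not an argument.
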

\begin{proof}
Consider the following table: \\
\begin{tabular}{|l|l|l|l|}
\hline
Case & Root Type of $\alpha_v$ & $\dim \pi_\alpha^{-1}\pi_\alpha \cO$ &
Combinatorial Description\\ \hline
i.I) & real & same & $w\theta \alpha = -\alpha < 0$ \\
i.II) & real & same & $w\theta \alpha = -\alpha < 0$ \\
ii) & compact imaginary & same & $w\theta \alpha = \alpha > 0$ but
$\g_{\alpha_v} \subset \mfk$ \\
iii.a) & complex & same & $w\theta\alpha < 0$ \\
iii.b) & complex & +1 & $w\theta\alpha > 0$, $\g_{\alpha_v} \not \subset
\mfk$ \\
iv.I) & noncompact imaginary type I & +1 & $w\theta\alpha = \alpha > 0$,
$\g_{\alpha_v} \not \subset \mfk$ \\
iv.II) & noncompact imaginary type II & +1 & $w\theta\alpha = \alpha > 0$,
$\g_{\alpha_v} \not \subset \mfk$
 \\ \hline
\end{tabular} \\
The real and imaginary cases follow immediately from definitions.

In the complex case, either $w \theta \alpha > 0$ or $w \theta\alpha < 0$.
Since $B$ is $\theta$-stable, therefore $\theta \Delta^+ = \Delta^+$.
If $w \theta \alpha > 0$, then  $\theta( w \theta \alpha ) > 0$.  Since
$\theta(w) = w^{-1}$, therefore $\theta( w \theta \alpha ) =
w^{-1} \alpha$.  From $w^{-1} \alpha > 0$, we conclude that
$\ell( s_\alpha w ) = \ell(w) + 1$.  From $\theta \Delta^+ = \Delta^+$,
we also conclude that $\theta \Pi = \Pi$,  whence $\theta\alpha$ is a
simple root.
Since $\alpha$ is complex relative to $v$ so that $w \theta \alpha \neq
\pm \alpha$, therefore $s_\alpha w \theta \alpha > 0$, whence
$\ell( s_\alpha w s_{\theta\alpha} ) = \ell(s_\alpha w) + 1 = \ell(w)+2$.
Similarly, if $w\theta\alpha < 0$, then $\theta( w \theta\alpha ) = w^{-1}
\alpha < 0$ as well and $\ell(  s_\alpha w s_{\theta\alpha} ) =
\ell(s_\alpha w) - 1 = \ell(w)-2$.  The complex case now follows from
the case analysis in 4.3 of \cite{RS}.
\end{proof}

\subsection{Roots and Pullbacks}
\begin{theorem}
Let $v \in V$ and $\alpha \in \Pi$.  Simple relations for Bruhat order on 
$\KGB$ may be formulated by the existence of $v' \in V$ such that: \\
\begin{tabular}{|l|l|}
\hline
$\cO_v \preceq^K_{\alpha} \cO_{v'}$ & 
if $\theta \alpha_v > 0$ (i.e. $\in \Delta^+(\g, \mft)_v$) and 
$\g_{\alpha_v} \not \subset \mfk$ \\
$\cO_v \succeq^K_{\alpha} \cO_{v'}$ & 
if $\theta \alpha_v < 0$ and $\g_{\alpha_v} \not \subset \mfk$. \\ \hline
\end{tabular}
\end{theorem}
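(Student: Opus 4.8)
The plan is to obtain this statement from Theorem~\ref{KGBBruhat} purely by a change of language, replacing the Weyl group element $w = \dot{v}^{-1}\theta(\dot{v})T$ by pullbacks along $\Ad(\dot{v})$. The second hypothesis, $\g_{\alpha_v}\not\subset\mfk$, is already written identically in both formulations, so the only thing to check is that ``$w\theta\alpha > 0$'' and ``$\theta\alpha_v > 0$'' express the same condition, and likewise with both inequalities reversed.

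First I would note that $\theta\alpha_v$ really is a root of $\mft_v = \Ad(\dot{v})\mft$: the subalgebra $\mft_v$ is $\theta$-stable because $\dot{v}T\dot{v}^{-1}$ is a $\theta$-stable Cartan subgroup, and $\alpha_v\in\Delta(\g,\mft_v)$, so $\theta\alpha_v\in\Delta(\g,\mft_v)$ as well. By Corollary~\ref{wvCorollary}, $w\theta\alpha = (\theta\alpha_v)_{v^{-1}}$. Next I would invoke the pullback bookkeeping from Section~\ref{BGB}: for any $g\in G$, the assignment $\gamma\mapsto\gamma_g$ is a bijection $\Delta(\g,\mft)\to\Delta(\g,\mft_g)$ carrying $\Delta^+ = \Delta(\mfu,\mft)$ onto $\Delta(\mfu_g,\mft_g) = \Delta^+(\g,\mft)_g$ --- since $\Ad(g)\g_\gamma = \g_{\gamma_g}$ and $\Ad(g)\mfu = \mfu_g$ --- and its inverse is $\delta\mapsto\delta_{g^{-1}}$, because $\Ad(g)^*\circ\Ad(g^{-1})^* = \Id$ on $\mft^*$ by contravariance of pullback. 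Taking $g = \dot{v}$ and $\delta = \theta\alpha_v$ gives
$$\theta\alpha_v \in \Delta^+(\g,\mft)_v \iff (\theta\alpha_v)_{v^{-1}}\in\Delta^+ \iff w\theta\alpha > 0,$$
and since $\theta\alpha_v$ and $w\theta\alpha$ are roots, the complementary statement $\theta\alpha_v < 0 \iff w\theta\alpha < 0$ follows at once. Substituting these two equivalences into the two rows of Theorem~\ref{KGBBruhat} yields precisely the table asserted here (and, exactly as in Theorem~\ref{KGBBruhat}, one may still read off that $v' = s_\alpha\times v$ when $\alpha$ is complex relative to $v$ and $v' = c^\alpha(v)$ when $\alpha$ is noncompact relative to $v$).

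I do not anticipate a real obstacle: the whole argument is a matter of keeping track of which Cartan subalgebra and which positive system each root lives in, plus the contravariance of pullback. The one subtlety worth flagging explicitly is that ``$\theta\alpha_v > 0$'' means membership in the transported positive system $\Delta^+(\g,\mft)_v$, not in the fixed $\Delta^+$, and that it is the operation $(\cdot)_{v^{-1}}$ --- not $(\cdot)_v$ --- that carries a root of $\mft_v$ back to a root of $\mft$; getting either of these backwards would reverse the inequality.
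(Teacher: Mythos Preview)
Your proposal is correct and follows the same approach as the paper: the paper's proof is the single sentence ``This follows from the previous theorem and from Corollary~\ref{wvCorollary},'' and you have simply unpacked what that entails, verifying that $(\cdot)_{v^{-1}}$ identifies $\Delta^+(\g,\mft)_v$ with $\Delta^+$ so that $\theta\alpha_v>0$ and $w\theta\alpha>0$ are equivalent. The added care about which positive system is meant and the direction of the pullback is accurate and does no harm.
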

\begin{proof}
This follows from the previous theorem and from Corollary \ref{wvCorollary}.
\end{proof}

\subsection{$\KGB$ in More Depth}
We review the discussion of monoids in \cite{RS} and study how we may 
specify elements of $V$ by the monoidal action using our combinatorial results.

\begin{definition} (\cite{RS}, 3.10)
Given the Coxeter group $(W,S)$, the monoid $M(W$) has generators $m(s)$
($s \in S$) and the relations:
\begin{enumerate}
\item $m(s)^2 = m(s)$ $\quad s \in S$; \label{SquareCondition}
\item braid relations:  if $s, t \in S$ are distinct, then
\begin{enumerate}
\item $o(st) = 2k$:  $\quad (m(s) m(t))^k = (m(t)m(s))^k$
\item $o(st) = 2k+1$:  $\quad (m(s)m(t))^k m(s) = (m(t)m(s))^k m(t)$.
\end{enumerate}
\end{enumerate}
\end{definition}

\begin{proposition} (\cite{RS}, 3.10)
\begin{enumerate}
\item If $w = s_1 s_2 \ldots s_\ell$ is a reduced decomposition of $w$, then 
$m(w) := m(s_1) m(s_2) \cdots m(s_\ell) \in M(W)$ is independent of the reduced 
decomposition chosen.
\item $M(W) = \lbrace m(w) : w \in W \rbrace$.
\item $m(w) m(s) = \left\lbrace
\begin{array}{ll}
m(ws) & \text{ if }ws > w \\
m(w) & \text{ if }ws < w.
\end{array}
\right .$
\end{enumerate}
\end{proposition}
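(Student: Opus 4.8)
The plan is to derive all three parts from two standard structural facts about the Coxeter group $(W,S)$ --- the word property (any two reduced words for a given element are connected by braid moves) and the strong exchange/deletion condition --- together with the defining relations of $M(W)$. I would establish part (1) first, then isolate the single computation that constitutes part (3), and finally bootstrap part (2) from it by induction on word length.

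For part (1): by the word property (Tits--Matsumoto; see \cite{H}), any two reduced decompositions of a fixed $w\in W$ differ by a finite chain of braid moves, each of which replaces an alternating block $sts\cdots$ of length $o(st)$ by the block $tst\cdots$ of the same length. The braid relations imposed on $M(W)$ --- namely $(m(s)m(t))^k=(m(t)m(s))^k$ when $o(st)=2k$ and $(m(s)m(t))^km(s)=(m(t)m(s))^km(t)$ when $o(st)=2k+1$ --- say precisely that replacing such a block within a word leaves the corresponding product $m(s_1)m(s_2)\cdots m(s_\ell)$ unchanged in $M(W)$. Hence $m(w)$ is independent of the chosen reduced decomposition. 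Note that the idempotent relation $m(s)^2=m(s)$ is not used here, since braid moves never create adjacent equal generators.

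Next I would record the key identity, which is exactly part (3): for $v\in W$ and $s\in S$,
$$m(v)m(s)=\begin{cases} m(vs) & \text{if } vs>v,\\ m(v) & \text{if } vs<v.\end{cases}$$
If $vs>v$, then appending $s$ to any reduced word for $v$ yields a reduced word for $vs$, so $m(v)m(s)=m(vs)$ by part (1). If $vs<v$, the strong exchange (deletion) condition --- used already in the proof of Theorem \ref{BGBBruhat} --- supplies a reduced word for $v$ ending in $s$, i.e.\ $v=us$ with $\ell(u)=\ell(v)-1$; then $m(v)=m(u)m(s)$ by part (1), so $m(v)m(s)=m(u)m(s)m(s)=m(u)m(s)=m(v)$ using $m(s)^2=m(s)$.

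For part (2), an arbitrary element of $M(W)$ is a product $m(s_{i_1})m(s_{i_2})\cdots m(s_{i_k})$ of generators; inducting on $k$, the prefix $m(s_{i_1})\cdots m(s_{i_{k-1}})$ equals $m(v)$ for some $v\in W$, and the key identity then shows $m(v)m(s_{i_k})$ equals $m(vs_{i_k})$ or $m(v)$ --- in either case of the form $m(u)$. Since $\{m(w):w\in W\}\subseteq M(W)$ is immediate, this gives $M(W)=\{m(w):w\in W\}$. I do not expect a genuine obstacle: once the word property and deletion condition are available the argument is bookkeeping. The two points worth care are matching the two parity cases of the braid relations of $M(W)$ with the braid moves of the word property, and being disciplined about where the idempotent relation enters --- only in the length-decreasing step, and only after the deletion condition has produced a reduced word terminating in the relevant generator.
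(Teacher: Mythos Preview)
Your argument is correct and is the standard proof of this fact about the $0$-Hecke monoid. However, the paper does not supply its own proof of this proposition: it is merely quoted from \cite{RS}, Lemma~3.10, with no proof block following the statement. So there is nothing to compare against beyond noting that your write-up is precisely the argument one finds in Richardson--Springer (or in any treatment of the nil-Hecke/$0$-Hecke monoid): Tits--Matsumoto gives well-definedness of $m(w)$, the exchange condition handles the length-decreasing case of the multiplication rule, and induction on word length yields surjectivity of $w\mapsto m(w)$. Your remarks about where the idempotent relation is and is not used are accurate and worth keeping.
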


\begin{definition} (\cite{RS}, 4.7)
There is an action of the monoid $M(W)$ on $V$:
if $\cO_{v'}$ is the unique dense orbit in $K v P_\alpha$, then 
$m(s_\alpha) v = v'$.  Thus:
\begin{eqnarray*}
\text{If } \cO_v \preceq^K_\alpha \cO_{v'} \text{ then } \quad m(s_\alpha) v 
&=& v'. \\
\text{Otherwise, } \quad m(s_\alpha) v &=& v.
\end{eqnarray*}
\end{definition}

The monoidal action should be thought of in the following way.  When 
considering Weyl group actions, $s \in S$ is self-inverse, so acting twice 
by $s$ should return the original element.  The action of $s$ can both raise 
and lower dimensions.  In contrast, the monoidal action of $s \in S$ on 
$v \in V$ only changes $v$ if a cross action or Cayley transform corresponding 
to $s$ raises the dimension.  Thus repeated monoidal actions of $s$ are the 
same as acting once.  This agrees with $m(s)^2 = m(s)$.  Considering a string 
of simple monoidal actions, we may always remove the simple elements which do 
not raise dimension.  As for the Weyl group action on $\BGB$ and on $\PGB$, 
any element of $V$ can be obtained by $M(W)$ acting on the closed orbits in 
$V$.

\begin{notation}
Let $V_0$ be the set of closed orbits in $V$.
\end{notation}

\begin{definition} (\cite{RS}, 4.1)
The {\em length} of an element of $V$ is defined as follows:
\begin{enumerate}
\item If $v \in V_0$,  then $\ell(v) = 0$.
\item If $v = m(s) u$ where $v \neq u$, then $\ell(v) = \ell(u) + 1$.
\end{enumerate}
\end{definition}

\begin{definition} (\cite{RS}) 
A {\em sequence} in $S$ is $\bfs = (s_1, \ldots, s_k)$.  The {\em length} of 
$\bfs$ is $k$ and $m(\bfs ) = m(s_k) \cdots m(s_1)$.
\end{definition}

\begin{definition} (\cite{RS}, 5.2)
Given $u, v \in V$, write $u \xrightarrow{\alpha} v$ or $u \xrightarrow
{s_\alpha} v$ if there exists $x \in V$
and a sequence $\bfs \in S$ such that
\begin{enumerate}
\item $u = m(\bfs) x$ and $\ell(u) = \ell(x) + \ell(\bfs )$;
\item $v = m(\bfs)m(s_\alpha) x$ and $\ell(v) = \ell(x) + \ell(\bfs ) + 1$.
\end{enumerate}
The relation defined by
$$u \leq v \text{ if there exists a sequence } u = v_0 \xrightarrow{\alpha_1}
 v_1 \cdots \xrightarrow{\alpha_k} v_k = v$$
is the {\em standard order} on $V$.
\end{definition}

Richardson and Springer show in \cite{RS} that Bruhat order on $\KGB$ 
and standard order are the same.

The inverse of a cross action is single valued.  The inverse of a type II 
Cayley transform is single valued while the inverse of a type I Cayley 
transform is double valued.  We wish to understand how elements of $\KGB$ 
may be identified using sequences in $S$.

\begin{proposition}
Given a sequence in $V$
$$v_0 \xrightarrow{s_1} v_1 \xrightarrow{s_2} v_2 \cdots
 \xrightarrow{s_k} v_k$$
 where $\alpha_k$ is noncompact type I relative to $v_{k-1}$, there is a 
 sequence
$$u_0 \xrightarrow{s_1} u_1 \xrightarrow{s_2} u_2 \cdots \xrightarrow {s_{k-1}}
u_{k-1} = s_{\alpha_k} \times v_{k-1} \xrightarrow{s_k} v_k = 
c^{\alpha_k}(v_{k-1})$$
with each $\alpha_j$ the same types relative to $v_{j-1}$ and to $u_{j-1}$
(eg. $\alpha_k$ is noncompact type I relative to both $u_{k-1}$ and 
$v_{k-1}$).
\end{proposition}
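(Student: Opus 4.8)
The plan is to deduce the Proposition from the following slightly more general claim, proved by induction on the length $m$: \emph{if $v_0 \xrightarrow{s_1} v_1 \cdots \xrightarrow{s_m} v_m$ is a sequence in $V$ and $t \in W_G$ is a reflection with $t \times v_m \neq v_m$ and $\ell(t \times v_m) = \ell(v_m)$, then there is a sequence $u_0 \xrightarrow{s_1} u_1 \cdots \xrightarrow{s_m} u_m$ in $V$ with $u_m = t \times v_m$ and with each $\alpha_j$ of the same type relative to $v_{j-1}$ and to $u_{j-1}$.} (The two conditions on $t$ are restrictive; when $t$ is a simple reflection they say its root is noncompact imaginary type I relative to $v_m$, since the cross action by a real, compact imaginary, or noncompact imaginary type II root fixes the orbit while a complex root changes its dimension.) Granting the claim, the Proposition follows by applying it to the truncation $v_0 \to \cdots \to v_{k-1}$ with $t = s_{\alpha_k}$ (legitimate because $\alpha_k$ is noncompact imaginary type I relative to $v_{k-1}$), which produces $u_0 \to \cdots \to u_{k-1} = s_{\alpha_k} \times v_{k-1}$ with matching types, and then appending the edge $u_{k-1} \xrightarrow{s_k} v_k$. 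This last edge is valid because, by the self-reciprocity of the type I cross action --- argued as in the proof of Proposition \ref{TypeITypeIIProp}, using $s_{\alpha_k} \times u_{k-1} = v_{k-1}$ together with the description of $\pi_{\alpha_k}^{-1}\pi_{\alpha_k}(\cO_{v_{k-1}})$ as $\cO_{v_{k-1}} \cup s_{\alpha_k}\times\cO_{v_{k-1}} \cup c^{\alpha_k}(\cO_{v_{k-1}})$ --- the root $\alpha_k$ is again noncompact imaginary type I relative to $u_{k-1}$, while $c^{\alpha_k}(u_{k-1}) = c^{\alpha_k}(v_{k-1}) = v_k$, so that $m(s_k)u_{k-1} = v_k$ is a genuine one-step increase.

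For the claim itself, the base case $m = 0$ is immediate: $v_0 \in V_0$, and since the cross action by $t$ preserves the orbit dimension, $u_0 := t \times v_0$ is again a closed orbit, so the one-term sequence $(u_0)$ works. For the inductive step write $v_m = m(s_m)v_{m-1}$, where $\alpha_m$ is either complex upward or noncompact imaginary relative to $v_{m-1}$. If $\alpha_m$ is complex upward, then $v_m = s_{\alpha_m}\times v_{m-1}$; put $t' := s_{\alpha_m}t s_{\alpha_m}$, a reflection, and use the left $W_G$-action identity $(zz')\times v = z\times(z'\times v)$ to see that $t'\times v_{m-1} \neq v_{m-1}$ and (granting the type-matching below) $\ell(t'\times v_{m-1}) = \ell(v_{m-1})$. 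Apply the inductive hypothesis to $v_0 \to \cdots \to v_{m-1}$ and $t'$ to obtain $u_0 \to \cdots \to u_{m-1} = t'\times v_{m-1}$ with matching types, and set $u_m := t\times v_m$; the same identity gives $s_{\alpha_m}\times u_{m-1} = u_m$, so once we know $\alpha_m$ is complex upward relative to $u_{m-1}$ we get $m(s_m)u_{m-1} = u_m$, i.e.\ $u_{m-1} \xrightarrow{s_m} u_m$. If instead $\alpha_m$ is noncompact imaginary relative to $v_{m-1}$, then $v_m = c^{\alpha_m}(v_{m-1})$, and the argument has the same shape, except that now one must commute the cross action $t\times(-)$ past the Cayley transform $c^{\alpha_m}$; the compatibility relations between cross actions and Cayley transforms in \cite{RS}, \cite{AD}, and \cite{PSY} --- ultimately the identity $z_{\alpha_m}\psi(z_{\alpha_m})^{-1} = \dot{s}_{\alpha_m}$ and its conjugates --- produce the reflection $t'$ to feed into the inductive hypothesis, after which one checks that $\alpha_m$ has the same noncompact imaginary type (I or II) relative to $u_{m-1}$ as relative to $v_{m-1}$.

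The main obstacle is precisely this type-matching verification inside the inductive step: showing that $\alpha_m$ keeps its type --- complex upward versus noncompact imaginary, and within the latter type I versus type II --- when the base point moves from $v_{m-1}$ to $u_{m-1} = t'\times v_{m-1}$, and correspondingly that the shift reflection $t'$ stays dimension-neutral so that the inductive hypothesis applies. The type of a simple root $\alpha$ relative to $v$ is determined by the position of $w\theta\alpha$ relative to $\pm\alpha$, where $w = \dot{v}^{-1}\theta(\dot{v})T$ (Definition \ref{KGBRootTypes}, Theorem \ref{KGBBruhat}), and a cross action transforms this invariant by $w \mapsto t' w \theta(t')$; so the heart of the matter is to follow this twisted conjugation along the chain of reflections $s_{\alpha_k}, t', t'', \ldots$ thrown off by the recursion while simultaneously tracking the finer data $c_\alpha$ (compact versus noncompact) and $m_\alpha = \dot{s}_\alpha^2$ (type I versus type II, via Proposition \ref{TypeITypeIIProp}), and confirming that none of the defining (in)equalities degenerate at any stage. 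The noncompact imaginary case is the more delicate of the two, since there the passage of $t\times(-)$ through $c^{\alpha_m}$ is not a plain twisted conjugation and must be unwound using the explicit structure of the rank-one subgroup $G_{\alpha_m}$ and the element $z_{\alpha_m}$ of Definition \ref{SpringerCayley}.
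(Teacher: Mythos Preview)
Your approach has a genuine gap, and it stems from missing the key simplifying observation that the paper exploits. You set up a recursion that conjugates a reflection $t$ through the sequence $v_0 \to \cdots \to v_m$, producing at each stage a new reflection $t' = s_{\alpha_m} t s_{\alpha_m}$ (in the complex case) or some analogous element (in the Cayley case), and you correctly identify the ``main obstacle'' as verifying that the root types survive each of these twisted conjugations. But you never actually carry out that verification, and your framework makes it hard: the reflections $t', t'', \ldots$ are non-simple, the dimension-neutrality hypothesis $\ell(t' \times v_{m-1}) = \ell(v_{m-1})$ has no obvious a priori reason to hold, and in the noncompact imaginary case the commutation of $t \times (-)$ past $c^{\alpha_m}$ is not a conjugation at all --- the references you gesture at do not supply an identity of the required shape for general $t$.

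The paper bypasses all of this with a single computation. Write $w_j = \dot{v}_j^{-1}\theta(\dot{v}_j)T$. Because $\alpha_k$ is imaginary relative to $v_{k-1}$, one has $w_{k-1}\theta\alpha_k = \alpha_k$, and therefore the twisted involution attached to $u_{k-1} := s_{\alpha_k} \times v_{k-1}$ is
\[
s_{\alpha_k}\, w_{k-1}\, s_{\theta\alpha_k} \;=\; s_{\alpha_k}\, s_{w_{k-1}\theta\alpha_k}\, w_{k-1} \;=\; s_{\alpha_k}^2\, w_{k-1} \;=\; w_{k-1}.
\]
So $u_{k-1}$ and $v_{k-1}$ carry the \emph{same} twisted involution, and by Definition~\ref{KGBRootTypes} every simple root $\beta$ is real, imaginary, or complex relative to $u_{k-1}$ exactly when it is so relative to $v_{k-1}$. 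A separate short computation (conjugating the compactness condition $d(\theta\,\mathrm{int}(v))X_\beta = d(\mathrm{int}(v))X_\beta$ through $\dot{s}_{\alpha_k}$) shows that compact versus noncompact is preserved as well, and the tables preceding Theorem~\ref{KGBCrossCayley} handle type I versus type II. From here the downward construction of the $u_j$ proceeds by inverting cross actions and Cayley transforms in parallel for the $u$-chain and the $v$-chain; since the twisted involutions stay matched at every step (both chains apply the same transformation $w \mapsto s_{\alpha_j} w s_{\theta\alpha_j}$ or $w \mapsto s_{\alpha_j} w$), the type-matching propagates automatically, and the only ambiguity --- the double-valued inverse Cayley in the type I case --- is absorbed by choosing either preimage.

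In short: rather than tracking an ever-more-complicated reflection $t$ through the chain, track the single invariant $w$. The point is not that your recursion is wrong in principle, but that the verification you defer \emph{is} the proof, and the paper's observation that $w$ is fixed under $s_{\alpha_k} \times (-)$ is exactly what discharges it.
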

\begin{proof}
Begin by letting $w_j = v_j^{-1} \theta( v_j)T \in W_G$.  Since $\alpha_k$ is 
noncompact imaginary relative to $v_{k-1}$, therefore $w_{k-1} \theta \alpha_k
= \alpha_k$.  Therefore $(v_{k-1}{s}_{\alpha_k}^{-1})^{-1} \theta( v_{k-1}
{s}_{\alpha_k}^{-1}) T = s_{\alpha_k} w_{k-1} \theta( s_{\alpha_k}) = 
s_{\alpha_k} s_{w_{k-1}  \theta \alpha_k} w_{k-1} = s_{\alpha_k}^2 w_{k-1}
 = w_{k-1}$.  Recall Definition  \ref{KGBRootTypes}.
Thus if $\beta$ is real, imaginary, or complex relative to 
$v$ and $\alpha$ is non-compact relative to $v$, then $\beta$ is real, 
imaginary, or complex, respectively, relative to $s_\alpha \times v$.  
Our tables before Theorem \ref{KGBCrossCayley} show that if $\beta$
is type I relative to $v$, then it is type I relative to $s_\alpha 
\times v$.  If $\beta$ is compact relative to $v$ (that is,
$d( \theta int(v)) X_\beta = d \, int(v) X_\beta$ for $X_\beta \in \g_\beta$) 
and $\alpha$ is non-compact type I or type II relative to $v$, 
then we see that $\beta$ is compact relative to $s_\alpha \times v$:
\begin{eqnarray*}
d( \theta int( v \dot{s}_\alpha^{-1} ) ) X_\beta &=&
d( \theta( int( v \dot{s}_\alpha^{-1} v^{-1}) int(v)) X_\beta \\
&=& d( int( \theta( \dot{s}_{\alpha_v} ) \theta int(v)) X_\beta \\
&=& d( int( \dot{s}_{\alpha_v}) int(v) ) X_\beta \\
&=& d( int( v\dot{s}_\alpha ) ) X_\beta \\
&=& d( int( v \dot{s}_\alpha^{-1} )) X_\beta.
\end{eqnarray*}
We see that whatever type some simple 
root $\beta$ is relative to $v_{k-1}$, it is precisely the same type relative 
to $s_{\alpha_k} \times v_{k-1}$.  Recall that $c^{\alpha_k} ( s_{\alpha_k} 
\times v_{k-1} ) = v_k$ as well.  Since only noncompact type I roots cause 
ambiguity in taking inverses of cross actions and Cayley transforms, therefore 
by induction, the proposition holds.
\end{proof}

Thus using our simple combinatorial descriptions of simple relations in 
Bruhat order, it is easy to understand:
\begin{remark}
There are two general 
methods of specifying any element $u \in V$ up to braid relations:
\begin{enumerate}
\item   There is $u_0 \in V_0$ and a sequence
$$u_0 \xrightarrow{s_1} u_1 \xrightarrow{s_2} u_2 \cdots
 \xrightarrow{u_k} u_k = u.$$
This specifies $u$ unambiguously.
\item Let the unique open dense orbit in $\KGB$ be $K v B$.  There is a 
sequence
$$ v = u_\ell \xleftarrow{s_\ell} u_{\ell-1} \xleftarrow{s_{\ell-1}} u_{\ell-2}
\cdots \xleftarrow{s_{k+1}} u_k = u.$$
A sequence moving downwards from the open orbit does not necessarily uniquely 
identify 
the orbit $u$ since the inverse Cayley transform is double valued for type I 
roots.  To uniquely identify $u$, specify a choice for each type I inverse 
Cayley transform.
\end{enumerate}
\end{remark}

\begin{corollary} \label{KGBWUnique}
If $u \in V$ and $w_1, w_2 \in W$ are minimal length satisfying $m(w_1)u = 
m(w_2)u$, then $w_1 = w_2$.
\end{corollary}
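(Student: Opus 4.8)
The plan is to induct on length. The statement is the $\KGB$-counterpart of the fact that in $\BGB$ the shortest $w$ with $m(w)(B\dot uB)=B\dot vB$ is unique simply because $W$ is a group: there $m(w)$ realizes the Demazure product, so the shortest such $w$ is the single Weyl group element $u^{-1}v$ (of length $\ell(v)-\ell(u)$, which is forced once $v$ is in the image at all). No such group-theoretic shortcut is available for $\KGB$, so one must argue with monoidal chains directly.

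Write $v$ for the common value $m(w_1)u=m(w_2)u$. First I would check that every $w$ of minimal length with $m(w)u=v$ has length exactly $d:=\ell(v)-\ell(u)$, and that in any reduced word $w=s_{i_1}\cdots s_{i_k}$ each of the $k$ monoidal moves producing $m(w)u$ from $\cO_u$ strictly raises dimension. A move either fixes its orbit or raises its dimension by one, and the latter is exactly when it raises $\ell(\cdot)$ by one; so $\ell(v)\le\ell(u)+k$. If some move fixed its orbit we could delete the corresponding letter and, using $m(s)^2=m(s)$ and the monoid rewriting rule $m(w')m(s)\in\{m(w's),m(w')\}$, rewrite $m(w)$ as $m(w'')$ with $\ell(w'')<k$, contradicting minimality. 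In particular $\ell(w_1)=\ell(w_2)=d$.

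Now induct on $d$; for $d=0$, $v=u$ and $w_1=w_2=e$. For $d\ge 1$, pick a reduced expression of $w_1$ ending in a simple reflection $s_\alpha$; then $u':=m(s_\alpha)u$ satisfies $\cO_u\prec^K_\alpha\cO_{u'}$ and $\ell(u')=\ell(u)+1$, and $w_1=w_1's_\alpha$ with $m(w_1')u'=v$, where $w_1'$ (of length $d-1=\ell(v)-\ell(u')$) is of minimal length among $w'$ with $m(w')u'=v$ — a shorter one would, after appending $s_\alpha$, contradict minimality of $w_1$. It then suffices to show $s_\alpha$ is also a right descent of $w_2$: writing $w_2=w_2''s_\alpha$ we get $m(w_2'')u'=v$ with $\ell(w_2'')=d-1$, so $w_2''$ is minimal for the pair $(u',v)$, the inductive hypothesis gives $w_2''=w_1'$, and hence $w_2=w_1$.

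The remaining point — the heart of the argument — is that the right-descent set of a minimal $w$ with $m(w)u=v$ is an invariant of $(u,v)$: it equals the set of simple $s_\beta$ with $m(s_\beta)u\neq u$ for which $v$ is reachable from $m(s_\beta)u$ by a chain of $d-1$ further monoidal moves. One containment is immediate from the second paragraph. For the other, given such an $s_\beta$ and a minimal $w$ whose last letter is $s_\gamma\neq s_\beta$, one has two efficient monoidal chains $u\to v$, beginning with $s_\beta$ and with $s_\gamma$ respectively; using the explicit description of when a move raises or lowers dimension (Theorems~\ref{KGBBruhat} and \ref{KGBCrossCayley}, together with the rank-one analysis of Section~\ref{KGBCC}), I would reroute these chains so that their disagreement is confined to the rank-two parabolic generated by $s_\beta$ and $s_\gamma$, where property $Z$ (Definition~\ref{PropertyZ}) forces the two efficient chains to be linked by braid moves alone; since braid moves do not change the underlying Weyl group element, $s_\beta$ is a right descent of $w$ after all. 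Making this rerouting precise — confining the discrepancy between two efficient chains to a single rank-two parabolic and resolving it there by a braid move — is the step I expect to require the most care, although the combinatorial formulas for the simple relations are exactly what make the bookkeeping tractable.
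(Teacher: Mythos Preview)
Your setup and inductive reduction are sound: minimality does force $\ell(w_1)=\ell(w_2)=d:=\ell(v)-\ell(u)$ with every monoidal step strictly raising dimension, and the induction correctly reduces everything to the claim that the right-descent set of a minimal $w$ with $m(w)u=v$ depends only on the pair $(u,v)$. But the argument you sketch for that claim is a genuine gap, not merely missing detail. Invoking property~$Z$ here is misplaced: property~$Z$ (Definition~\ref{PropertyZ}) governs Bruhat-order comparisons among orbits, not braid equivalence of monoidal words, and it does not show that two efficient length-$d$ chains from $u$ to $v$ beginning with distinct simple reflections $s_\beta,\,s_\gamma$ must be braid-related. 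The ``rerouting to a rank-two parabolic'' is asserted rather than carried out; making it precise would amount to proving a Matsumoto-type word property for efficient $M(W)$-chains on $V$, which is essentially the content of the corollary you are trying to establish. The rank-one tables of Section~\ref{KGBSection} that you cite do not supply the needed rank-two analysis.

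The paper, for its part, gives no explicit proof: the statement is a bare Corollary placed immediately after the Proposition on type~I noncompact-imaginary chains and the Remark that upward chains specify elements of $V$ ``up to braid relations.'' The intended deduction appears to be that the only ambiguity in an efficient chain comes from the two-valued inverse Cayley transform in the type~I case, and the Proposition shows that swapping the two preimages leaves the simple-reflection sequence---hence $w$---unchanged. That route is different from yours: it isolates a single concrete source of non-uniqueness and disposes of it via the preceding Proposition, rather than attempting a general rank-two rerouting governed by property~$Z$.
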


\section{Bruhat Order on $\KGP$} \label{KGP}
\subsection{Closure Order and the Order Induced From Bruhat Order on $\KGB$}
Recall that Bruhat order on $\KGP$ is defined to be closure order.
\begin{proposition}\label{KGPClosure}
Closure order on $\KGP$ is the same as the partial order induced from Bruhat 
order on $\KGB$.  That is, writing $K u P \leq K v P$ if there are orbit 
representatives $u_0$ and $v_0$, respectively, such that $K u_0 B \leq K v_0 B$,
$$KuP \leq KvP \iff KuP \preceq^K KvP.$$
\end{proposition}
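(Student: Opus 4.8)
The plan is to deduce both implications from the behaviour of the canonical projection $\pi\colon G/B \to G/P$ under taking closures, which is precisely the content of Lemma~\ref{TopologyLemma} applied with $P_1 = B$ and $P_2 = P$, together with one non-formal input: inside each preimage $\pi^{-1}(KwP)$ there is a \emph{unique dense} $K$-orbit, which I will call $\cO^{\mathrm{max}}_w$. First I would record the dictionary: a $K$-orbit $\cO\subset G/B$ is an ``orbit representative'' of $KwP$ exactly when $\pi(\cO)=KwP$, equivalently $\cO\subset\pi^{-1}(KwP)$; since $\KGB$ is finite, $\pi^{-1}(KwP)$ is a finite union of $K$-orbits, and I note $\overline{\cO^{\mathrm{max}}_w} = \overline{\pi^{-1}(KwP)}$.

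\textbf{The direction $KuP\preceq^K KvP \Rightarrow KuP\le KvP$.} Here I would start from $KuP\subset\overline{KvP}$, pull back, and apply Lemma~\ref{TopologyLemma}(2):
$$\pi^{-1}(KuP)\;\subset\;\pi^{-1}\bigl(\overline{KvP}\bigr)\;=\;\overline{\pi^{-1}(KvP)}\;=\;\overline{\cO^{\mathrm{max}}_v}.$$
Taking $Kv_0B := \cO^{\mathrm{max}}_v$ and $Ku_0B$ to be \emph{any} orbit representative of $KuP$ (for instance $\cO^{\mathrm{max}}_u$), I then get $Ku_0B\subset\overline{Kv_0B}$, i.e.\ $Ku_0B\preceq^K Kv_0B$ in $\KGB$, which is by definition $KuP\le KvP$. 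If one prefers not to name the dense orbit, one can instead observe that an orbit representative $Ku_0B$ is irreducible and lies in the finite union of closed sets $\overline{\pi^{-1}(KvP)} = \bigcup_i\overline{Kv_iB}$, hence lies in a single $\overline{Kv_iB}$, which already supplies a valid pair of representatives.

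\textbf{The direction $KuP\le KvP \Rightarrow KuP\preceq^K KvP$.} Now I would start from representatives with $Ku_0B\subset\overline{Kv_0B}$, $\pi(Ku_0B)=KuP$, $\pi(Kv_0B)=KvP$, push forward, and use Lemma~\ref{TopologyLemma}(1) (only the elementary inclusion $\pi(\overline X)\subset\overline{\pi(X)}$ is needed):
$$KuP\;=\;\pi(Ku_0B)\;\subset\;\pi\bigl(\overline{Kv_0B}\bigr)\;=\;\overline{\pi(Kv_0B)}\;=\;\overline{KvP},$$
so $KuP\preceq^K KvP$, which closes the equivalence.

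\textbf{Main obstacle and a closing remark.} The step I expect to carry the real weight is the existence (and uniqueness) of the dense $K$-orbit $\cO^{\mathrm{max}}_w$ in $\pi^{-1}(KwP)$: this comes down to $\pi^{-1}(KwP)$ being irreducible — it is the preimage of the irreducible orbit $KwP$ under the fibration $\pi$, whose fibre $P/B$ is the (irreducible) flag variety of $L$ — so that, being a finite union of $K$-orbits, exactly one of them is dense; this is the same Knop-type ingredient that appears for the $\bbP^1$-fibrations in Section~\ref{Reduction}. Everything else is formal, using only that $\pi$ is continuous and proper. I would finish by pointing out that this argument is entirely topological: it never invokes a Bruhat-type decomposition of $\KGP$, which is exactly why — in contrast with the earlier proof for $\PGB$, which relied on the Bruhat decomposition — the very same argument applies with $K$ replaced by $P$ (or $B$) and thereby reproves the $\PGB$ statement as well.
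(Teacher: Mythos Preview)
Your proof is correct and follows essentially the same approach as the paper: both directions use Lemma~\ref{TopologyLemma} (push forward for $\Rightarrow$, pull back for $\Leftarrow$) together with the existence of a unique dense $K$-orbit in $\pi^{-1}(KvP)$. The only minor difference is that the paper defers the existence of this dense orbit to a forward reference (Corollary~\ref{KGPUniqueMaximal}, via mixed/spherical subgroups), whereas you supply a self-contained irreducibility argument for $\pi^{-1}(KwP)$; either justification works.
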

\begin{proof}
$\Leftarrow$: Since 
$KuB \subset \pi_I^{-1} \pi_I( KuB ) \subset \pi_I^{-1}\pi_I( \overline{KvB} )
\subset \overline{K v_0 B }$
where $K v_0 B$ is the unique dense orbit in $\pi_I^{-1}(KvP)$ (which exists, 
as we 
will see in Corollary \ref{KGPUniqueMaximal}), we see that $K u B \subset 
\overline{Kv_0 B}$. \\
$\Rightarrow$:  We observe that
$
K u_0 B \subset \overline{K v_0 B} 
\Rightarrow KuP = \pi_I( Ku_0 B ) \subset \overline {\pi_I( K v_0 B ) } = 
\overline{ KvP}.$
\end{proof}

\subsection{Understanding $KvP$:  $I$-Equivalence}
We wish to find a simple parametrization of $\KGP$.  As we will see, the key 
to parametrizing 
$\KGP$ is understanding the Bruhat order of $\KGB$ restricted to the $B$-orbits
in a $P$-orbit.

Since $P \supset B$, therefore each $P$-orbit $KvP$ can be expressed as a 
union of $B$-orbits $KuB$.  This is an example of an $I$-equivalence class, 
defined in the preprint \cite{PSY} on generalized Harish-Chandra 
modules:
\begin{definition}
Recall the map $\pi_I:  G / B \to G / P$, the natural projection from the flag 
variety to the partial flag variety of parabolic subgroups of type $I$.
Two orbits $\cO, \cO'$ in $\KGB$ are {\em $I$-equivalent} (write $\cO \sim_I 
\cO'$) if they project to the same $K$-orbit on $G / P$; i.e. $\pi_I( \cO )
= \pi_I( \cO' )$.  The $I$-equivalence class of $\cO$ is
$$[\cO]_{\sim_I} = K \backslash \pi_I^{-1} ( \pi_I(\cO)).$$
\end{definition}

In \cite{PSY}, each $I$-equivalence class $K v_0 P = \cup_{\cO \sim_I 
\cO_{v_0}} \cO$ is shown to be in bijection with some double coset space 
$\Mv \backslash L / B \cap L$.  The idea of considering such a bijection is 
due to Lusztig-Vogan, according to \cite{Ma}, p. 313.  
Note that these are $\Mv$-orbits on $L / B \cap L$, the flag variety of $L$.  
The bijection permits a 
generalization of the following commonly used technique:  when computing 
Kazhdan-Lusztig polynomials, often, the first step is to first make use 
of polynomials arising from smaller root subsystems.  For example, to compute 
type $A_3$ polynomials, begin by finding copies of $A_2$ within $A_3$.
Furthermore, the subgroup $\Mv$ is a spherical subgroup of 
$L$ and thus there is a unique open dense orbit in 
$\Mv \backslash L / B \cap L$.  The bijection of $I$-equivalence classes with 
double coset spaces respects Bruhat order. Therefore, each $I$-equivalence 
class has a unique maximal element since $\Mv \backslash L / B \cap L$ has a 
unique maximal element.  These maximal elements are easy to specify 
combinatorially, giving us a succinct parametrization of $\KGP$.  We now 
proceed to provide 
more details.  Because we study orbits of different subgroups on different 
flag varieties, we use superscripts to differentiate the different orbit types
by subgroup.

\begin{definition} (\cite{PSY})
Consider a parabolic subgroup and Levi decomposition $\mathtt{P=LN}$ where 
$\mathtt{L}$ carries an involution $\mathtt{\Theta}$ (which may not be defined 
on $G$).  A mixed subgroup of 
$G$ is a subgroup of the form $\mathtt{M = L^\Theta N}$.
\end{definition}
\begin{remark}
\item Mixed subgroups generalize $K$, $B$, and $P$ as follows.  Select 
$\mathtt{P}=G$ and $\mathtt{\Theta} = \theta$, then $\mathtt{M}=K$.
Select $\mathtt{P}=B$ and $\mathtt{\Theta} = \mathrm{Id}$, then $\mathtt{M}=B$.
Select $\mathtt{P}=P$ and $\mathtt{\Theta} = \mathrm{Id}$, then $\mathtt{M}=P$.
\end{remark}

\begin{proposition}\label{MGBParam} (\cite{PSY})
There is a bijection $\mathtt{M} \backslash G / B \leftrightarrow \mathtt{
L^\Theta \backslash L / B \cap L}
\times W_{\mathtt{L}} \backslash W_G$.  More specifically, 
$$\mathtt{M} \backslash G / B \leftrightarrow \mathtt{L^\Theta \backslash L /
B \cap L}  \times_{W_{\mathtt{L}}} W_G$$
where the fibre product is with respect to the cross action of $W_{\mathtt{L}}$
on $\mathtt{L^\Theta \backslash L / B \cap L}$.
\end{proposition}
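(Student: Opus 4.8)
The plan is to fibre $\mathtt{M}\backslash G/B$ over $W_{\mathtt{L}}\backslash W_G$, identify each fibre with $\mathtt{L^\Theta}\backslash\mathtt{L}/(B\cap\mathtt{L})$, and then reassemble the fibres, the ambiguity in the choice of a coset representative being absorbed by the cross action. After conjugating $\mathtt{P}$, $\mathtt{\Theta}$ and $\mathtt{M}$ if necessary, I would assume $B\subseteq\mathtt{P}$, so that $T\subseteq\mathtt{L}$, $W_{\mathtt{L}}=N_{\mathtt{L}}(T)/T$ is a standard parabolic subgroup of $W_G$, and the roots $\Delta_{\mathtt{L}}$ of $\mathtt{L}$ with respect to $T$ have $\Delta_{\mathtt{L}}^+:=\Delta_{\mathtt{L}}\cap\Delta^+$ as a positive system. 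By the parabolic Bruhat decomposition, applied to $\mathtt{P}$ as in Proposition \ref{PGBBruhatDecomposition}, $G=\coprod_{W_{\mathtt{L}}w}\mathtt{P}\dot{w}B$; since $\mathtt{M}\subseteq\mathtt{P}$, the $\mathtt{M}$-orbits refine the $\mathtt{P}$-orbits, giving a surjection $\mathtt{M}\backslash G/B\to W_{\mathtt{L}}\backslash W_G$ with fibre $\mathtt{M}\backslash\mathtt{P}\dot{w}B/B$ over $W_{\mathtt{L}}w$.

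For the fibre, fix the minimal length representative $w$ of $W_{\mathtt{L}}w$. The orbit map $p\mapsto p\dot{w}B$ identifies $\mathtt{P}\dot{w}B/B$ with $\mathtt{P}/(\mathtt{P}\cap B_{\dot{w}})$, hence $\mathtt{M}\backslash\mathtt{P}\dot{w}B/B\cong\mathtt{M}\backslash\mathtt{P}/(\mathtt{P}\cap B_{\dot{w}})$. Let $q:\mathtt{P}\to\mathtt{L}$ be the quotient by the unipotent radical $\mathtt{N}$. Since $\mathtt{N}\subseteq\mathtt{M}$, $q(\mathtt{M})=\mathtt{L^\Theta}$, and $\mathtt{P}$ normalizes $\mathtt{N}$, moving factors of $\mathtt{N}$ to one side (absorbing them into $\mathtt{M}$) shows $q$ induces a bijection $\mathtt{M}\backslash\mathtt{P}/(\mathtt{P}\cap B_{\dot{w}})\xrightarrow{\ \sim\ }\mathtt{L^\Theta}\backslash\mathtt{L}/q(\mathtt{P}\cap B_{\dot{w}})$. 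On Lie algebras $\mfp\cap\mfb_{\dot{w}}=\mft\oplus\bigoplus_{\alpha\in(\Delta^+\cup\Delta_{\mathtt{L}})\cap w\Delta^+}\g_\alpha$, and $\mathtt{P}\cap B_{\dot{w}}$ is connected (an intersection of two parabolics containing $T$), so $q(\mathtt{P}\cap B_{\dot{w}})$ is the Borel subgroup of $\mathtt{L}$ with root system $\Delta_{\mathtt{L}}\cap w\Delta^+$; minimality of $w$ forces $w^{-1}\Delta_{\mathtt{L}}^+\subseteq\Delta^+$, hence $\Delta_{\mathtt{L}}\cap w\Delta^+=\Delta_{\mathtt{L}}^+$ and $q(\mathtt{P}\cap B_{\dot{w}})=B\cap\mathtt{L}$. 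Unwinding, one gets a bijection $F_w:\mathtt{L^\Theta}\backslash\mathtt{L}/(B\cap\mathtt{L})\xrightarrow{\ \sim\ }\mathtt{M}\backslash\mathtt{P}\dot{w}B/B$, $\mathtt{L^\Theta}\ell(B\cap\mathtt{L})\mapsto\mathtt{M}\ell\dot{w}B$, well defined because minimality of $w$ also gives $B\cap\mathtt{L}\subseteq\mathtt{P}\cap B_{\dot{w}}$, so $B\cap\mathtt{L}$ fixes the flag $\dot{w}B$. Running over all cosets with minimal length representatives fixed, the $F_w$ assemble into a bijection $\mathtt{L^\Theta}\backslash\mathtt{L}/(B\cap\mathtt{L})\times W_{\mathtt{L}}\backslash W_G\xrightarrow{\ \sim\ }\mathtt{M}\backslash G/B$.

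To obtain the representative-free form, I would track how $F_w$ changes with the representative: for $w'=vw$ with $v\in W_{\mathtt{L}}$ the same computation gives $q(\mathtt{P}\cap B_{\dot{w'}})=\dot{v}(B\cap\mathtt{L})\dot{v}^{-1}$, so $F_{w'}$ and $F_w$ differ by conjugation by $\dot{v}$ on the $\mathtt{L}$-side, which is precisely the cross action of $v$ on $\mathtt{L^\Theta}\backslash\mathtt{L}/(B\cap\mathtt{L})$. Consequently the family $(F_w)$, one per coset with $w$ minimal length, is the gluing datum of a bijection with the balanced product $\mathtt{L^\Theta}\backslash\mathtt{L}/(B\cap\mathtt{L})\times_{W_{\mathtt{L}}}W_G$, where $W_{\mathtt{L}}$ acts by the cross action on the first factor and by left translation on $W_G$; under it the class of $(\mathtt{L^\Theta}\ell(B\cap\mathtt{L}),w)$ with $w$ minimal length corresponds to $\mathtt{M}\ell\dot{w}B$, and passing between two representatives $w'=vw$ introduces exactly the compensating factor $\dot{v}$, since $\dot{v}^{-1}\dot{w'}$ and $\dot{w}$ both represent $w$ and so $\mathtt{M}(\ell\dot{v}^{-1})\dot{w'}B=\mathtt{M}\ell\dot{w}B$.

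The step I expect to be the main obstacle is this last one: verifying that the change of coset representative is implemented by the cross action with the correct side and inverse conventions, and that the cross action used here is a genuine $W_{\mathtt{L}}$-action on $\mathtt{L^\Theta}\backslash\mathtt{L}/(B\cap\mathtt{L})$ rather than a family of partially defined moves, so that the balanced product is meaningful — the rest being routine double-coset bookkeeping, together with the standard facts that $\mathtt{P}\cap B_{\dot w}$ is connected and that the intersection of a positive system with a subsystem is a positive system. As a consistency check, the three cases in the Remark preceding the proposition recover the Bruhat decomposition ($\mathtt{P}=B$, $\mathtt{\Theta}=\mathrm{Id}$: both outer factors degenerate), the parabolic Bruhat decomposition ($\mathtt{P}=P$, $\mathtt{\Theta}=\mathrm{Id}$: the first factor degenerates), and the tautology $K\backslash G/B=K\backslash G/B$ ($\mathtt{P}=G$, $\mathtt{\Theta}=\theta$: $W_{\mathtt{L}}=W_G$ and the balancing is total).
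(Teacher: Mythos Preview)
The paper does not supply a proof of this proposition; it is quoted from \cite{PSY} and followed only by a one-line remark, so there is nothing in the paper to compare your argument against.

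That said, your approach is sound and is the natural one. Fibring $\mathtt{M}\backslash G/B$ over $W_{\mathtt{L}}\backslash W_G$ via the parabolic Bruhat decomposition, identifying each fibre $\mathtt{M}\backslash \mathtt{P}\dot w B/B$ with $\mathtt{L^\Theta}\backslash \mathtt{L}/q(\mathtt{P}\cap B_{\dot w})$ by collapsing $\mathtt{N}$, and computing $q(\mathtt{P}\cap B_{\dot w})=B\cap\mathtt{L}$ for $w$ of minimal length are all correct; connectedness of $\mathtt{P}\cap B_{\dot w}$ is the standard fact that an intersection of parabolic subgroups of a connected reductive group is connected. Your worry that the cross action might only be ``a family of partially defined moves'' is unfounded: the formula $v\times\bigl(\mathtt{L^\Theta}\ell(B\cap\mathtt{L})\bigr)=\mathtt{L^\Theta}\ell\,\dot v^{-1}(B\cap\mathtt{L})$ is well defined because any two lifts of $v$ to $N_{\mathtt{L}}(T)$ differ by an element of $T\subseteq B\cap\mathtt{L}$, and it is visibly a group action. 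What remains is purely a matter of matching conventions for the balanced product, not a mathematical obstacle.
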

\begin{remark}
Our double cosets are in bijection with a smaller $\KGB$ cross a Weyl group 
quotient.
\end{remark}

\begin{corollary}
Mixed subgroups are spherical subgroups.
\end{corollary}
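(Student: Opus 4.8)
The plan is to obtain sphericity of $\mathtt{M}$ as a formal consequence of the finiteness of the orbit set $\mathtt{M}\backslash G/B$, which in turn is immediate from the structural bijection of Proposition \ref{MGBParam}. First I would record that $\mathtt{M}=\mathtt{L^\Theta N}$ is indeed a closed subgroup of $G$: it contains $\mathtt{N}$ and equals the preimage of the closed subgroup $\mathtt{L^\Theta}\subset\mathtt{L}$ under the projection $\mathtt{P}=\mathtt{LN}\to\mathtt{L}$, hence is closed in the parabolic $\mathtt{P}$ and therefore in $G$. Next I would argue that $\mathtt{M}\backslash G/B$ is finite. By Proposition \ref{MGBParam} it is in bijection with $\mathtt{L^\Theta\backslash L/B\cap L}\times W_{\mathtt{L}}\backslash W_G$. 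The factor $W_{\mathtt{L}}\backslash W_G$ is finite because $W_G$ is finite. The factor $\mathtt{L^\Theta\backslash L/B\cap L}$ is finite because $(\mathtt{L},\mathtt{\Theta})$ is a symmetric pair of a reductive group and $B\cap\mathtt{L}$ is a Borel subgroup of $\mathtt{L}$, so $\mathtt{L^\Theta}$ is a spherical subgroup of $\mathtt{L}$, equivalently has finitely many orbits on the flag variety of $\mathtt{L}$; this is classical, and it is in fact a special case of the finiteness already established earlier in this paper once one invokes that symmetric subgroups are spherical, or one may simply cite \cite{RS}. It is worth noting that it is irrelevant here whether $\mathtt{\Theta}$ extends to an involution of $G$: this whole computation takes place inside the reductive group $\mathtt{L}$ together with the finite group $W_G$.

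Having established that $\mathtt{M}$ has only finitely many orbits on $G/B$, I would then promote this to the existence of an open orbit. Since $G/B$ is irreducible and is covered by the finitely many closed irreducible subsets $\overline{\mathtt{M}x}$, one of these orbit closures must be all of $G/B$, and the corresponding orbit is therefore dense. As any orbit of an algebraic group action is locally closed --- hence open in its own closure --- this dense orbit is open in $G/B$. By the definition of spherical subgroup this is precisely the assertion that $\mathtt{M}$ is spherical, completing the proof.

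I do not expect a genuine obstacle here; the argument is essentially a two-line reduction. The only input not internal to the paper is the finiteness of Borel orbits on a symmetric variety, equivalently that symmetric pairs are spherical, and even that is of the same nature as the orbit spaces treated in Section \ref{KGBSection}. The one point I would state carefully, rather than a difficulty, is the passage from ``finitely many orbits'' to ``an open orbit,'' which is where irreducibility of the flag variety is used.
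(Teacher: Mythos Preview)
Your proposal is correct and follows exactly the route the paper intends: the corollary is placed immediately after Proposition~\ref{MGBParam} with no proof given, so the intended argument is precisely that the bijection there, together with finiteness of $\mathtt{L^\Theta}\backslash\mathtt{L}/(B\cap\mathtt{L})$ and of $W_{\mathtt{L}}\backslash W_G$, yields finiteness of $\mathtt{M}\backslash G/B$ and hence sphericity. Your added care about closedness of $\mathtt{M}$ and about promoting ``finitely many orbits'' to ``an open orbit'' via irreducibility of $G/B$ is appropriate detail that the paper leaves implicit.
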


\begin{definition} (\cite{PSY})
Cross actions and Cayley transforms on $\mathtt{M} \backslash G / B$ may 
be defined by multiplying orbit representatives on the right by 
$\dot{s}_\alpha^{-1}$ and by $z_\alpha^{-1}$, respectively, as before.
\end{definition}

As mentioned, an application of the theory of mixed subgroups is a bijection 
between orbits in an $I$-equivalence class and mixed subgroup orbits on the 
flag variety for a Levi subgroup. 
\begin{theorem}\label{IEquiv} (\cite{PSY})
Given an $I$-equivalence class $[ \cO_{g}^{\mathtt{M}}]_{\sim I}$ of  
$\mathtt{M}$-orbits 
on $G/B$, there exists a mixed subgroup $\Mg$ of $L$ and a bijection  
$$\Mg \backslash L / B \cap L \xrightarrow{\psi} 
\mathtt{M} \backslash g P / B = [ \cO_{g}^{\mathtt{M}}]_{\sim_I}$$
such that the following diagram commutes:
\begin{diagram}
L & \rTo & \Mg \backslash L / B \cap L \\
& \rdTo & \dTo_\psi \\
& & \mathtt{M} \backslash g P / B.
\end{diagram}
The unlabelled maps are the natural maps arising by choosing orbit 
representatives from $L$ and from $g L$.  Furthermore, 
\begin{eqnarray*}
\psi \left( s_\alpha \times \cO^{\Mg}_\ell \right) &=& s_\alpha \times 
\psi\left( \cO^{\Mg}_\ell \right) \\
\text{and} \qquad \psi\left( c^\alpha \left( \cO^{\Mg}_\ell \right)\right) 
&=& c^\alpha \left(  \psi\left( \cO^{\Mg}_\ell \right)\right).
\end{eqnarray*}

In the case where $\mathtt{M} = K$, we may set:
\begin{enumerate}
\item $g = v_0 \in \sV$ to be a representative for $[\cO^K_{v_0}]_{\sim_I}$ 
of minimal dimension
\item $\tilde{\theta} = \mathrm{int}( v_0^{-1}) \circ \theta \circ 
\mathrm{int}( v_0)$
\item $J = \lbrace \alpha \in S : \alpha_{v_0} \text{ is real or imaginary}
\rbrace \cup \lbrace \alpha \in S: \alpha_{v_0} \text{ is complex and }
\theta(\alpha_{v_0}) \in S_{v_0} \rbrace.$
\item $P_J^I = L_J N^I_J$ the $T$-stable Levi decomposition of the parabolic 
subgroup of $L$ corresponding to $J$
\item $\Mv = L_J^{\tilde{\theta}} N^I_J$.
\end{enumerate}
\end{theorem}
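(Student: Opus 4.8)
The plan is to build the bijection $\psi$ by hand via two reductions and then read off the listed properties; the only genuinely hard input is the explicit description of $\Mg$. By definition $[\cO_g^{\mathtt M}]_{\sim_I}=\mathtt M\backslash\pi_I^{-1}(gP)=\mathtt M\backslash gP/B$, so everything takes place inside the single fibre $\pi_I^{-1}(gP)=gP/B$. First I would run the Knop-type fibre correspondence of Section~\ref{Reduction}, with $\pi_I$ in place of $\pi_\alpha$: for $gB$ in the class, the $\mathtt M$-orbits on $\pi_I^{-1}(gP)$ are in bijection with the $(\mathtt M\cap gPg^{-1})$-orbits on $gP/B$ via $(\mathtt M\cap gPg^{-1})\,gpB\mapsto\mathtt M\,gpB$, the map being well defined, surjective and injective by exactly the rearrangement argument used in the proof of the Knop lemma. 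Transporting along $pB\mapsto gpB$ turns this into a bijection of $\mathtt M\backslash gP/B$ with $Q\backslash P/B$, where $Q:=(g^{-1}\mathtt M g)\cap P$, and this identification is equivariant for right multiplication on $G/B$, hence for the cross actions and Cayley transforms, which are right multiplication by $\dot{s}_\alpha^{-1}$ and $z_\alpha^{-1}$.

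Next, since $P=LN$ and $B=(B\cap L)N$ with $N$ normal in $P$, the projection $\pi_L\colon P\to L$ induces a $P$-equivariant isomorphism $P/B\xrightarrow{\sim}L/(B\cap L)$ onto the flag variety of $L$ on which $N$ acts trivially; hence the $Q$-orbits on $P/B$ coincide with the $\pi_L(Q)$-orbits on $L/(B\cap L)$. Setting $\Mg:=\pi_L\big((g^{-1}\mathtt M g)\cap P\big)$ and composing the identifications produces a bijection $\psi\colon\Mg\backslash L/(B\cap L)\to\mathtt M\backslash gP/B$ that carries the $\Mg$-orbit of $\ell(B\cap L)$ to the $\mathtt M$-orbit of $g\ell B$. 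Commutativity of the displayed square is then just the observation that both composites $L\to\mathtt M\backslash gP/B$ send $\ell$ to $\mathtt M g\ell B$; since $P/B\to L/(B\cap L)$ is a homeomorphism and $gP/B$ is closed in $G/B$, $\psi$ is moreover order-preserving for closure orders; and the right-multiplication equivariance gives $\psi(s_\alpha\times\cO)=s_\alpha\times\psi(\cO)$ and $\psi(c^\alpha\cO)=c^\alpha\psi(\cO)$ for $\alpha$ a simple root of $L$, since then $\dot{s}_\alpha$ and $z_\alpha$ are the images under $\phi_\alpha$ of the same $SL_2$ elements and $\phi_\alpha$ factors through $L$.

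For the Cayley transforms to make sense on both sides one must know that $\alpha$ is real, imaginary (compact or noncompact), or complex, and of type I or II, relative to an $\Mg$-orbit precisely when it is so relative to its $\psi$-image. I would verify this by a computation in the spirit of the proof of the Proposition on noncompact type I sequences in Section~\ref{KGBSection}: the type of $\alpha$ is read off from how the relevant involution acts on the root subgroup through $\alpha_\ell$, and the involution cutting out $\Mg$ inside $L$ and the involution cutting out $\mathtt M$ inside $G$ restrict to the same automorphism of the rank-one subgroup $G_\alpha$.

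The substantive point---and the step I expect to be the crux---is that $\Mg=\pi_L\big((g^{-1}\mathtt M g)\cap P\big)$ really is a \emph{mixed} subgroup of $L$ in the sense defined above, and that in the case $\mathtt M=K$ it is produced by the recipe (1)--(5). Here one takes $g=v_0$ of minimal dimension in the class and $\tilde\theta=\mathrm{int}(v_0^{-1})\circ\theta\circ\mathrm{int}(v_0)$; this is always an involution of $G$, is $T$-stable because $v_0^{-1}\theta(v_0)\in N(T)$, acts on $\Delta(\g,\mft)$ through $w\theta$ as in Definition~\ref{KGBRootTypes}, and satisfies $g^{-1}Kg=G^{\tilde\theta}$, so that $Q=G^{\tilde\theta}\cap P$. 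The delicate assertion is that minimality of $v_0$ forces the Levi $L_J$ of the parabolic $P_J^I$ of $L$ attached to the set $J$ of (3)---the simple roots of $L$ that are real or imaginary relative to $v_0$, together with the complex ones whose $\theta$-image is again simple relative to $v_0$---to be $\tilde\theta$-stable; intuitively $J$ picks out exactly the simple directions of $L$ along which $G^{\tilde\theta}$ already meets the full rank-one subgroup, so the fibre cannot shrink there. Granting $\tilde\theta$-stability of $L_J$, intersecting the Levi decomposition $P_J^I=L_JN_J^I$ with $P$ and applying $\pi_L$ is a routine unipotent-radical computation yielding $\Mg=L_J^{\tilde\theta}N_J^I=\Mv$; and sphericity of $\Mv$ in $L$ is then the Corollary to Proposition~\ref{MGBParam}, or is immediate since $[\cO_{v_0}^K]_{\sim_I}$ is finite, being a subset of $\KGB$. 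Everything else---the well-definedness and bijectivity of $\psi$, the diagram, the equivariances---is then formal.
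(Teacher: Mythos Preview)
The paper does not contain its own proof of this theorem: it is quoted from \cite{PSY}, so there is nothing in the present text to compare your attempt against directly. That said, your two-step reduction---the Knop fibre correspondence identifying $\mathtt M$-orbits on $gP/B$ with $Q$-orbits for $Q=(g^{-1}\mathtt Mg)\cap P$, followed by the isomorphism $P/B\cong L/(B\cap L)$---is the natural skeleton for such an argument, and the equivariance of $\psi$ for cross actions and Cayley transforms does follow formally once $\psi$ is built by right multiplication.

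The genuine gap is exactly where you locate it, but it is wider than you suggest. Your sentence ``intersecting the Levi decomposition $P_J^I=L_JN_J^I$ with $P$ and applying $\pi_L$ is a routine unipotent-radical computation'' does not parse: $P_J^I$ already sits inside $L\subset P$, so intersecting with $P$ is vacuous, and the object that must actually be computed is $\pi_L(G^{\tilde\theta}\cap P)$, in which $P_J^I$ has not yet appeared. Since $\tilde\theta$ need not stabilise either $L$ or $N$, one cannot decompose $G^{\tilde\theta}\cap P$ along $P=LN$; what is needed is a root-by-root analysis exploiting the consequence of minimality that no $\alpha\in I$ is real or complex-downward relative to $v_0$. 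Two separate facts require proof: first, that $\tilde\theta$ stabilises $L_J$---in particular that for each complex $\alpha\in J$ the simple root $\tilde\theta\alpha$ lands back in $I$ (not merely in $\Pi$) and indeed in $J$---and second, that $\pi_L(G^{\tilde\theta}\cap P)$ is \emph{exactly} $L_J^{\tilde\theta}N_J^I$ rather than some larger or smaller subgroup. Neither step is routine; together they are the substance of the result in \cite{PSY}. Your heuristic description of $J$ as the simple directions along which $G^{\tilde\theta}$ already meets the full rank-one subgroup points in the right direction but is not a substitute for the argument.
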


\begin{remark}
$I$-equivalence permits us to decompose any $M \backslash G / B$ into unions of
smaller mixed subgroup double coset spaces.  In particular, iterating 
$I$-equivalence to simplify computations does not introduce any type of 
subgroup beyond mixed subgroups.  For this reason and since we may develop a 
rich theory for mixed subgroups (parametrizing orbits and understanding 
Bruhat order very explicitly), we choose to use $\Mg$ in bijections even 
though there are other subgroups for which bijections with the orbits in an 
$I$-equivalence class are simpler to prove.
\end{remark}

\begin{remark}
Compare this theorem and Proposition \ref{MGBParam}
with Brion and Helminck's parametrization of an $I$-equivalence 
class in the symmetric case, i.e. the $B$-orbits in $KgP$, in Proposition 4 
of section 1.5 of \cite{BH}.  They  set $\sV_{g}$ to 
be $\lbrace x \in L \cap \tilde{\theta}(L) : x^{-1} \tilde{\theta}( x ) \in 
N_L(T) \cap \tilde{\theta}(L) \rbrace$ and $N_{g} = \lbrace n \in 
N_L(T) : B \cap L \cap \tilde{\theta}(L) \subset n( B \cap L)n^{-1} \rbrace$.  
Then $$L^{\tilde{\theta}} \backslash \sV_{g} / T \times N_{g} / T 
\leftrightarrow [ \cO^K_{g}]_{\sim I}.$$
The first term in the product is a smaller $\KGB$ while Brion-Helminck show 
the second term to be in bijection with orbits on $L / B \cap L$ of the 
semidirect product of the unipotent radical of $\tilde{\theta}(P) \cap L$ 
with $L^{\tilde{\theta}}$.  That subgroup is not usually a mixed subgroup nor 
a parabolic subgroup of $L$.  Brion and Helminck do not impose the condition 
that $\cO_g^K$ is a minimal dimension equivalence class representative.
\end{remark}

Again, we saw that mixed subgroups are spherical subgroups.
Thus:
\begin{corollary} \label{KGPUniqueMaximal}
Each $I$-equivalence class of orbits has a unique orbit maximal with respect 
to Bruhat order.  Thus each $P$-orbit $KvP$ contains a unique dense $B$-orbit.
\end{corollary}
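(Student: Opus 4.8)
The plan is to deduce both halves of the statement from the theory of mixed subgroups packaged in Theorem~\ref{IEquiv}, together with the corollary just established that every mixed subgroup is spherical. First I would fix a $P$-orbit and write it as $KvP$, choosing the representative $v = v_0$ to have minimal dimension in its $I$-equivalence class, so that the set of $B$-orbits contained in $KvP$ is exactly
$$[\cO_v^K]_{\sim_I} \;=\; K\backslash\pi_I^{-1}(\pi_I(\cO_v)),$$
and Theorem~\ref{IEquiv} supplies a mixed subgroup $\Mv = L_J^{\tilde\theta}N_J^I$ of $L$ together with a bijection
$$\psi:\ \Mv\backslash L/(B\cap L)\ \longrightarrow\ [\cO_v^K]_{\sim_I}$$
that fits into the commutative diagram of that theorem and intertwines cross actions and Cayley transforms on the two sides.

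Next I would run the sphericity argument on the Levi side. Since $\Mv$ is a mixed subgroup of $L$, it is a spherical subgroup of $L$ by the Corollary that mixed subgroups are spherical; hence $\Mv\backslash L/(B\cap L)$ has a unique open dense orbit, and because $L/(B\cap L)$ is a projective variety this open orbit is precisely the unique $\preceq^{\Mv}$-maximal element. I would then transport this maximum across $\psi$. The cleanest route is topological: by the commutative diagram of Theorem~\ref{IEquiv} the preimage in $L$ of the open $\Mv$-orbit is open dense in $L$, and pushing it through the natural maps shows that $\psi$ of that orbit is a $K$-orbit whose closure contains $\pi_I^{-1}(\pi_I(\cO_v))$, i.e. it is open dense in the (locally closed, $K$-stable) set $\pi_I^{-1}(\pi_I(\cO_v))$. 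Since $\psi$ also intertwines the dimension-raising cross actions and Cayley transforms, which by Theorem~\ref{KGBCrossCayley} and its analogue for mixed subgroups generate the simple Bruhat relations, one also sees that $\psi$ carries reduced decompositions and subexpressions across, so by the subexpression property $\psi$ identifies the Bruhat order on $\Mv\backslash L/(B\cap L)$ with the order on $[\cO_v^K]_{\sim_I}$ induced from $\preceq^K$; either way the $I$-equivalence class acquires a unique maximal element, namely $\psi$ of the open $\Mv$-orbit.

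Finally I would translate ``maximal'' into ``dense''. If $\cO$ is that maximal $B$-orbit, then every $B$-orbit $\cO'$ in $KvP$ satisfies $\cO' \preceq^K \cO$, hence lies in $\overline{\cO}$; since the union of the $B$-orbits in $KvP$ is all of $\pi_I^{-1}(\pi_I(\cO_v))$, the closed $K$-stable set $\overline{\cO}$ contains $\pi_I^{-1}(\pi_I(\cO_v))$, so $\cO$ is dense in $KvP$, and it is the only orbit with this property because a finite poset has at most one maximum. The hard part, and the one step where care is genuinely needed, is exactly this matching of ``Bruhat-maximal in the $I$-equivalence class'' with ``topologically dense in the $P$-orbit'': the class $[\cO_v^K]_{\sim_I}$ is not closed in $G/B$, so one must check that the order $\psi$ transports really is the restriction of the global $\preceq^K$ and that its top element is honestly dense in the non-closed locus $\pi_I^{-1}(\pi_I(\cO_v))$ — this is where the $K$-stability of $\pi_I^{-1}(\pi_I(\cO_v))$ (and, if one unwinds Theorem~\ref{IEquiv}, the uniform dimension shift between $K$-orbits on $G/B$ and $(K\cap P_{v_0})$-orbits on the sub-flag-variety $v_0P/B$) does the work. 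Everything else is a formal consequence of sphericity and of the results of Section~\ref{Reduction}.
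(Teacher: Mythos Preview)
Your proposal is correct and follows essentially the same route as the paper: the paper deduces the corollary in one line from the fact that mixed subgroups are spherical together with the bijection $\psi$ of Theorem~\ref{IEquiv} (and the earlier remark that this bijection respects Bruhat order via the intertwining of cross actions and Cayley transforms). You have simply spelled out in more detail the two points the paper leaves implicit---that $\psi$ carries the unique open $\Mv$-orbit to the Bruhat-maximal (equivalently, dense) $K$-orbit in the locally closed set $\pi_I^{-1}\pi_I(\cO_v)$, and that maximality in the restricted order really does mean density in $KvP$---but the underlying argument is the same.
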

\begin{remark}
This is equivalent to Proposition 2 of section 1.2 of \cite{BH}.
\end{remark}

\subsection{Parametrizing $\KGP$}
There is a simple combinatorial parametrization of the unique dense orbits 
in each $I$-equivalence class, and hence of $\KGP$.
\begin{theorem} \label{KGPParam}
Let $I \subset \Pi$ correspond to the standard parabolic $P$.
Then the double coset space $\KGP$ is in bijection with $V_P$ where
\begin{eqnarray*}
V_P &:=& \lbrace v \in V : \text{ for every } \alpha \in I, 
w \theta \alpha < 0 \text{ where } w = v^{-1} \theta(v)T \rbrace \\
&=& \lbrace v \in V : \text{ for every } \alpha \in I, m(s_\alpha) v = v 
\rbrace.
\end{eqnarray*}
In other words, $\KGP$ is in bijection with the $I$-maximal elements of 
$V$.
\end{theorem}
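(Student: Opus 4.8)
The statement has two halves --- that $\KGP$ is in bijection with the displayed set $V_P$, and that the two displayed descriptions of $V_P$ coincide --- and the plan is to route both through the single intermediate notion ``$v$ is $\preceq^K$-maximal in its $I$-equivalence class.'' First I would record the bijection: the projection $\pi_I$ identifies $K$-orbits on $G/P$ with the $I$-equivalence classes of $K$-orbits on $G/B$, so $\KGP \leftrightarrow \{[\cO_v]_{\sim_I} : v \in V\}$, and by Corollary \ref{KGPUniqueMaximal} each such class has a unique $\preceq^K$-maximal element, i.e. each $KvP$ contains a unique dense $B$-orbit. Sending a class to that element gives a bijection of $\KGP$ with $\{v \in V : \cO_v \text{ is dense in } \pi_I^{-1}\pi_I(\cO_v) = KvP/B\}$; identifying this set with each of the two displayed sets is then the whole task.

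The substantive combinatorial step is to show $\cO_v$ is dense in $KvP/B$ if and only if $m(s_\alpha) v = v$ for every $\alpha \in I$. For the easy direction, if $m(s_\alpha) v = v' \neq v$ with $\alpha \in I$, then $\cO_v \preceq^K_\alpha \cO_{v'}$ and $\cO_{v'} \subseteq KvP_\alpha / B \subseteq KvP/B$, so $\cO_{v'} \sim_I \cO_v$ lies strictly above $\cO_v$ and $v$ is not maximal in its class. Conversely, assume $m(s_\alpha)v = v$ for all $\alpha \in I$. Write $P = P_I$ as a product $P_{\alpha_1} \cdots P_{\alpha_m}$ of rank-one parabolics with $\alpha_j \in I$, so that $KvP/B = \pi_{\alpha_m}^{-1}\pi_{\alpha_m}(\cdots \pi_{\alpha_1}^{-1}\pi_{\alpha_1}(\cO_v)\cdots)$. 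Since $m(s_{\alpha_j})v = v$ says precisely that $\cO_v$ is dense in the irreducible variety $\pi_{\alpha_j}^{-1}\pi_{\alpha_j}(\cO_v)$, an induction on $j$ that uses Lemma \ref{TopologyLemma} to commute closure past each $\pi_{\alpha_j}^{-1}\pi_{\alpha_j}$ yields $\overline{\cO_v} \supseteq KvP/B$; alternatively one invokes the Richardson--Springer fact that $m(w_0^{W_L})v$ is the dense $B$-orbit in $KvP$. This already establishes the second displayed description of $V_P$.

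It remains to translate $m(s_\alpha)v = v$ into roots. By the monoid definition $m(s_\alpha)v \neq v$ exactly when there is $v'$ with $\cO_v \preceq^K_\alpha \cO_{v'}$, which by Theorem \ref{KGBBruhat} occurs exactly when $w\theta\alpha > 0$ and $\g_{\alpha_v} \not\subseteq \mfk$; and $\g_{\alpha_v} \subseteq \mfk$ forces $\alpha$ imaginary relative to $v$, i.e. $w\theta\alpha = \alpha > 0$. Hence $m(s_\alpha)v = v$ if and only if $w\theta\alpha < 0$ or $\alpha$ is compact imaginary relative to $v$, so $\{v : w\theta\alpha < 0\ \forall \alpha \in I\} \subseteq \{v : m(s_\alpha)v = v\ \forall \alpha \in I\}$ for free, and equality of the two sets is exactly the assertion that an $I$-maximal $v$ admits no compact imaginary root in $I$. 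This is the step I expect to be the real obstacle. I would prove it through Theorem \ref{IEquiv}: an $I$-maximal $v$ corresponds, compatibly with cross actions, Cayley transforms, and Bruhat order, to the open orbit of the mixed (hence spherical) subgroup $\Mv = L_J^{\tilde\theta} N_J^I$ on the flag variety of $L$, and one argues that such an open orbit carries no compact imaginary simple root --- a compact imaginary $\alpha$ is inert both for the cross action and for the fibration $\pi_\alpha$, so one must extract a contradiction from the explicit structure of $\Mv$ (and the description of $J$) rather than from dimension counts alone. Combining this with the preceding two steps gives $V_P = \{v : w\theta\alpha < 0\ \forall \alpha \in I\} = \{v : m(s_\alpha)v = v\ \forall \alpha \in I\}$ and, via the bijection of the first paragraph, completes the proof.
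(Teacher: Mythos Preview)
Your overall strategy---parametrize $\KGP$ by $I$-maximal elements via Corollary~\ref{KGPUniqueMaximal}, then identify $I$-maximality with the monoid condition $m(s_\alpha)v=v$ for all $\alpha\in I$, and finally translate this into roots via Theorem~\ref{KGBBruhat}---is exactly the paper's (one-line) proof, and your first two paragraphs carry it out correctly and in more detail than the paper does.

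The difficulty you isolate in the third paragraph is real, but your proposed resolution cannot succeed. The claim that an $I$-maximal $v$ admits no compact imaginary $\alpha\in I$ is false in general. Take $G=G_1\times G_2$ with $\theta$ an order-two involution on $G_1$ and the identity on $G_2$ (so $\theta$ still has order two on $G$), and let $\alpha\in I$ be a simple root of the $G_2$ factor. For every $v=(v_1,v_2)\in\sV$ one has $w=(w_1,e)$, hence $w\theta\alpha=\alpha>0$, while $\g_{\alpha_v}=\Ad(v_2)\g_\alpha\subset\g_2\subset\mfk$; thus $\alpha$ is compact imaginary relative to \emph{every} $v$, including every $I$-maximal one. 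In this situation the first displayed set $\{v:w\theta\alpha<0\ \forall\,\alpha\in I\}$ is strictly smaller than the second (indeed empty if $I\subset\Pi_{G_2}$), so the two descriptions of $V_P$ do not coincide. The argument you sketch through Theorem~\ref{IEquiv} cannot rescue this: the mixed subgroup $\Mv$ can equal $L$ itself, in which case its open orbit on $L/(B\cap L)$ is all of $L/(B\cap L)$ and every simple root in $I$ is compact imaginary for it.

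This is not a defect in your reasoning but an imprecision in the theorem as stated; the paper's proof simply glosses over the compact imaginary case. What is actually true, and what your first two paragraphs prove, is that $\KGP$ is in bijection with the second displayed set $\{v:m(s_\alpha)v=v\ \forall\,\alpha\in I\}$, and by Theorem~\ref{KGBBruhat} this equals $\{v:\text{for every }\alpha\in I,\ w\theta\alpha<0\text{ or }\g_{\alpha_v}\subset\mfk\}$. The first displayed description is correct only under an additional hypothesis ruling out compact imaginary roots in $I$ for maximal orbits (for instance, that no simple ideal of $\g$ lies in $\mfk$).
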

\begin{proof}
This follows immediately from the proposition and the corollary above and our 
characterization of Bruhat order for $\KGB$.
\end{proof}

\begin{remark}
In comparison, $\PGB$ is in bijection with $W^I := \lbrace w \in 
W_G : w \alpha > 0 \text{ for every } \alpha \in I \rbrace$, the $P$-minimal 
elements of $W_G$.  As discussed,
$\PGB$ is in bijection with the unique maximal length coset representatives 
as well, giving us a parametrization analogous to our parametrization of 
$\KGP$.  We may think of $\KGP$ as the $P$-maximal elements of $\KGB$.
\end{remark}

\begin{proposition}\label{KGPKGBEquiv} (cf. Proposition \ref{PGBBGBEquiv})
Let $u, v \in V_P$.  Then
$$ KuP \preceq KvP \iff KuB \preceq KvB.$$
\end{proposition}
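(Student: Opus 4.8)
The plan is to mimic the proof of Proposition~\ref{KGPClosure} (equivalence of closure order on $\KGP$ with the order induced from $\KGB$), combined with the fact that, for $u, v \in V_P$, the orbits $KuB$ and $KvB$ are precisely the unique dense $B$-orbits in $KuP$ and $KvP$ respectively (Corollary~\ref{KGPUniqueMaximal} together with Theorem~\ref{KGPParam}). The forward direction is the easy one: if $KuB \preceq^K KvB$, then $KuB \subset \overline{KvB} \subset \overline{KvP}$, and applying $\pi_I$ (which is continuous and whose image of a closure is the closure of the image, by Lemma~\ref{TopologyLemma}(1)) gives $KuP = \pi_I(KuB) \subset \overline{\pi_I(KvB)} = \overline{KvP}$, i.e.\ $KuP \preceq KvP$.

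For the reverse direction, suppose $KuP \preceq KvP$, i.e.\ $KuP \subset \overline{KvP}$. Pull back along $\pi_I$: since $\overline{KvP} = \pi_I(\overline{KvB})$, Lemma~\ref{TopologyLemma}(3) gives $\pi_I^{-1}(\overline{KvP}) = \overline{\pi_I^{-1}\pi_I(KvB)}$. Now $KvB$, being the unique dense $B$-orbit in the $I$-equivalence class $\pi_I^{-1}\pi_I(KvB) = KvP$, is dense in $\pi_I^{-1}\pi_I(KvB)$, so $\overline{\pi_I^{-1}\pi_I(KvB)} = \overline{KvB}$. Meanwhile $KuB \subset \pi_I^{-1}\pi_I(KuB) = \pi_I^{-1}(KuP) \subset \pi_I^{-1}(\overline{KvP})$. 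Combining, $KuB \subset \overline{KvB}$, which is exactly $KuB \preceq^K KvB$. So the whole argument is a pair of applications of Lemma~\ref{TopologyLemma} plus the density statement, with the subtlety that $u, v \in V_P$ is exactly what guarantees $KuB$ and $KvB$ are the dense orbits in their $P$-orbits, which is what makes the density rewriting legitimate.

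The main obstacle — really the only nontrivial point — is justifying that $KuB$ is dense in $\pi_I^{-1}\pi_I(KuB)$ and $KvB$ is dense in $\pi_I^{-1}\pi_I(KvB)$. This is where membership in $V_P$ is used: by Theorem~\ref{KGPParam}, $v \in V_P$ means $m(s_\alpha)v = v$ for every $\alpha \in I$, equivalently $\cO_v$ is $I$-maximal, and by Corollary~\ref{KGPUniqueMaximal} the unique maximal (hence dense) $B$-orbit of the $I$-equivalence class $KvP$ is then $\cO_v$ itself. One should also note for the reverse direction that, a priori, $KuP \preceq KvP$ only asserts the existence of \emph{some} representatives with $Ku_0B \preceq Kv_0B$; but since $KuB$ is the unique dense orbit of $KuP$ it satisfies $Ku_0B \preceq KuB$ for every orbit $Ku_0B$ in $KuP$, and similarly $KvB$ dominates everything in $KvP$, so passing to the distinguished representatives $u, v \in V_P$ loses nothing. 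With those two observations in place the proof is just the topological chase above.
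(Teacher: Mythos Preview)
Your proof is correct and follows essentially the same approach as the paper's: both directions are handled via Lemma~\ref{TopologyLemma} together with the fact that $v \in V_P$ makes $KvB$ the unique dense $B$-orbit in $\pi_I^{-1}(KvP)$, so $\overline{\pi_I^{-1}(KvP)} = \overline{KvB}$. Your write-up is simply more expansive (and note that what you call the ``forward'' direction is the paper's $\Leftarrow$, since the statement reads $KuP \preceq KvP \iff KuB \preceq KvB$), but the mathematical content is identical.
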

\begin{proof}
$\Leftarrow$:  This is clear from Lemma \ref{TopologyLemma}. \\
$\Rightarrow$:  Since $KvB$ is dense in $\pi_I^{-1}( KvP )$,
$ KuB \subset \pi_I^{-1}( K uP ) \subset \pi_I^{-1}\overline{KvP} \subset 
\overline{KvB}.$
\end{proof}

\begin{remark}
This short topological proof works for $\PGB$ as well.
\end{remark}

\subsection{Behaviour of Simple Relations: Descent of the Monoidal Action}
Since Bruhat order for $\KGP$ is induced from Bruhat order on $\KGB$, which can 
be described using simple relations, one concludes that Bruhat order for $\KGP$ 
can be described using simple relations as well.  However, the absence of a 
Borel subgroup among the two subgroups with respect to which we take double 
cosets complicates matters somewhat, obstructing the possibility of making a 
natural definition for $\xrightarrow{\alpha}$ consistent among all coset 
representatives.

\begin{proposition}
\begin{enumerate}
\item If $\alpha \in I$, then $\dot{s}_\alpha, z_\alpha \in L \subset P$; thus
$$\pi_I( v ) = \pi_I( m( s_\alpha ) v )\quad \text{for all } v \in V.$$
\item If $\alpha \in \Pi \setminus I$, 
$$\pi_I( v ) \neq \pi_I( m( s_\alpha ) v) \iff v \neq m(s_\alpha) v.$$
\end{enumerate}
\end{proposition}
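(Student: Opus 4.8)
The plan is to prove each statement by tracking how $\pi_I$ interacts with the operations $\dot s_\alpha^{-1}$ (cross action) and $z_\alpha^{-1}$ (Cayley transform) used to compute the monoidal action $m(s_\alpha)$. For part (1), first I would recall that $\dot s_\alpha = \phi_\alpha\left(\begin{smallmatrix}0&-1\\1&0\end{smallmatrix}\right)$ and that $z_\alpha = x_\alpha(-1)x_{-\alpha}(1/2)$ both lie in $G_\alpha$, the subgroup generated by the $\pm\alpha$ root subgroups. When $\alpha\in I$, the Levi $L$ contains $T$ together with all root subgroups for roots in $\bbZ I\cap\Delta$, so in particular $G_\alpha\subseteq L\subseteq P$. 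Hence for any $v$, whichever of $v\dot s_\alpha^{-1}$ or $vz_\alpha^{-1}$ is the chosen representative for $m(s_\alpha)v$, it differs from $\dot v$ by right multiplication by an element of $P$, so $\dot v B$ and the representative of $m(s_\alpha)v$ lie in the same coset of $P$; applying $\pi_I$ (which is constant on $gP$) gives $\pi_I(v)=\pi_I(m(s_\alpha)v)$. The case $m(s_\alpha)v=v$ is trivial. This settles (1).

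For part (2), fix $\alpha\in\Pi\setminus I$. The $(\Leftarrow)$ direction of the asserted equivalence is immediate in contrapositive form: if $m(s_\alpha)v=v$ then certainly $\pi_I(v)=\pi_I(m(s_\alpha)v)$. So the substance is $(\Rightarrow)$, equivalently: if $m(s_\alpha)v\neq v$ then $\pi_I(m(s_\alpha)v)\neq\pi_I(v)$; equivalently $K m(s_\alpha)v B$ and $KvB$ are not $I$-equivalent. By Theorem~\ref{KGBBruhat} (and the cross-action / Cayley-transform dichotomy in Theorem~\ref{KGBCrossCayley}), when $m(s_\alpha)v\neq v$ we have a genuine simple relation $\cO_v \preceq^K_\alpha \cO_{m(s_\alpha)v}$, so $\dim \cO_{m(s_\alpha)v} = \dim\cO_v + 1$; moreover, by the $\bbP^1$-fibration corollary together with Knop's lemma, $\pi_\alpha^{-1}\pi_\alpha(\cO_v)$ then has $\cO_{m(s_\alpha)v}$ as its unique dense orbit and $\cO_v$ as a proper subset. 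The strategy is to compare the two fibrations $\pi_\alpha: G/B\to G/P_\alpha$ and $\pi_I:G/B\to G/P$. Since $\alpha\notin I$, the root subgroup $G_\alpha$ is not contained in $L$; I want to conclude that the $B$-orbit $\cO_v$ is already dense in its own $\pi_I$-fibre intersected with $KvP$ — no, more precisely, that $\cO_v$ and $\cO_{m(s_\alpha)v}$, having different dimensions, cannot lie in the same $I$-equivalence class unless both are dense there, which is impossible since there is a unique dense orbit (Corollary~\ref{KGPUniqueMaximal}) and they are distinct.

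The cleaner route, which I would actually write, is this. Suppose toward a contradiction that $\cO_v\sim_I \cO_{m(s_\alpha)v}$, i.e. both lie in a single $I$-equivalence class $Kv_0P$. Apply Theorem~\ref{IEquiv}: this $I$-equivalence class is in $\psi$-equivariant bijection with $\Mv\backslash L/(B\cap L)$, and $\psi$ intertwines cross actions and Cayley transforms. Under this bijection, the simple relation $\cO_v\preceq^K_\alpha\cO_{m(s_\alpha)v}$ would pull back to a cross action or Cayley transform on $\Mv\backslash L/(B\cap L)$ associated to $\alpha$; but for that to be a nontrivial operation \emph{inside} $L$, the root $\alpha$ (or rather $\alpha_{v_0}$, up to the book-keeping of the definition of $J$ in Theorem~\ref{IEquiv}) must lie in the root system of $L$, which it does not since $\alpha\in\Pi\setminus I$ and, after the minimal-dimension normalization, the relevant simple roots realizing operations within the $I$-equivalence class are exactly those in $J\subseteq$ (roots of $L$). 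Concretely: $\pi_\alpha$ restricted to a $\pi_I$-fibre $gP/B\cong L/(B\cap L)$ is the $\pi_\alpha$-fibration \emph{of $L$} only when $\alpha\in I$; when $\alpha\notin I$, the $\bbP^1$-fibre of $\pi_\alpha$ through a point of $gP/B$ meets $gP/B$ in a single point, so $\pi_\alpha^{-1}\pi_\alpha(\cO_v)$ cannot be contained in any single $\pi_I$-fibre-worth-of-$B$-orbits, i.e. cannot be contained in $\pi_I^{-1}(KvP)$. Hence $\cO_{m(s_\alpha)v}\subseteq\pi_\alpha^{-1}\pi_\alpha(\cO_v)$ is \emph{not} $I$-equivalent to $\cO_v$, giving $\pi_I(m(s_\alpha)v)\neq\pi_I(v)$.

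The main obstacle I anticipate is making rigorous the claim that a $\pi_\alpha$-fibre ($\bbP^1$) for $\alpha\notin I$ meets each $\pi_I$-fibre in exactly one point — this is essentially the statement that $P_\alpha\not\subseteq gPg^{-1}$ for any $g$, equivalently that $\alpha\notin I$ forces $P_\alpha P = G$-flavored transversality of the two parabolics $P_\alpha$ and $P$ at the relevant flag; it follows from the fact that $\{\text{Borels in } P_\alpha\}\cong\bbP^1$ while $\{\text{Borels in } P\cap P_\alpha\}$ is a single point when $\alpha\notin I$, since $P\cap P_\alpha$ is a parabolic whose Levi has semisimple rank $0$ in the $\alpha$-direction. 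I would dispatch this with a direct root-subgroup computation in $P_\alpha$ rather than appealing to a black box. Everything else is bookkeeping with the bijections already established in Theorem~\ref{IEquiv} and the description of simple relations in Theorem~\ref{KGBBruhat}.
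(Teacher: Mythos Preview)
Your plan for part (1) is fine and matches what the paper does: the proposition statement itself records that $\dot s_\alpha, z_\alpha \in L$ for $\alpha \in I$, and the proof in the paper is a one-line appeal to Theorem~\ref{IEquiv}. For part (2) you also correctly isolate Theorem~\ref{IEquiv} as the engine, which is exactly the paper's approach.

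Where your proposal goes wrong is in the ``concrete'' transversality argument you offer as a fallback. You correctly verify that for $\alpha \notin I$ a $\pi_\alpha$-fibre $gP_\alpha/B$ meets a $\pi_I$-fibre $gP/B$ in the single point $gB$ (this is just $P_\alpha \cap P = B$). But you then write that therefore $\pi_\alpha^{-1}\pi_\alpha(\cO_v)$ ``cannot be contained in any single $\pi_I$-fibre-worth-of-$B$-orbits, i.e.\ cannot be contained in $\pi_I^{-1}(KvP)$.'' The ``i.e.'' is the gap: $\pi_I^{-1}(KvP) = KvP/B$ is not a single $\pi_I$-fibre but the $K$-saturation of one, a union of many fibres $kvP/B$ as $k$ ranges over $K$. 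Transversality of your $\bbP^1$ with each individual $\pi_I$-fibre only says that $\pi_I$ restricted to that $\bbP^1$ is injective; it does nothing to prevent the image curve $\pi_I(\bbP^1)$ from sitting entirely inside the $K$-orbit $KvP \subset G/P$, and in general $K$-orbits on $G/P$ do contain complete curves. So this argument, as written, does not establish $\cO_{m(s_\alpha)v} \not\sim_I \cO_v$.

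The content you actually need is the structural half of Theorem~\ref{IEquiv}: the bijection $\psi$ identifies the $I$-equivalence class with $\Mv \backslash L / (B\cap L)$ and intertwines cross actions and Cayley transforms, so every simple Bruhat relation \emph{within} the class is realized by some $m(s_\beta)$ with $\beta \in I$. A nontrivial $m(s_\alpha)$ with $\alpha \in \Pi \setminus I$ therefore cannot stay inside the class. Your first, non-``concrete,'' paragraph already says essentially this; just drop the transversality detour rather than presenting it as an implementation of the same idea.
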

\begin{proof}
This follows immediately from Proposition \ref{IEquiv}.
\end{proof}

Thus we may restrict our attention to simple relations in $\KGB$ for $\alpha 
\in \Pi \setminus I$.  

We consider defining cross action to be $s_\alpha \times KvP = K v
\dot{s}_\alpha^{-1} P$.  Since for $w \in 
W_L$,
\begin{eqnarray*}
K v B & \xrightarrow{s_\alpha \times} & Kv \dot{s}_\alpha^{-1} B  \subset K v
\dot{s}_\alpha^{-1} P \\
K vw B & \xrightarrow{s_\alpha \times} & K v w \dot{s}_\alpha^{-1}B \\
K vw B & \xrightarrow{s_{w^{-1}\alpha}\times} & K v \dot{s}_\alpha^{-1} w B 
\subset Kv \dot{s}_\alpha^{-1} P,
\end{eqnarray*}
we see that the cross action does not descend naturally from $\KGB$ to $\KGP$.

\begin{lemma} \label{AlphaLemma}
For $\alpha \in \Pi \setminus I$, $L$ normalizes $N$, so $W_L \alpha \subset 
\Delta( \mfn, \mft )$.
\begin{enumerate}
\item  For any $w \in W_L$, the coefficient of $\alpha$ in the expression of 
$w\alpha$ as a linear combination of simple roots is $1$.
\item  If $\beta \in W_L \alpha$ is a simple root, then $\beta = \alpha$.
\end{enumerate}
\end{lemma}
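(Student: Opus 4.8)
The plan is to exploit the basic fact that for $\alpha \in \Pi \setminus I$ and $w \in W_L$, the element $w$ preserves the set $\Delta^+(\mfl, \mft)$ of positive roots of the Levi and permutes them, while it sends $\alpha$ to another root in $\Delta(\mfn, \mft)$. The first step is to recall that $\Delta(\mfn, \mft)$ is precisely the set of positive roots whose expansion in simple roots has \emph{some} nonzero coefficient on a simple root outside $I$; since $w \in W_L$ only modifies the $I$-coordinates of a weight (it acts trivially on the quotient of the root lattice by $\bbZ I$), the coefficients of $w\alpha$ on the simple roots in $\Pi \setminus I$ are the same as those of $\alpha$ itself. In particular the coefficient of $\alpha$ in $w\alpha$ is $1$, which is statement (1).

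For (1) in detail, I would write $w\alpha = \alpha + \sum_{\gamma \in I} c_\gamma \gamma$ for integers $c_\gamma$ — this is justified because $w - 1$ maps the root lattice into $\bbZ I$ (each simple reflection $s_\gamma$ with $\gamma \in I$ changes a weight by a multiple of $\gamma$, and $W_L$ is generated by such reflections). Reading off the $\alpha$-coefficient of both sides gives the claim. One should also note $w\alpha$ is a positive root (indeed in $\Delta(\mfn,\mft)$): since $W_L$ normalizes $N$, conjugation by $\dot w$ carries $\mfn$ to $\mfn$, so $\Ad(\dot w)$ permutes the root spaces $\g_\beta$ for $\beta \in \Delta(\mfn, \mft)$, whence $w\alpha \in \Delta(\mfn, \mft)$.

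Statement (2) is then immediate: suppose $\beta = w\alpha \in W_L\alpha$ is simple, $\beta \in \Pi$. By (1), $\beta = \alpha + \sum_{\gamma \in I} c_\gamma \gamma$, so the simple root $\beta$ is congruent to $\alpha$ modulo $\bbZ I$; but distinct simple roots are linearly independent, so an equation expressing one simple root $\beta$ as $\alpha$ plus a $\bbZ$-combination of simple roots in $I$ forces all the $c_\gamma = 0$ and $\beta = \alpha$ (using that $\alpha \notin I$, so $\beta$ cannot lie in $I$ either).

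\textbf{Main obstacle.} The only place requiring any care is the assertion that $(w-1)$ maps the root lattice into $\bbZ I$ for $w \in W_L$; everything else is bookkeeping. This follows by induction on the length of $w$ in $W_L$: for a generator $s_\gamma$, $\gamma \in I$, one has $s_\gamma \mu - \mu = -\langle \mu, \gamma^\vee\rangle \gamma \in \bbZ I$, and the general case composes such steps. I do not expect this to present real difficulty, so the proof is essentially a short computation once this lemma is in hand.
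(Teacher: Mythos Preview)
Your argument is correct. The key observation that $(w-1)$ sends the weight lattice into $\bbZ I$ for $w \in W_L$, proved by induction on length in $W_L$, immediately yields $w\alpha = \alpha + \sum_{\gamma \in I} c_\gamma \gamma$; reading off the $\alpha$-coefficient gives (1), and linear independence of $\Pi$ then forces (2).

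The paper itself states this lemma without proof, evidently regarding it as standard. Your write-up supplies exactly the elementary details one would expect: the induction for $(w-1)(\text{root lattice}) \subset \bbZ I$ is the natural argument, and your deduction of (2) from (1) via linear independence of simple roots is clean. One minor remark: in (2) you could shortcut slightly by noting that a simple root $\beta$ has only itself in its support, so the equation $\beta = \alpha + (\text{element of }\bbZ I)$ with $\alpha \notin I$ already forces $\beta = \alpha$ by comparing the $\alpha$-coefficient---which is essentially what you do.
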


\begin{proposition} \label{KGPMonoid}
If $w \in W_L$, $\alpha \in \Pi \setminus I$, and $v \in V_P$, then
$$\cO^P_{m(s_{w\alpha})v} = \cO^P_{m(s_\alpha)v}.$$ 
\end{proposition}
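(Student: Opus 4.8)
The plan is to reduce the statement to a single identity in the Richardson--Springer monoid $M(W)$, handled one simple reflection of $W_L$ at a time.

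First I would record two preliminary facts. Since $\gamma \in I$ forces $\dot s_\gamma, z_\gamma \in L \subset P$, both the cross action and the Cayley transform through such a $\gamma$ preserve every $P$-orbit; hence $\cO^P_{m(s_\gamma)u} = \cO^P_u$ for all $u \in V$ and all $\gamma \in I$ (this is part (1) of the Proposition preceding Lemma~\ref{AlphaLemma}). Moreover, because $v \in V_P$, the description of $V_P$ in Theorem~\ref{KGPParam} gives the stronger statement $m(s_\gamma)v = v$ for every $\gamma \in I$.

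Next I would carry out the reduction. Writing $w = s_{\gamma_r}\cdots s_{\gamma_1}$ with each $\gamma_i \in I$, set $\beta_0 = \alpha$ and $\beta_j = s_{\gamma_j}\beta_{j-1}$; then $\beta_r = w\alpha$ and, by Lemma~\ref{AlphaLemma}, every $\beta_j \in W_L\alpha \subset \Delta(\mfn,\mft)$ is a positive root. So it is enough to prove the one-step claim
$$\cO^P_{m(s_{s_\gamma\beta})v} = \cO^P_{m(s_\beta)v}\qquad(\gamma \in I,\ \beta \in W_L\alpha,\ v \in V_P),$$
and then chain it over $j=1,\dots,r$. Since the claim for $\beta$ and for $s_\gamma\beta$ are literally the same statement, and the case $\langle\beta,\gamma^\vee\rangle = 0$ (where $s_\gamma\beta = \beta$) is trivial, I may assume $\langle\beta,\gamma^\vee\rangle < 0$. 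Under this assumption the key point is the monoid identity
$$m(s_{s_\gamma\beta}) = m(s_\gamma)\,m(s_\beta)\,m(s_\gamma),$$
which I would verify from the multiplication rule $m(x)m(s) = m(xs)$ when $\ell(xs)>\ell(x)$ (and its left-handed mirror): indeed $s_{s_\gamma\beta} = s_\gamma s_\beta s_\gamma$, the root $s_\beta\gamma = \gamma - \langle\gamma,\beta^\vee\rangle\beta$ is positive and $\ne\gamma$, so $\ell(s_\beta s_\gamma) = \ell(s_\beta)+1$ and $\ell(s_\gamma s_\beta s_\gamma) = \ell(s_\beta s_\gamma)+1$, giving the factorization. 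Feeding this into the two preliminary facts,
$$\cO^P_{m(s_{s_\gamma\beta})v} = \cO^P_{m(s_\gamma)m(s_\beta)m(s_\gamma)v} = \cO^P_{m(s_\gamma)m(s_\beta)v} = \cO^P_{m(s_\beta)v},$$
where the middle equality uses $m(s_\gamma)v = v$ and the last uses the $P$-invariance of $m(s_\gamma)$.

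The hard part, and the reason the sign reduction is needed, is that the identity $m(s_{s_\gamma\beta}) = m(s_\gamma)m(s_\beta)m(s_\gamma)$ fails when $\langle\beta,\gamma^\vee\rangle > 0$: there $s_\beta\gamma < 0$, both multiplications collapse, and the right-hand side degenerates to $m(s_\beta)$, whereas $s_{s_\gamma\beta}\ne\pm\beta$. Thus one cannot prove the one-step claim by a single monoid computation; one must use the $\beta\leftrightarrow s_\gamma\beta$ symmetry to always arrange $\langle\beta,\gamma^\vee\rangle\le 0$. Everything else is the routine length bookkeeping indicated above.
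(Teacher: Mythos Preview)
Your proof is correct and follows essentially the same strategy as the paper: factor $m(s_{w\alpha})$ as a product $m(\text{word in }W_L)\cdot m(s_\alpha)\cdot m(\text{word in }W_L)$, use $v\in V_P$ to absorb the right factor, and use that the $W_L$-monoid action preserves $P$-orbits to discard the left factor. The only difference is in how the factorization is obtained: the paper invokes an external lemma (Lemma~5.3.3 of \cite{Y}) asserting that $s_{w\alpha}$ has a palindromic reduced expression $s_1\cdots s_k\,s_\alpha\,s_k\cdots s_1$ with all $s_i\in W_L$, whereas you build this up one simple reflection $s_\gamma$ at a time and justify reducedness directly via the sign-swap trick $\langle\beta,\gamma^\vee\rangle\leftrightarrow\langle s_\gamma\beta,\gamma^\vee\rangle=-\langle\beta,\gamma^\vee\rangle$. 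Your version is thus a self-contained substitute for the cited lemma; the paper's is shorter but less elementary.
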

\begin{proof}
By Lemma \ref{AlphaLemma} and Lemma 5.3.3 of \cite{Y}, there is a reduced 
expression of 
$s_{w \alpha}$ of the form $s_1 \cdots s_k s_\alpha s_k \cdots s_1$ where the 
$s_i \in W_L$.  Since $v$ is maximal, $m(s_k \cdots s_1 ) v = v$.  Then 
$m( s_{w\alpha} ) v = m( s_1 \cdots s_k ) m( s_\alpha ) m(s_k \cdots s_1) v
= m( s_1 \cdots s_k ) m(s_\alpha)v$.  Since the monoidal action by $W_L$ preserves 
$P$-orbits, the proposition follows.
\end{proof}

\begin{proposition}
Let $v \in V_P$ and $u \in V$ with $u \sim_I v$.
Let $w \in W_L$ be of minimal length such that $v = m(w) u$.
If $\alpha \in \Pi \setminus I$, then $\cO^P_{m(s_\alpha)v} = \cO^P_{
m(s_{w^{-1}\alpha})v} =\cO^P_{ m(s_{w^{-1}\alpha}) u}$.
\end{proposition}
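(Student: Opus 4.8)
The plan is to derive both equalities from Proposition~\ref{KGPMonoid} together with a factorization of $m(s_{w^{-1}\alpha})$ through elements of $W_L$.

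The first equality $\cO^P_{m(s_\alpha)v} = \cO^P_{m(s_{w^{-1}\alpha})v}$ is immediate: since $w\in W_L$ we also have $w^{-1}\in W_L$, and $w^{-1}\alpha\in W_L\alpha\subset\Delta(\mfn,\mft)$ by Lemma~\ref{AlphaLemma}, so this is exactly Proposition~\ref{KGPMonoid} applied with $w^{-1}$ in place of $w$. Thus the real content is the second equality, $\cO^P_{m(s_{w^{-1}\alpha})v} = \cO^P_{m(s_{w^{-1}\alpha})u}$.

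For that, I would first invoke, exactly as in the proof of Proposition~\ref{KGPMonoid}, that Lemma~\ref{AlphaLemma} and Lemma~5.3.3 of \cite{Y} supply a reduced expression $s_{w^{-1}\alpha} = s_1\cdots s_k\, s_\alpha\, s_k\cdots s_1$ with each $s_i$ a simple reflection in $W_L$. Writing $y = s_1\cdots s_k\in W_L$ (so $y\alpha = w^{-1}\alpha$), the displayed word being reduced gives $m(s_{w^{-1}\alpha}) = m(y)\,m(s_\alpha)\,m(y^{-1})$ in $M(W_G)$. The monoidal action of any $W_L$-element carries an orbit to an $I$-equivalent one and hence does not change the associated $P$-orbit, while $v\in V_P$ is fixed by $m(W_L)$; peeling off the outer $m(y^{\pm1})$ and recalling that $u\sim_I v$ (hence $m(y)u\sim_I v$) reduces the claim to
\[
\cO^P_{m(s_\alpha)\bigl(m(y)u\bigr)} = \cO^P_{m(s_\alpha)v},
\]
where $m(y)u$ and $v$ both lie in the $I$-equivalence class $K\backslash\pi_I^{-1}(KvP)$, of which $v$ is the unique maximal element by Corollary~\ref{KGPUniqueMaximal}; in particular $m(y)u\preceq^K v$.

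One inequality is then automatic: the monoidal operators are order-preserving (this follows from property $Z$ and Lemma~\ref{TopologyLemma}), so $m(s_\alpha)\bigl(m(y)u\bigr)\preceq^K m(s_\alpha)v$ and hence $\cO^P_{m(s_\alpha)(m(y)u)}\preceq^P\cO^P_{m(s_\alpha)v}$. The reverse inequality is the crux, and the device I would use is the identity $w\,s_{w^{-1}\alpha} = s_\alpha\,w$ in $W_G$: combined with the idempotent and braid relations of $M(W_G)$, and with the minimality of $w$ invoked to control the Demazure products that appear, this should show that $m(s_{w^{-1}\alpha})\,m(w)$ and $m(s_{w^{-1}\alpha})$ agree after cancelling a left factor lying in $m(W_L)$ -- equivalently, that $m(s_{w^{-1}\alpha})\,m(w) = m(x)\,m(s_{w^{-1}\alpha})$ for some $x\in W_L$. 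Since such a left factor is invisible to $\pi_I$ (Proposition~\ref{IEquiv}, or just the definition of $I$-equivalence), this gives $\cO^P_{m(s_{w^{-1}\alpha})v} = \cO^P_{m(s_{w^{-1}\alpha})m(w)u} = \cO^P_{m(s_{w^{-1}\alpha})u}$, completing the argument. The main obstacle is precisely this last bookkeeping: the $W_L$-element $y$ extracted from the palindromic reduced expression is in general strictly shorter than $w^{-1}$ (exactly when $w^{-1}$ meets $\mathrm{Stab}_{W_G}(\alpha)$ nontrivially inside $W_L$), so one must check by hand, using once more that every $W_L$-translate of $\alpha$ has $\alpha$-coefficient $1$ (Lemma~\ref{AlphaLemma}), that the reflections that occur either commute with $s_\alpha$ or take the palindromic form, so that the $m(W_L)$-factor can indeed be pushed to the left past $m(s_{w^{-1}\alpha})$.
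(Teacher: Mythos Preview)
The paper states this proposition \emph{without proof}, so there is no argument in the paper to compare against. One must therefore judge your proposal on its own merits.

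Your derivation of the first equality is correct and is exactly Proposition~\ref{KGPMonoid} applied with $w^{-1}\in W_L$.

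For the second equality, however, your argument is not complete. You correctly identify that the crux is a relation of the form
\[
m(s_{w^{-1}\alpha})\,m(w)\;=\;m(x)\,m(s_{w^{-1}\alpha})\qquad\text{for some }x\in W_L,
\]
but you do not prove it: the phrases ``this should show'' and ``one must check by hand'' are exactly where a proof is needed and none is supplied. The Weyl-group identity $w\,s_{w^{-1}\alpha}=s_\alpha w$ does \emph{not} transfer to $M(W_G)$ in the required form, because $\ell(w\,s_{w^{-1}\alpha})=\ell(s_\alpha w)=\ell(w)+1$ while $\ell(s_{w^{-1}\alpha})$ is in general strictly larger than~$1$; hence $m(w\,s_{w^{-1}\alpha})\neq m(w)\,m(s_{w^{-1}\alpha})$ unless $w^{-1}\alpha=\alpha$. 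Your fallback---using the palindromic reduced expression $s_{w^{-1}\alpha}=y\,s_\alpha\,y^{-1}$ and then attempting to move an $m(W_L)$-factor past $m(s_{w^{-1}\alpha})$---runs into the same obstruction: the element $y\in W_L$ extracted from the palindrome need not equal $w^{-1}$, and the discrepancy $wy\in\mathrm{Stab}_{W_L}(\alpha)$ still has to be commuted through $m(s_\alpha)$, which is precisely what fails in the monoid.

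That this is a genuine subtlety and not mere bookkeeping is underscored by the Remark immediately following the proposition in the paper: it warns that one \emph{cannot} cancel inverses in $M(W_G)$ and that there are rank-two counterexamples to $\cO^P_{m(s_\alpha)v}=\cO^P_{m(s_\alpha)u}$ for $u\sim_I v$. So the minimality hypothesis on $w$ must be used in an essential way at the level of the action on $V$ (not merely inside $M(W_G)$), and your sketch does not explain how.
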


\begin{remark}
It is tempting at this point, but incorrect, to conclude that the monoidal action of $W_G$ 
on $\KGB$ descends naturally to a monoidal action on $\KGP$ as follows:
\begin{itemize}
\item $m(s_{w\alpha}) v = m( w s_\alpha w^{-1} ) m(w) u \stackrel{?}{=} 
m( w ) m(s_\alpha) u$.
\item $\cO^P_{m(s_{w\alpha})v} = \cO^P_{m(s_\alpha)v}$ and 
$\cO^P_{m(w)m(s_\alpha) u} \cO^P_{m(s_\alpha) u}$ so by Proposition 
\ref{KGPMonoid}, $\cO^P_{m(s_\alpha)v} = \cO^P_{m(s_\alpha) u}$.
\end{itemize}
However, we cannot cancel inverses in $M(W_G)$, so the above argument is 
incorrect.  It is easy to find a rank two counterexample for which 
$\cO^P_{m(s_\alpha)v} \neq \cO^P_{m(s_\alpha)u}$.
\end{remark}

\begin{proposition}
If $v \in V_P$, $\alpha$ and $\beta \in \Pi \setminus I$, and $\alpha \neq 
\beta$ with $v \xrightarrow{\alpha} m(s_\alpha)v$ and $v \xrightarrow{\beta}
m(s_\beta) v$, then 
$$\cO^P_{m(s_\alpha) v} \neq \cO^P_{m(s_\beta) v}.$$
\end{proposition}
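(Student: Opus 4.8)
The plan is to argue by contradiction. Write $u_\alpha = m(s_\alpha)v$ and $u_\beta = m(s_\beta)v$, and suppose $\cO^P_{u_\alpha} = \cO^P_{u_\beta}$, so that $u_\alpha$ and $u_\beta$ lie in a common $I$-equivalence class $C$; let $z \in V_P$ be the unique dense ($=$ maximal) $B$-orbit of $C$ (Corollary \ref{KGPUniqueMaximal}). I will deduce that $s_\alpha s_\beta \in W_L$; this is impossible, since for $\alpha \neq \beta$ the length-two element $s_\alpha s_\beta$ has support $\{s_\alpha, s_\beta\}$, which would then be contained in $\{s_\gamma : \gamma \in I\}$, contradicting $\alpha,\beta \in \Pi\setminus I$.

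The first step is to climb monoidally from $u_\alpha$ to $z$ inside $C$. By Theorem \ref{IEquiv}, $C$ is identified --- compatibly with Bruhat order and the simple relations, hence with the monoidal action of the generators $m(s_\gamma)$, $\gamma \in I$ --- with $\mathtt{M}\backslash L/(B\cap L)$ for a mixed (hence spherical) subgroup $\mathtt{M}$ of $L$, the relevant simple reflections being exactly $\{s_\gamma : \gamma\in I\}$ and $z$ corresponding to the dense orbit. In a spherical double coset space the dense orbit is the \emph{only} orbit fixed by every monoidal generator: if $m(s_\gamma)\cO = \cO$ for all $\gamma \in I$, then $\cO$ is dense in each $\pi_\gamma^{-1}\pi_\gamma(\cO)$, so by Lemma \ref{TopologyLemma}(3) $\overline{\cO}$ is $\pi_\gamma$-saturated for every $\gamma\in I$, hence stable under all minimal parabolics of $L$, hence all of $L/(B\cap L)$. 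Consequently, applying dimension-raising moves $m(s_\gamma)$, $\gamma\in I$, repeatedly starting from $u_\alpha$ reaches $z$ after exactly $\dim\cO_z - \dim\cO_{u_\alpha} = \dim\cO_z - \dim\cO_v - 1$ effective steps; choosing $w_\alpha \in W_L$ of minimal length with $m(w_\alpha)u_\alpha = z$ then gives $\ell(w_\alpha) \leq \dim\cO_z - \dim\cO_v - 1$.

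Next I would package this into a single word. By Lemma \ref{AlphaLemma}, $w_\alpha\alpha \in \Delta(\mfn,\mft)\subset\Delta^+$, so $\ell(w_\alpha s_\alpha) = \ell(w_\alpha)+1$, whence $m(w_\alpha s_\alpha) = m(w_\alpha)m(s_\alpha)$ and $m(w_\alpha s_\alpha)v = m(w_\alpha)u_\alpha = z$ with $\ell(w_\alpha s_\alpha)\leq\dim\cO_z-\dim\cO_v$. The identical construction for $\beta$ yields $m(w_\beta s_\beta)v = z$ with $\ell(w_\beta s_\beta)\leq\dim\cO_z-\dim\cO_v$. On the other hand, any monoidal word taking $v$ to $z$ has length at least $\dim\cO_z-\dim\cO_v$, since each step raises the dimension by at most one; so the minimal-length $w^\ast\in W$ with $m(w^\ast)v = z$ has $\ell(w^\ast)\geq\dim\cO_z-\dim\cO_v$. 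Combining, $\ell(w^\ast) = \ell(w_\alpha s_\alpha) = \ell(w_\beta s_\beta) = \dim\cO_z-\dim\cO_v$, so $w_\alpha s_\alpha$ and $w_\beta s_\beta$ are both minimal-length elements of $W$ sending $v$ to $z$ monoidally. Corollary \ref{KGBWUnique} then forces $w_\alpha s_\alpha = w_\beta s_\beta$, so $s_\alpha s_\beta = w_\alpha^{-1}w_\beta \in W_L$, the contradiction sought.

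I expect the main obstacle to be the first step: establishing that $z$ is reached from $u_\alpha$ by dimension-raising monoidal moves using only simple reflections from $I$ --- equivalently, that $z$ is the unique orbit of $C$ fixed by all $m(s_\gamma)$, $\gamma\in I$. This is where the identification of $C$ with the mixed-subgroup space $\mathtt{M}\backslash L/(B\cap L)$ (Theorem \ref{IEquiv}) and the $\pi_\gamma$-saturation argument built on Lemma \ref{TopologyLemma} do the real work. The remaining steps --- checking that $w_\alpha s_\alpha$ is genuinely of minimal length, and the support argument for $s_\alpha s_\beta$ --- are routine.
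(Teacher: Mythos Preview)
Your proof is correct and follows essentially the same route as the paper: argue by contradiction, lift $m(s_\alpha)v$ and $m(s_\beta)v$ to the common $P$-maximal element $z$ via monoidal moves in $W_L$, factor to get $m(w_\alpha s_\alpha)v = m(w_\beta s_\beta)v = z$, invoke Corollary~\ref{KGBWUnique} to conclude $w_\alpha s_\alpha = w_\beta s_\beta$, and derive $s_\alpha s_\beta \in W_L$, a contradiction. You supply more justification than the paper does---in particular for the existence of the climb to $z$ inside the $I$-equivalence class, for the factorization $m(w_\alpha s_\alpha)=m(w_\alpha)m(s_\alpha)$ via Lemma~\ref{AlphaLemma}, and for why $w_\alpha s_\alpha$ and $w_\beta s_\beta$ are genuinely of minimal length---but the skeleton is identical.
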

\begin{proof}
Assume by contradiction that $m(s_\alpha)v$ and $m(s_\beta)v$ belong to the 
same $P$-orbit.  Then there exist minimal length elements $w_\alpha, w_\beta 
\in W_L$ such that 
$m(w_\alpha s_\alpha)v = m(w_\alpha) m(s_\alpha) v = m(w_\beta ) m(s_\beta) v 
= m( w_\beta s_\beta ) v \in V^P$.
Then by Corollary \ref{KGBWUnique}, $w_\alpha s_\alpha = w_\beta s_\beta$
which implies that $s_\alpha s_\beta = 
w_\alpha^{-1} w_\beta \in W_L$, giving $s_\alpha = 
s_\beta$--contradiction.
\end{proof}

\section{Conclusion}
It would be interesting to apply the simplifications of Bruhat order to the 
study of Kazhdan-Lusztig-Vogan polynomials.  The theory of parabolic 
Kazhdan-Lusztig polynomials appears the most likely to benefit from the 
simplifications.

Another topic for future consideration is to further explore the philosophy 
of proving results for $\PGB$ and for $\KGB$ by reducing to $\BGB$ using our 
analogies for simple relations.  For example, can it be applied to 
develop a better understanding of the exchange 
property and the deletion condition for $\KGB$?

Can the theory for $\KGB$ be simplified by using the Tits group?

Can the theories for $\KGP$ and $\PGB$ be made more similar by recasting 
results for $\PGB$ using maximal length representatives rather than 
minimal length representatives?

The reader will find more material on Bruhat order in \cite{PSY}.  In 
particular, it contains a description of Bruhat order for mixed subgroups 
(for which parabolic subgroups and symmetric subgroups are a special case) 
and for situations where one of the subgroups with respect to which we 
take double cosets is twisted by conjugation.
The descriptions of Bruhat order through pullbacks 
of roots in particular carries over to the twisted case very naturally.

\nocite{CC}
\nocite{H}
\nocite{K}
\nocite{S2}
\nocite{H3}
\nocite{H4}
\nocite{A}
\nocite{D}
\nocite{LV}
\nocite{V6}
\nocite{BBr}
\nocite{RW}

\bibliographystyle{alpha}
\bibliography{bruhat}
\end{document}